\providecommand\@dotsep{5}
\def\listtodoname{List of Todos}
\def\listoftodos{\@starttoc{tdo}\listtodoname}
\newtheorem{theorem}{Theorem}[section]
\newtheorem{proposition}[theorem]{Proposition}
\newtheorem{corollary}[theorem]{Corollary}
\newtheorem{lemma}[theorem]{Lemma}
  \theoremstyle{definition}
\newtheorem{definition}[theorem]{Definition}
\newtheorem{remark}[theorem]{Remark}
\newcommand{\dbE}{\mathbb{E}}
\newcommand{\dbH}{{\mathbb H}}
\newcommand{\dbR}{\mathbb{R}}
\newcommand{\dbZ}{\mathbb{Z}}
\newcommand{\calA}{{\mathcal A}}
\newcommand{\calF}{{\mathcal F}}
\newcommand{\calH}{{\mathcal H}}
\newcommand{\calO}{{\mathcal O}}
\newcommand{\vcyc}{V\text{\tiny{\textit{CYC}}}}
\newcommand{\fin}{F\text{\tiny{\textit{IN}}}}
\newcommand{\nbeq}{\begin{equation}}
\newcommand{\neeq}{\end{equation}}
\newcommand{\beq}{\begin{equation*}}
\newcommand{\eeq}{\end{equation*}}
\newcommand{\gdvcgamma}{\gdvc(\Gamma)}
\newcommand{\mbn}{\mathbb{N}}
\newcommand{\mbz}{\mathbb{Z}}
\newcommand{\mbr}{\mathbb{R}}
\newcommand{\mbc}{\mathbb{C}}
\newcommand{\mbe}{\mathbb{E}}
\newcommand{\mbs}{\mathbb{S}}
\newcommand{\cat}[1]{\mathrm{CAT(}#1\mathrm{)}}
\newcommand{\psl}{\mathrm{PSL}_2(\mbr)}
\newcommand{\pslt}{\widetilde{\mathrm{PSL}}_2(\mbr)}
\newcommand{\hyp}{\mathbb{H}^2}
\newcommand{\hypp}{\mathbb{H}^3}
\newcommand{\ihypp}{\mathrm{Isom}(\mathbb{H}^3)}
\newcommand{\nil}{\mathrm{Nil}}
\newcommand{\sol}{\mathrm{Sol}}
\newcommand{\hxr}{\mathbb{H}^2\times\mathbb{E}}
\newcommand{\fmly}{\mathcal{F}}
\newcommand{\mc}[1]{\mathcal{#1}}
\newcommand{\efin}{\underline{E}}
\newcommand{\evc}{\underline{\underline{E}}}
\newcommand{\hvc}{\underline{\underline{\mathrm{H}}}}
\newcommand{\hfin}{\underline{\mathrm{H}}}
\newcommand{\isom}{\mathrm{Isom}}
\DeclareMathOperator{\vrt}{vert}
\DeclareMathOperator{\cd}{cd}
\DeclareMathOperator{\vcd}{vcd}
\DeclareMathOperator{\cdfin}{\underline{cd}}
\DeclareMathOperator{\cdvc}{\underline{\underline{cd}}}
\DeclareMathOperator{\gd}{gd}
\DeclareMathOperator{\gdfin}{\underline{gd}}
\DeclareMathOperator{\gdvc}{\underline{\underline{gd}}}
\DeclareMathOperator{\vertex}{vert}
\DeclareMathOperator{\edge}{edge}
\begin{document}

\title{Virtually cyclic dimension for $3$-manifold groups}

\author{Kyle Joecken}
\address{Department of Mathematics, The Ohio State University, 100 Math Tower,
231 West 18th Avenue,
Columbus, OH 43210-1174, USA}
\email{joecken@gmail.com}

\author{Jean-Fran\c{c}ois Lafont}
\address{Department of Mathematics, The Ohio State University, 100 Math Tower,
231 West 18th Avenue,
Columbus, OH 43210-1174, USA}
\email{jlafont@math.ohio-state.edu}

\author{Luis Jorge S\'anchez Salda\~na}
\address{Department of Mathematics, The Ohio State University, 100 Math Tower,
231 West 18th Avenue,
Columbus, OH 43210-1174, USA}
\email{sanchezsaldana.1@osu.edu}


\date{}



\begin{abstract}
Let  $\Gamma$ be the fundamental group of a connected, closed, orientable $3$-manifold. We explicitly compute its virtually cyclic geometric dimension $\gdvcgamma$. Among the tools we use are the prime and JSJ decompositions of $M$, several push-out type constructions, as well as some Bredon cohomology computations. 
\end{abstract}

\maketitle

\section{Introduction}

Given a group $\Gamma$, we say that a collection of subgroups $\calF$ is called a \emph{family} if it is closed under conjugation and under taking subgroups. We say that a $\Gamma$-CW-complex $X$ is a model for the classifying space $E_\calF \Gamma$ if every isotropy group of $X$ belongs to $\calF$, and $X^H$ is contractible whenever $H$ belongs to $\calF$. Such a model always exists and it is unique up to $\Gamma$-homotopy equivalence. The geometric dimension of $\Gamma$ with respect to the family $\calF$, denoted $\gd_\calF (\Gamma)$, is the minimum dimension $n$ such that $\Gamma$ admits an $n$-dimensional model for $E_\calF \Gamma$.

Classical examples of families are the family that consists only of the trivial subgroup $\{1\}$, and the family $\fin$ of finite subgroups of $\Gamma$. A group is said to be \emph{virtually cyclic} if it contains a cyclic subgroup (finite or infinite) of finite index. We will also consider the family $\vcyc$ of virtually cyclic subgroups of $\Gamma$. These three families are relevant to the Farrell--Jones and the Baum--Connes isomorphism conjectures.

In the present paper we study $\gd_{\vcyc}(\Gamma)$ (also denoted $\gdvcgamma$) when $\Gamma$ is the fundamental group of an orientable, closed, connected $3$-manifold. We call a group \emph{non-elementary} if it is not virtually cyclic. Our main result is the following theorem.

\begin{theorem}\label{main:theorem}
Let $M$ be a connected, closed, oriented $3$-manifold, and let $\Gamma=\pi_1(M)$ be the fundamental group of $M$. Then $\gdvc (\Gamma) \leq 4$.
Moreover, we can classify $\gdvc(\Gamma)$ as follows:

\begin{enumerate}
\item $\gdvcgamma=0$ if and only if $\Gamma$ is virtually cyclic;

\item $\gdvcgamma=2$ if and only if $\Gamma$ is a non-elementary free product of virtually cyclic groups;

\item $\gdvc (\Gamma)=4$ if and only if $\Gamma$ contains a $\mathbb Z^3$ subgroup;

\item $\gdvc (\Gamma)=3$ in all other cases.

\end{enumerate}
\end{theorem}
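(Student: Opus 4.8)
The plan is to prove the upper bound $\gdvc(\Gamma)\le 4$ first, and then treat the four cases of the classification, essentially working through the geometry of $M$ via its prime and JSJ decompositions.

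\medskip

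\noindent\emph{The upper bound.} First I would reduce to the prime pieces. By the prime decomposition, $\Gamma$ is a free product $\Gamma_1 * \cdots * \Gamma_k * F_r$, where each $\Gamma_i$ is the fundamental group of a prime $3$-manifold and $F_r$ is free. Using the standard push-out construction for models of $E_{\vcyc}$ of a free product (build a tree on which $\Gamma$ acts with vertex stabilizers the free factors, glue in models for each $E_{\vcyc}\Gamma_i$), one gets $\gdvc(\Gamma)\le \max\{2, \gdvc(\Gamma_1),\dots,\gdvc(\Gamma_k)\}$, so it suffices to bound $\gdvc$ of a prime (indeed irreducible, after handling $S^2\times S^1$) $3$-manifold group. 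For the geometric (Seifert-fibered or hyperbolic or Sol/Nil/etc.) pieces one uses the known structure: Seifert pieces fiber over a $2$-orbifold, and one builds a model for $E_{\vcyc}$ from a model for $E_{\vcyc}$ of the orbifold group together with the $\mathbb Z$ (or finite) fiber, while hyperbolic pieces have $\gdvc = 3$ by the general bound $\gdvc \le \gdfin + 1$ (and $\gdfin = \vcd = 3$ for closed hyperbolic, or via the CAT(0)/$\delta$-hyperbolic structure). For a general irreducible $M$ with nontrivial JSJ decomposition, $\Gamma$ is the fundamental group of a graph of groups with vertex groups the (Seifert or hyperbolic) pieces and edge groups $\mathbb Z^2$; another push-out/graph-of-groups construction combines models for the vertex groups' $E_{\vcyc}$ (dimension $\le 3$, or $4$ if a $\mathbb Z^3$ appears in a Sol or Euclidean piece) with models for the edge groups $\mathbb Z^2$ (which have $\gdvc(\mathbb Z^2) = 3$), giving the overall bound $4$. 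The one subtlety is that gluing at an edge group costs one dimension, so one must check the edge-group models embed compatibly; this is where the ``several push-out type constructions'' advertised in the abstract do the work.

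\medskip

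\noindent\emph{The classification.} Statement (1) is immediate since $\gd_{\calF}(\Gamma) = 0$ iff $\Gamma \in \calF$. For (3), the ``if'' direction follows from monotonicity-type estimates and the computation $\gdvc(\mathbb Z^3) = 4$ (known: $\cdvc(\mathbb Z^n) = n+1$ for $n \ge 2$, due to results on $E_{\vcyc}$ of virtually polycyclic / crystallographic groups); one checks $\gdvc(\Gamma) \ge \gdvc(\mathbb Z^3) = 4$ using that a model for $E_{\vcyc}\Gamma$ restricts to a model for $E_{\vcyc}(\mathbb Z^3)$ (restriction to subgroups, plus that $\vcyc$-subgroups of $\mathbb Z^3$ are exactly the $\vcyc$-subgroups of $\Gamma$ lying in $\mathbb Z^3$). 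For the ``only if'' direction one must show that if $\Gamma$ has no $\mathbb Z^3$ then $\gdvc(\Gamma)\le 3$: examining the prime/JSJ pieces, a $\mathbb Z^3$ occurs precisely in the Euclidean, Nil, and Sol geometries (and as a periodic-flat piece, or $T^3$ summand), so in the absence of $\mathbb Z^3$ every geometric piece is $S^3$, $S^2\times\mathbb R$, $\mathbb H^3$, $\widetilde{SL_2}$, or $\mathbb H^2\times\mathbb R$, and for each of these (plus the graph-of-groups gluing with $\mathbb Z^2$ edge groups, which only needs $\gdvc \le 3$) one verifies the bound $\gdvc\le 3$ directly. Statement (2): if $\Gamma$ is a non-elementary free product of virtually cyclic groups then the free-product push-out gives $\gdvc \le 2$, and $\gdvc \ge 2$ follows because $\gdvc(\Gamma) = 1$ is impossible (a theorem: no group has $\gdvc = 1$) and $\gdvc(\Gamma) = 0$ would force $\Gamma$ virtually cyclic; conversely one shows that any $\Gamma$ with $\gdvc(\Gamma) = 2$ must be such a free product, by ruling out the geometric and mixed pieces via the Bredon cohomology computations — these would show $\cdvc \ge 3$ for any $3$-manifold group that isn't a free product of virtually cyclic groups (using $H^*_{\vcyc}$ and a suitable coefficient module detecting dimension $3$). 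Case (4) is then the complement: everything not covered by (1),(2),(3) has $\gdvc = 3$, using the upper bound argument to get $\le 3$ and a Bredon-cohomological lower bound $\cdvc \ge 3$.

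\medskip

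\noindent\emph{Main obstacle.} The hard part will be the two lower-bound directions that require the Bredon cohomology computations: showing $\cdvc(\Gamma) \ge 3$ for every $3$-manifold group that is neither virtually cyclic nor a non-elementary free product of virtually cyclic groups (needed for the ``only if'' of (2) and the lower bound in (4)), and sharpening this to $\ge 4$ exactly when $\mathbb Z^3 \le \Gamma$. This requires choosing appropriate Bredon modules over the orbit category $\ovc$ and carrying out the cohomology computation piece-by-piece through the JSJ decomposition, using Mayer–Vietoris sequences adapted to the graph-of-groups structure and controlling what happens at the $\mathbb Z^2$ edge groups; assembling these local computations into a global lower bound, while keeping track of the family-restriction subtleties (the family $\calF[H] = \{K \cap H : K \in \vcyc\}$ on subgroups $H$), is the technical heart of the argument.
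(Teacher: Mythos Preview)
Your overall architecture (prime decomposition, then JSJ, then push-outs) matches the paper's, but there are two genuine gaps and one factual error.

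\textbf{Factual error.} Neither $\nil$ nor $\sol$ manifolds contain a $\mathbb Z^3$ subgroup: the integer Heisenberg group has no rank-$3$ abelian subgroup, and $\mathbb Z^2\rtimes_A\mathbb Z$ with $A$ hyperbolic has trivially intersecting centralizers. Only the $\mathbb E^3$ (crystallographic) pieces contain $\mathbb Z^3$. This does not wreck your argument, since the paper shows $\gdvc=3$ for $\nil$ and $\sol$ pieces anyway (via L\"uck--Weiermann's criterion for poly-$\mathbb Z$ groups, not via $\gdfin+1$), but your stated dichotomy for where $\mathbb Z^3$ lives is wrong.

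\textbf{Missing ingredient: acylindricity.} Your graph-of-groups push-out for the JSJ splitting only produces a model for $E_{\calF}\Gamma$, where $\calF$ is the family of virtually cyclic subgroups that conjugate into a vertex group. To promote this to a model for $\evc\Gamma$ you need to adjoin cells for the virtually cyclic subgroups acting hyperbolically on the Bass--Serre tree, and this step is controlled only when the splitting is \emph{acylindrical}. For the prime decomposition this is trivial (edge groups are trivial), but for the JSJ decomposition it is a genuine theorem (Proposition~\ref{jsj acylindrical} in the paper): one must rule out that a $\mathbb Z^2$ edge group survives through a long path in the tree, and this uses the incompatibility of Seifert fiberings on adjacent pieces together with the structure of parabolic subgroups in hyperbolic pieces. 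You never mention this.

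\textbf{The main obstacle is the opposite of what you say.} You locate the difficulty in the lower bound $\cdvc\ge 3$, but that is the easy direction: any prime manifold that is not modeled on $S^3$ or $S^2\times\mathbb E$ contains a $\mathbb Z^2$ (a JSJ torus, a cusp, or a fiber-plus-base subgroup), and $\gdvc(\mathbb Z^2)=3$ gives the bound by monotonicity. The hard direction is the \emph{upper} bound $\gdvc\le 3$ for a non-geometric prime manifold. The na\"ive graph-of-groups model has dimension $\max\{\gdvc(\text{vertex}),\,\gdvc(\mathbb Z^2)+1\}=4$, and your parenthetical ``which only needs $\gdvc\le 3$'' is precisely where the real work is hidden. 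The paper (Proposition~\ref{prime-cases}) gets down to $3$ by a Bredon cohomology computation showing $H^4_{\calF}(\Gamma;F)=0$ for all $F$: this uses a Mayer--Vietoris sequence for the graph of spaces, and the key point is that the restriction maps $\hvc^3(G_i;F)\to\hvc^3(\mathbb Z^2;F)$ induced by boundary-torus inclusions are \emph{surjective} for hyperbolic and Seifert-over-hyperbolic vertex groups (extracted from the push-out descriptions of their $\evc$), with the twisted $I$-bundle over the Klein bottle handled by observing it is always adjacent to such a piece. So the Bredon computation is used to kill $H^4$, not to produce a nonzero $H^3$.
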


Since we are dealing with $3$-manifold groups, the purely group theoretic description given above also corresponds to the following more geometric characterization 
of the virtually cyclic geometric dimension.

\begin{corollary}\label{main:corollary}
Let $M$ be a connected, closed, oriented $3$-manifold, and let $M=P_1 \# \cdots \# P_k$ be the prime decomposition of $M$. Let $\Gamma=\pi_1(M)$ be the fundamental group of $M$. Then we can classify $\gdvc(\Gamma)$ as follows:

\begin{enumerate}
\item $\gdvcgamma=0$ if and only if $M$ is modeled on $S^3$ or $S^2\times \dbR$;

\item $\gdvcgamma=2$ if and only if  every $P_i$ in the prime decomposition of $M$ is modeled on $S^3$ or $S^2\times \dbE$, and either: (1) $k>2$, or (2) $M=P_1\# P_2$ with $|\pi_1(P_1)|>2$;

\item $\gdvc (\Gamma)=4$ if and only if at least one of the prime components $P_i$ is modeled on $\dbE^3$;

\item $\gdvc (\Gamma)=3$ in all other cases.

\end{enumerate}
\end{corollary}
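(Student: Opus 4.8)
The plan is to deduce Corollary~\ref{main:corollary} directly from Theorem~\ref{main:theorem} by translating each of the four group-theoretic conditions into the language of the prime decomposition and Thurston geometries. Throughout I will use the standard facts about closed orientable $3$-manifolds: a prime manifold is either $S^1\times S^2$ or is irreducible; an irreducible closed orientable $3$-manifold is either aspherical (so $\Gamma$ is torsion-free and infinite) or is a spherical space form (so $\pi_1$ is finite and nontrivial, except for $S^3$); and the eight Thurston geometries, with $S^3$ corresponding exactly to finite $\pi_1$, $S^2\times\dbR$ to $\pi_1$ virtually $\dbZ$ (the only prime models here being $S^1\times S^2$ and $S^1\tilde\times S^2$, and the latter is nonorientable so only $S^1\times S^2$), and $\dbE^3$ corresponding exactly to $\pi_1$ virtually $\dbZ^3$. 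I will also use the Kneser--Milnor theorem that $\Gamma\cong\pi_1(P_1)\ast\cdots\ast\pi_1(P_k)$, where the $S^1\times S^2$ factors contribute free $\dbZ$ factors.

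First I would handle case (1): $\Gamma$ is virtually cyclic if and only if $\Gamma$ is either finite or virtually $\dbZ$. A free product of the $\pi_1(P_i)$ is virtually cyclic precisely when there is at most one factor and it is virtually cyclic; unwinding, this forces $M$ prime with $\pi_1(M)$ finite or virtually $\dbZ$, i.e.\ $M$ modeled on $S^3$ or (via the above remarks) $S^2\times\dbR$. Conversely these manifolds clearly have virtually cyclic fundamental group, giving the equivalence with Theorem~\ref{main:theorem}(1). Next, case (3): by Theorem~\ref{main:theorem}(3) we must show $\Gamma$ contains a $\dbZ^3$ if and only if some $P_i$ is modeled on $\dbE^3$. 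Since a free product contains $\dbZ^3$ if and only if one of its free factors does (a $\dbZ^3$ is freely indecomposable and noncyclic, so by the Kurosh subgroup theorem it is conjugate into a factor), this reduces to: a closed orientable prime $3$-manifold $P$ has $\dbZ^3\le\pi_1(P)$ iff $P$ is modeled on $\dbE^3$. The ``if'' direction is immediate. For ``only if'', a $\dbZ^3$ subgroup forces $P$ to be aspherical (it has infinite torsion-free $\pi_1$ and is not $S^1\times S^2$); among aspherical geometric $3$-manifolds only $\dbE^3$ has $\dbZ^3$ subgroups, and a $\dbZ^3$ in $\pi_1$ of an irreducible $3$-manifold forces the manifold to be geometric (in fact flat) — this is a standard consequence of the geometrization theorem and Scott's work on the Seifert fibered and Sol cases, which I would cite rather than reprove.

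For case (2), by Theorem~\ref{main:theorem}(2) I need: $\Gamma$ is a non-elementary free product of virtually cyclic groups iff every $P_i$ is modeled on $S^3$ or $S^2\times\dbE$ and ($k>2$, or $k=2$ with $|\pi_1(P_1)|>2$). The key observation is that $\pi_1(P_i)$ is virtually cyclic iff $P_i$ is modeled on $S^3$ or $S^2\times\dbR$; since the latter among orientable primes is only $S^1\times S^2$ (contributing a $\dbZ$ factor), ``every $\pi_1(P_i)$ virtually cyclic'' is exactly ``every $P_i$ modeled on $S^3$ or $S^2\times\dbR$''. It remains to check that such a free product $G_1\ast\cdots\ast G_k$ of virtually cyclic groups is non-elementary precisely under the stated numerical conditions: it is virtually cyclic iff $k\le 1$, or $k=2$ and both $G_i$ have order $2$ (the infinite dihedral case $\dbZ/2\ast\dbZ/2$), which is a short exercise with ends of groups / Bass--Serre theory. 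Translating: non-elementary means $k\ge 3$, or $k=2$ with not both factors of order $2$; rewriting $k=2$ with an $S^1\times S^2$ factor (infinite $\pi_1$) also falls under $|\pi_1(P_1)|>2$ after ordering the factors, so the condition matches the corollary's ``$k>2$, or $M=P_1\#P_2$ with $|\pi_1(P_1)|>2$.'' Finally, case (4) is purely formal: it is the complement of (1), (2), (3) on both sides, so once the first three equivalences are established the fourth follows, provided one checks the three geometric conditions are mutually exclusive (they are, since (1) needs one prime factor, (2) needs all factors spherical-or-$S^2\times\dbR$ but not reducing to the elementary cases, and (3) needs a flat factor which is aspherical hence not in the (1)/(2) lists).

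The main obstacle is the geometric input in case (3): showing that a $\dbZ^3$ subgroup of $\pi_1$ of a closed orientable irreducible $3$-manifold forces the flat geometry. This is where geometrization is genuinely used — one must rule out a $\dbZ^3$ inside the fundamental group of a hyperbolic piece, a nontrivial graph manifold, a Seifert piece with hyperbolic base, or a Sol manifold, and conclude the manifold is closed flat. I expect to dispatch this by citing the relevant structure theory (Seifert fibered spaces have $\pi_1$ with no $\dbZ^3$ unless Euclidean; Sol groups contain no $\dbZ^3$; atoroidal hyperbolic pieces contain no $\dbZ^2$ even) rather than reproving it, keeping the corollary's proof short and dictionary-like.
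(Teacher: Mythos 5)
Your overall strategy---deducing the corollary from Theorem \ref{main:theorem} via a dictionary between the group-theoretic conditions and the geometries of the prime factors---is essentially the paper's own route: the paper simply remarks that the arguments proving Theorem \ref{main:theorem} already yield the geometric description, since that proof reads the geometry of each prime factor off Table \ref{tabla} and Proposition \ref{prime-cases}. Your cases (2) and (3) are sound; in (3) you use the Kurosh subgroup theorem plus the cited geometric fact that a closed orientable irreducible $3$-manifold whose fundamental group contains $\dbZ^3$ is flat, whereas the paper bypasses that input by routing through the already-computed equality $\gdvc(\Gamma_i)=4$ for crystallographic prime factors. Both routes work; yours needs one extra (standard, citable) geometric lemma.

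There is, however, a concrete error in your case (1). You assert that ``a free product of the $\pi_1(P_i)$ is virtually cyclic precisely when there is at most one factor,'' and conclude that $\gdvcgamma=0$ forces $M$ to be prime. This is false, and in fact contradicts the correct criterion you yourself state in case (2): $\dbZ/2 \ast \dbZ/2\cong D_\infty$ is virtually cyclic. The relevant manifold is $M=\dbR P^3\#\dbR P^3$, which is closed, orientable, \emph{not} prime, has $\pi_1(M)\cong D_\infty$ (hence $\gdvcgamma=0$), and is modeled on $S^2\times\dbR$ (it is one of the two closed orientable manifolds with this geometry, the other being $S^1\times S^2$). So the forward implication in (1) must also account for the case $k=2$ with both factors of order two; the conclusion of the corollary still holds because $\dbR P^3\#\dbR P^3$ is geometric with model $S^2\times\dbR$, but your argument as written does not cover it. The same oversight affects your mutual-exclusivity check in case (4): ``(1) needs one prime factor'' is not true. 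Exclusivity of (1) and (2) instead follows because the numerical condition in (2) exactly excludes $k\le 1$ and the case $k=2$ with $|\pi_1(P_1)|=|\pi_1(P_2)|=2$. With this repair the proposal is correct and matches the paper's approach.
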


Our main tools for proving Theorem \ref{main:theorem} is the Kneser--Milnor prime decomposition 
and the Jaco--Shalen--Johannson JSJ decomposition of a $3$-manifold, the push-out constructions 
associated to acylindrical splittings from \cite{LO09_2}, the theory of Seifert fibered and hyperbolic manifolds, and some Bredon cohomology
computations.

\vskip 10pt

The paper is organized as follows. In Section \ref{sec-background} we review the notions of geometric 
and cohomological dimensions for families of subgroups and some of the basic notation of Bass--Serre 
theory. Section \ref{sec-3mflds} is devoted to recalling some basics of $3$-manifold theory, such as
the prime decomposition of Kneser--Milnor and the JSJ decomposition of Jaco--Shalen--Johannson. 
In Section \ref{sec-push-out} we state some useful push-out constructions that will help us 
construct new classifying spaces out of old ones. We also relate the geometric dimension of the 
fundamental group of a graph of groups with that of the vertex and edges groups, provided that the 
splitting is acylindrical. This allows us to establish Theorem 
\ref{reduction:irreducible}, which reduces the calculation to the case of prime 
manifolds. We then 
analyze case-by-case the situation when $M$ is a Seifert fibered space (Section \ref{sec-Seifert-fibered}) 
or a hyperbolic manifold (Section \ref{sec-hyperbolic}), with the results of these analyses summarized 
in Tables \ref{tabla2} and \ref{tabla}. In Section \ref{sec-twisted-bundles}, we focus on $3$-manifolds 
whose JSJ decomposition only contains pieces that are Seifert fibered with Euclidean orbifold base -- 
and show that these manifolds are always geometric. 
The main result of Section \ref{sec-reduction} is Theorem \ref{reduction:seifert:hyp}, where for a non-geometric
prime $3$-manifold, we show that the JSJ decomposition gives rise to an acylindrical splitting.  
Section \ref{sec:Bredon} then finishes the computation of the virtually cyclic geometric dimension for all
the prime manifolds that are not geometric. 
Finally, in Section \ref{sec-main-thm}, we bring these results together and prove Theorem \ref{main:theorem}.

\vskip 10pt

\centerline{\bf Acknowledgments}

\vskip 10pt

J.-F.L. was partially supported by the NSF, under grant DMS-1812028.  L.J.S.S. was supported by the 
National Council of Science and Technology (CONACyT) of Mexico via the program ``Apoyo para 
Estancias Posdoctorales en el Extranjero Vinculadas a la Consolidaci\'on de Grupos de Investigaci\'on 
y Fortalecimiento del Posgrado Nacional". The third author also wants to thank the OSU math department 
for its hospitality, and specially the second named author.

\vskip 20pt


\section{Preliminaries}\label{sec-background}

\subsection{Virtually cyclic geometric and cohomological dimension}
Let $\Gamma$ be a discrete group.  A nonempty set $\fmly$ of subgroups of
$\Gamma$ is called a \emph{family} if it
is closed under conjugation and passing to subgroups. We
call a $\Gamma$-CW-complex a \emph{model} for $E_{\fmly}\Gamma$ if for every
$H\leq \Gamma$:
\begin{enumerate}
  \item $H\notin \fmly\Rightarrow X^{H}=\emptyset$ ($X^{H}$ is the
  $H$-fixed subcomplex of $X$);
  \item $H\in\fmly\Rightarrow X^H$ is contractible.
\end{enumerate}
Such models always exist for every discrete group $\Gamma$ and every family of subgroups $\calF$. Moreover, every pair of models for $E_\calF \Gamma$ are $\Gamma$-homotopically equivalent. The geometric dimension $\gd_\calF(\Gamma)$ of $\Gamma$ with respect to the family $\calF$, is the minimum $n$ for which an $n$-dimensional model for $E_\calF \Gamma$ exists.

On the other hand, given $\Gamma$ and $\calF$ we have the so-called restricted orbit category $\calO_\calF \Gamma$, which has as objects the homogeneous $\Gamma$-spaces $\Gamma/H$, $H\in \calF$, and morphisms consisting of  $\Gamma$-maps between them. We define an $\calO_\calF \Gamma$-module to be a functor from $\calO_\calF \Gamma$ to the category of abelian groups, while a morphism between two $\calO_\calF \Gamma$-modules is a natural transformation of the underlying functors. Denote by $\calO_\calF \Gamma$-mod the category of $\calO_\calF \Gamma$-modules, which is an abelian category with enough projectives. Thus we can define a $\Gamma$-cohomology theory for $\Gamma$-spaces $H_\calF^*(-;F)$ for every $\calO_\calF\Gamma$-module $F$.  The
\emph{Bredon cohomological dimension of $G$}---denoted $\cd_{\fmly}(G)$---is the
largest nonnegative $n\in\mbz$ for which the Bredon cohomology group $H^n_{\fmly}(G;F)=H^n_{\fmly}(E_\calF G;F)$
is nontrivial for some $M\in\textrm{Mod-}\mathcal{O}_{\fmly}G$.

In the present work we are mainly concerned with the family $\vcyc$ of virtually cyclic subgroups. A highly related family is the family $\fin$ of finite subgroups. We will denote $E_{\vcyc} \Gamma$ (resp. $\gd_{\vcyc}(\Gamma)$, $\cd_{\vcyc}(\Gamma)$, $H_{\vcyc}^*$)  and $E_{\fin} \Gamma$ (resp. $\gd_{\fin}(\Gamma)$, $\cd_{\fin}(\Gamma)$, $H_{\fin}^*$) as $\evc \Gamma$ (resp. $\gdvc(\Gamma)$, $\cdvc(\Gamma)$, $\underline{\underline{H}}^*$) and $\efin \Gamma$ (resp. $\gdfin(\Gamma)$, $\cdfin(\Gamma)$, $\underline{H}^*$)  respectively. We also call $\gdvcgamma$ \emph{the virtually cyclic (or VC) geometric dimension of $\Gamma$}.

\begin{lemma}
We have the following properties of the geometric dimension:

\begin{enumerate}\label{properties:gd}
    \item If $H\leq G$, then $\gd_{\calF\cap H}(H)\leq \gd_\calF(G)$ for every family $\calF$ of $G$.
    \item For every group $G$ and every family of subgroups $\calF$ we have
    \[
    \cd_\calF(G)\leq \gd_\calF(G)\leq \max\{3,\cd_\calF(G)\}.
    \]
    In particular, if $\cd_\calF(G)\geq 3$, then $\cd_\calF(G)=\gd_\calF(G)$.
    \item If $G$ is an $n$-crystallographic group then $\gdvc G=n+1$
\end{enumerate}
\end{lemma}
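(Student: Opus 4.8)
The plan is to treat parts (1) and (2) as standard and to put the work into part (3). For (1), if $n=\gd_\calF(G)<\infty$ and $X$ is an $n$-dimensional model for $E_\calF G$, I would simply restrict the action along $H\hookrightarrow G$: all isotropy groups then lie in $\calF\cap H$, and for $K\le H$ the fixed set $X^K$ is contractible exactly when $K\in\calF$, i.e.\ exactly when $K\in\calF\cap H$, so $X$ is an $n$-dimensional model for $E_{\calF\cap H}(H)$. For (2), $\cd_\calF(G)\le\gd_\calF(G)$ because the Bredon cellular chain complex of an $n$-dimensional model is a length-$n$ free resolution of the constant module; $\gd_\calF(G)\le\max\{3,\cd_\calF(G)\}$ is the Bredon analogue of the Eilenberg--Ganea theorem; and the final sentence is immediate from combining the two.

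For (3) I will assume $n\ge2$, since an $n$-crystallographic group with $n\le1$ is virtually cyclic. Write $1\to T\to G\to Q\to1$ with $T\cong\dbZ^n$ the translation lattice and $Q$ the finite holonomy, so $G$ acts properly, cocompactly, and by affine isometries on $\dbR^n$. For the lower bound I would use part (1): since $T\cong\dbZ^n$ and $\vcyc\cap T$ is exactly the family of cyclic subgroups of $\dbZ^n$, we get $\gdvc(G)\ge\gdvc(\dbZ^n)=n+1$. If one wants the equality $\gdvc(\dbZ^n)=n+1$ reproved, I would get $\gdvc(\dbZ^n)\ge n+1$ from the Mayer--Vietoris sequence of the push-out below: with the coefficient system $F$ taking the value $\dbZ[\dbZ^n]$ at the trivial subgroup one has $H^n_{\fin}(\dbZ^n;F)\cong\dbZ$ mapping into the infinite direct sum $\bigoplus_{[H]}H^n_{\fin}(\dbZ^n;F)$, which cannot be surjective, so $H^{n+1}_{\vcyc}(\dbZ^n;F)\ne0$.

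For the upper bound I would invoke the Lück--Weiermann push-out relating $\efin G$ and $\evc G$, whose corners are $\coprod_{[H]\in\calI}G\times_{\NGH}\efin(\NGH)$, $\efin G$, $\coprod_{[H]\in\calI}G\times_{\NGH}E_{\calF[H]}(\NGH)$ and $\evc G$, with $\calI$ running over commensurability classes of infinite cyclic subgroups, $\NGH$ the commensurator of $H$ in $G$, and $\calF[H]$ the family of subgroups of $\NGH$ that are finite or commensurable with $H$; a mapping-cylinder argument then yields
\[
\gdvc(G)\ \le\ \max\bigl\{\,\gdfin(G)+1,\ \sup\nolimits_{[H]\in\calI}\gd_{\calF[H]}(\NGH)\,\bigr\}.
\]
Here $\dbR^n$ with its $G$-action is a model for $\efin G$ (a finite group of isometries of $\dbR^n$ fixes an affine subspace, and an infinite subgroup of $G$ fixes no point), so $\gdfin(G)=n$. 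To bound the other term I would note that $H\cap T$ has finite index in $H$ and is commensurated by $T$, so $T\le\NGH$ and $\NGH$ is again $n$-crystallographic; writing $v$ for the direction of the line carrying $H\cap T$, the linear parts of $\NGH$ preserve $\dbR v$, hence $\NGH$ acts by affine isometries on $\dbR^{n-1}=\dbR^n/\dbR v$, and I would check that this is a model for $E_{\calF[H]}(\NGH)$: a subgroup in $\calF[H]$ has finite orbits on $\dbR^{n-1}$ (finite, or with a finite-index subgroup acting trivially) and hence fixes an affine subspace, whereas an infinite $K\le\NGH$ fixing a point of $\dbR^{n-1}$ would preserve a line parallel to $v$, forcing $K\cap T$ to be a rank-one group of translations along $v$ and hence $K$ to be commensurable with $H$. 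This gives $\gd_{\calF[H]}(\NGH)\le n-1$, and the displayed inequality yields $\gdvc(G)\le\max\{n+1,n-1\}=n+1$.

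The step I expect to be the main obstacle is the lower bound $\gdvc(G)\ge n+1$. Even granting the reduction to $\dbZ^n$ via part (1), one must cite or reprove $\gdvc(\dbZ^n)=n+1$, and a self-contained proof rests on choosing the coefficient system so that a single top-dimensional Bredon cohomology group maps to an infinite direct sum and cannot be onto; making the Mayer--Vietoris bookkeeping precise --- in particular exhibiting an $\mathcal{O}_{\vcyc}\dbZ^n$-module restricting to the desired module at the trivial subgroup --- is where the care is needed. The upper bound, by contrast, is routine once one observes that each commensurator $\NGH$ remains $n$-crystallographic and that collapsing the axis direction produces the required $(n-1)$-dimensional classifying space.
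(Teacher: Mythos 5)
Your proposal is correct, and for parts (1) and (2) it coincides with the paper's treatment: (1) is the same restriction observation, and (2) is exactly the content of the L\"uck--Meintrup reference the paper cites (the Bredon chain complex for one inequality and the Bredon Eilenberg--Ganea theorem for the other). The real difference is in part (3): the paper simply cites \cite{CFH06}, whereas you give a self-contained proof via the L\"uck--Weiermann push-out. Your argument is sound --- the commensurator $\NGH$ of an infinite virtually cyclic $H$ contains the translation lattice $T$, its linear parts preserve the axis direction $\dbR v$, and $\dbR^n/\dbR v$ is indeed a model for $E_{\calF[H]}(\NGH)$, giving the upper bound $n+1$; the lower bound via restriction to $T\cong\dbZ^n$ and the failure of surjectivity onto the infinite direct sum $\bigoplus_{[H]}H^n_{\fin}(\dbZ^n;F)$ in the Mayer--Vietoris sequence is the standard computation of $\cdvc(\dbZ^n)=n+1$. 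In effect you have reproved the content of the cited reference rather than found a new route, which buys self-containedness at the cost of length. Two small points: your displayed dimension bound should really read $\gdvc(G)\le\max\{\gdfin(G),\,\gdfin(\NGH)+1,\,\gd_{\calF[H]}(\NGH)\}$ (the $+1$ attaches to the mapping-cylinder corner, i.e.\ to $\efin\NGH$), though since $\NGH\le G$ this changes nothing here; and your observation that the statement requires $n\ge 2$ is correct --- for $n\le 1$ the group is virtually cyclic and $\gdvc=0$ --- a hypothesis the paper leaves implicit but only ever uses for $n=2,3$.
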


\begin{proof} 
Statement (1) Follows from the observation that a model for $E_\calF G$ is also 
a model for $E_{H\cap \calF}H$, just by restricting the $G$-action to the subgroup 
$H\leq G$. Statement (2) is the main result of \cite{LM00}, while statement (3) 
follows from \cite{CFH06}.
 \end{proof}

\subsection{Graphs of groups} In this subsection we give a quick review of Bass-Serre theory, referring the reader to \cite{Se03} for more details. A graph (in the sense of Bass and Serre) consists of a set of vertices $V=\vertex Y$, a set of (oriented) edges $E=\edge Y$, and two maps $E\to V\times V$, $y\mapsto (o(y),t(y))$, and $E\to E$, $y\mapsto \overline{y}$ satisfying $\overline{\overline{y}}=y$, $\overline{y}\neq y$, and $o(y)=t(\overline{y})$. The vertex $o(y)$ is called the \emph{origin} of $y$, and the vertex $t(y)$ is called the \emph{terminus} of $y$.

An orientation of a graph $Y$ is a subset $E_+$ of $E$ such that $E=E_+\bigsqcup \overline{E}_+$. We can define  \emph{path} and \emph{circuit} in the obvious way.

A \emph{graph of groups} $\mathbf{Y}$ consists of a graph $Y$, a group $Y_P$ for each $P\in \vertex Y$, and a group $Y_y$ for each $y\in \edge Y$, together with monomorphisms $Y_y\to Y_{t(y)}$. One requires in addition $Y_{\overline{y}}=Y_y$.

Suppose that the group $G$ acts without inversions on a graph $X$, i.e. for every $g\in G$ and $x\in \edge X$ we have $g x\neq\overline{x} $. Then we have an induced graph of groups with underlying graph $X/G$ by  associating to each vertex (resp. edge) the isotropy group of a preimage under the quotient map $X\to X/G$.

Given a graph of groups $\mathbf{Y}$, one of the classic theorems of Bass-Serre theory provides the existence of a group $G=\pi_1(\mathbf{Y})$, called the \emph{fundamental group of the graph of groups $\mathbf{Y}$} and a tree $T$ (a graph with no cycles), called the \emph{Bass-Serre tree of $\mathbf{Y}$}, such that $G$ acts on $T$ and the induced graph of groups is isomorphic to $\mathbf{Y}$. The identification $G=\pi_1(\mathbf{Y})$ is called a \emph{splitting} of $G$.

Analogously we can define a graph of spaces $\mathbf{X}$ as a graph $X$, CW-complexes $X_P$ and $X_y$ for each vertex $P$ and each edge $y$, and closed cellular embeddings $X_y\to X_P$ if either $P=t(y)$ or $P=o(y)$. We also assume that the images of the embeddings are disjoint.  In this case we will have a CW-complex, called the \emph{geometric realization}, that is assembled by gluing the ends of the product space $X_y\times [0,1]$ to the spaces $X_{t(y)}$, $X_{o(y)}$.

Finally, given a graph of spaces $\mathbf{X}$ with $\pi_1(X_y)\to \pi_1(X_P)$ injective, there is an associated graph of groups $\mathbf{Y}$ with the same underlying graph and whose vertex (resp. edge) groups are the fundamental groups of the corresponding vertex (resp. edges) CW-complexes. Then, as a generalization of the Seifert--van Kampen theorem, we have that the fundamental group of the geometric realization of $\mathbf{X} $ is naturally isomorphic to the fundamental group of the graph of groups $\mathbf{Y}$.

\section{3-manifolds and decompositions}\label{sec-3mflds}

In this section we will review some $3$-manifold theory. For more details see \cite{Sc83}, \cite{Mo05}.

\subsection{Seifert fibered spaces}
A \emph{trivial fibered solid torus} is the usual product $S^1\times D^2$ with
the product foliation by circles $S^1\times\{y\}, y\in D^2$.  A \emph{fibered
solid torus} is a solid torus with a foliation by circles which is finitely
covered by a trivial fibered solid torus.  Similarly, a \emph{fibered solid
Klein bottle} is a solid Klein bottle which is finitely covered by a trivial fibered
solid torus.

A \emph{Seifert fiber space} is a 3-manifold with a decomposition into disjoint
circles, called fibers, such that each circle has a neighborhood which is a
union of fibers and is isomorphic to a fibered solid torus or a fibered Klein
bottle.

Given a Seifert fiber space $M$, one can obtain an orbifold $B$ by
quotienting out by the $S^1$-action on the fibers of $M$; that is, by
identifying each fiber to a point. By considering the quotient of neighborhoods
of fibers in $M$, the topology $B$ inherits makes it a surface with a natural
orbifold structure; we call $B$ the \emph{base orbifold} of $M$. Such an orbifold $B$ has its \emph{orbifold fundamental group}, which is not necessarily the fundamental group of the underlying topological space, but is related to the fundamental group of $M$ via the following lemma.

\begin{lemma}\cite[Lemma 3.2]{Sc83}
\label{orbifold SES}
Let $M$ be a Seifert fiber space with base orbifold $B$.  Let $\Gamma$ be the
fundamental group of $M$, and let $\Gamma_0$ be the \emph{orbifold fundamental group}
of $B$. Then there is an exact sequence \[1\to K \to \Gamma \to \Gamma_0 \to
1,\] where $K$ denotes the cyclic subgroup of $\Gamma$ generated by a regular
fiber.  The group $K$ is infinite except in cases where $M$ is covered by $S^3$.
\end{lemma}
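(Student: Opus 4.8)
The plan is to realize the fiber-collapsing projection $p\colon M\to B$ on fundamental groups and to identify its kernel geometrically. Let $h\in\Gamma$ be the class of a based regular fiber and put $K=\langle h\rangle$; dragging a regular fiber around a loop (using the local $S^1$-actions on the fibered solid tori and fibered solid Klein bottles that cover neighborhoods of fibers) shows $K$ is central, hence normal, in $\Gamma$. One way to finish is to compare the Seifert presentation of $\Gamma$ with the standard presentation of the orbifold group $\Gamma_0$ — generators for the underlying surface together with a generator $q_i$ for each cone point of order $\alpha_i$, relations $q_i^{\alpha_i}=1$ and the surface relation — and observe that the map killing $h$ induces an isomorphism $\Gamma/K\cong\Gamma_0$, which already gives the exact sequence. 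I would, however, prefer to obtain this (and the claim about $\lvert K\rvert$) from covering space theory.

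Assume first $B$ is a good orbifold, with orbifold universal cover $\rho\colon\widetilde B\to B$; then $\widetilde B$ is a simply connected surface, so $\widetilde B\cong\mathbb R^2$ or $S^2$, and $\Gamma_0$ acts on it with quotient $B$, the stabilizer of a preimage of an order-$\alpha_i$ cone point being cyclic of order $\alpha_i$. Pulling the Seifert fibration back to $\widetilde M:=M\times_B\widetilde B$, over a regular point $\rho$ is a local homeomorphism so there $\widetilde M\to\widetilde B$ is a trivial circle bundle, while over a preimage of a cone point the stabilizer cyclically covers the exceptional fibered solid torus and makes it trivially fibered; hence $\widetilde M\to\widetilde B$ is a locally trivial circle bundle, $\Gamma_0$ acts on $\widetilde M$ covering its action on $\widetilde B$, and this action is free (free over regular points, by fiberwise rotation near former cone points). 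So $\widetilde M\to M$ is a regular covering with deck group $\Gamma_0$ and $1\to\pi_1(\widetilde M)\to\Gamma\to\Gamma_0\to1$ is exact. From the homotopy sequence of $S^1\to\widetilde M\to\widetilde B$ and $\pi_1(\widetilde B)=1$, $\pi_1(\widetilde M)$ is generated by a fiber, hence cyclic, and since $\widetilde M\to M$ is a homeomorphism on fibers over regular points, this generating fiber maps to a regular fiber of $M$; so the image of $\pi_1(\widetilde M)$ in $\Gamma$ is exactly $K$. This proves $K\triangleleft\Gamma$, $K$ cyclic, $\Gamma/K\cong\Gamma_0$.

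For the order of $h$ I would use $K\cong\pi_1(\widetilde M)$ with $\widetilde M$ a circle bundle over $\widetilde B\in\{\mathbb R^2,S^2\}$: over $\mathbb R^2$ it is trivial and $K\cong\mathbb Z$; over $S^2$ it has an Euler number $e$ and $K\cong\mathbb Z/\lvert e\rvert$ (so $K\cong\mathbb Z$ when $e=0$, i.e. $\widetilde M\cong S^2\times S^1$). Thus $K$ is finite exactly when $\widetilde B\cong S^2$ and $e\neq 0$, and then $\widetilde M$ is a circle bundle over $S^2$ with nonzero Euler number, i.e. a lens space (possibly $S^3$), whose universal cover is $S^3$, so $M$ is covered by $S^3$. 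Conversely, if $M$ is covered by $S^3$ then $\Gamma$, hence $K$, is finite. So $K$ is infinite unless $M$ is covered by $S^3$.

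It remains to deal with the bad base orbifolds, namely the teardrop $S^2(p)$ and the spindle $S^2(p,q)$ with $p\neq q$; for these $M$ is built from a trivially fibered $D^2\times S^1$ and one or two fibered solid tori, so it has a genus-one Heegaard splitting, hence is $S^3$, $S^2\times S^1$, or a lens space — and it is not $S^2\times S^1$, whose only Seifert fibration has a genuine surface as base — so $M$ is covered by $S^3$, and since $\pi_1^{\mathrm{orb}}(S^2(p))=1$ and $\pi_1^{\mathrm{orb}}(S^2(p,q))=\mathbb Z/\gcd(p,q)$ the exact sequence again follows from the presentations and the exceptional clause is vacuous. I expect the main obstacle to be the local analysis of the second paragraph: checking that pulling back along the orbifold universal cover really resolves each exceptional (and reflector/Klein-bottle) fiber into a piece of a locally trivial circle bundle, that the $\Gamma_0$-action on $\widetilde M$ is free, and that the normal subgroup $\pi_1(\widetilde M)$ is the geometrically defined $K$; the rest is standard bundle theory over $\mathbb R^2$ and $S^2$ plus a short list of special cases.
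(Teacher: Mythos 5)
The paper offers no proof of this lemma: it is quoted verbatim from \cite[Lemma~3.2]{Sc83}, where Scott argues by deleting fibered neighborhoods of the exceptional fibers to obtain an honest circle bundle over a surface, and then splices the resulting exact sequence with the presentations of $\Gamma$ and $\Gamma_0$ via van Kampen, proving the statement about $|K|$ separately. Your route --- passing to the fibered covering of $M$ induced by the orbifold universal cover $\widetilde B\to B$, identifying $K$ with the image of $\pi_1$ of the resulting genuine circle bundle over $\widetilde B\in\{\mathbb{R}^2,S^2\}$, and reading off both normality and the finiteness criterion from the Euler number over $S^2$ --- is a genuinely different and in some ways cleaner argument, since the ``$K$ infinite unless $M$ is covered by $S^3$'' clause falls out of the same construction. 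One caveat: the naive topological fiber product $M\times_B\widetilde B$ is \emph{not} the right object, because over an exceptional fiber it is one-to-one rather than $\alpha$-to-one and fails to be a covering of $M$; the correct $\widetilde M$ is the one you describe locally (the $\alpha$-fold cyclic cover of a fibered solid torus of type $(\alpha,\beta)$ by a trivially fibered one, free on the core since $\gcd(\alpha,\beta)=1$), so $\widetilde M$ should be defined by that gluing rather than by the displayed formula.

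Two concrete issues should be fixed. First, $K$ is \emph{not} central in general: dragging a regular fiber around a loop $\gamma$ gives $\gamma h\gamma^{-1}=h^{\pm1}$, with sign $-1$ exactly when the fibration is non-orientable along $\gamma$, which happens for any orientable Seifert fibration over a non-orientable base orbifold --- a situation that genuinely occurs for the manifolds in this paper. Only normality holds, and that is all the lemma asserts; your covering-space argument delivers it (as $K=\ker(\Gamma\to\Gamma_0)$), but the sentence claiming centrality should be deleted or weakened. Second, the definition of Seifert fiber space used here allows fibered solid Klein bottle neighborhoods, i.e.\ base orbifolds with reflector circles, and your argument never treats those fibers (you flag this yourself). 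For the closed \emph{orientable} manifolds to which the paper applies the lemma the issue is vacuous, since a solid Klein bottle cannot embed in an orientable $3$-manifold; but as a proof of the lemma as stated, that case would still need the same local analysis, exactly as in Scott.
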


Recall that an orbifold is called \emph{good} if it is the quotient of a manifold by an action of a discrete group of isometries. An orbifold
that is not good is called \emph{bad}.

It is known that every good $2$-orbifold is isomorphic, as an orbifold, to the quotient of $S^2$, $\dbE^2$, or $\dbH^2$  by some discrete subgroup of isometries. Hence  all closed good $2$-orbifolds can be classified as spherical, euclidean or flat, and hyperbolic. Bad $2$-orbifolds are classified in \cite[Theorem 2.3]{Sc83}.

\subsection{Geometric 3-manifolds}
A \emph{Riemannian} manifold $X$ is a smooth
manifold that admits a Riemannian metric.  If the isometry group $\isom(X)$
acts transitively, we say $X$ is \emph{homogeneous}.  If in addition $X$ has a quotient of finite
volume, $X$ is \emph{unimodular}.  A \emph{geometry} is a simply-connected,
homogeneous, unimodular Riemannian manifold along with its isometry group.  Two
geometries $(X,\isom(X))$ and $(X',\isom(X'))$ are \emph{equivalent} if
$\isom(X)\cong\isom(X')$ and there exists a diffeomorphism $X\to X'$ that
respects the $\isom(X), \isom(X')$ actions.  A geometry $(X,\isom(X))$ (often
abbreviated $X$) is \emph{maximal} if there is no Riemannian metric on $X$ with
respect to which the isometry group strictly contains $\isom(X)$.  A manifold
$M$ is called \emph{geometric} if there is a geometry $X$ and discrete subgroup
$\Gamma\leq\isom(X)$ with free $\Gamma$-action on $X$ such that $M$ is
diffeomorphic to the quotient $X/\Gamma$; we also say that $M$ \emph{admits a
geometric structure} modeled on $X$.  Similarly, a manifold with nonempty
boundary is geometric if its interior is geometric.

It is a consequence of the uniformization theorem that compact surfaces
(2-manifolds) admit Riemannian metrics with constant curvature; that is, compact
surfaces admit geometric structures modeled on $\mbs^2$, $\mbe^2$, or $\hyp$. 
In dimension three, we are not guaranteed constant curvature.  Thurston
demonstrated that there are eight $3$-dimensional maximal geometries up to
equivalence (\cite[Theorem 5.1]{Sc83}): $\mbs^3$, $\mbe^3$, $\hypp$,
$\mbs^2\times\mbe$, $\hxr$, $\pslt$, $\nil$, and $\sol$.

\subsection{Prime and JSJ decomposition}
A \emph{closed} $n$-manifold is an $n$-manifold that is compact with empty
boundary.  A \emph{connected sum} of two $n$-manifolds $M$ and $N$, denoted $M\#
N$, is a manifold created by removing the interiors of a smooth $n$-disc $D^n$
from each manifold, then identifying the boundaries $\mbs^{n-1}$.  An $n$-manifold is
\emph{nontrivial} if it is not homeomorphic to $\mbs^n$.  A \emph{prime}
$n$-manifold is a nontrivial manifold that cannot be decomposed as a connected
sum of two nontrivial $n$-manifolds; that is, $M=N\# P$ for some $n$-manifolds
$N,P$ forces either $N=\mbs^n$ or $P=\mbs^n$.  An $n$-manifold $M$ is called
\emph{irreducible} if every 2-sphere $\mbs^2\subset M$ bounds a ball $D^3\subset M$. It is well-known that all orientable prime manifolds are irreducible with the exception of $S^1\times
S^2$. The following is a well-known theorem of Kneser (existence) and Milnor (uniqueness).

\begin{theorem}[Prime decomposition]
\label{prime decomposition}
Let $M$ be a closed oriented nontrivial 3-manifold.  Then $M=P_1\#\ldots\#
P_n$ where each $P_i$ is prime.  Furthermore, this decomposition is unique up to
order and homeomorphism.
\end{theorem}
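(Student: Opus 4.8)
The plan is to prove existence (due to Kneser) and uniqueness (due to Milnor) separately, reducing both to statements about systems of embedded $2$-spheres. For existence, the first step is to observe that $M$ fails to be prime precisely when it contains an \emph{essential} $2$-sphere, i.e.\ one not bounding a ball: a nontrivial connected sum $M = N \# P$ is exhibited by such a sphere (if the splitting sphere bounded a ball, one of $N$, $P$ would be $\dbS^3$), and conversely an essential separating sphere splits $M$ nontrivially while an essential non-separating sphere splits off an $\dbS^1 \times \dbS^2$ summand. The second step is to iterate: split $M$ along an essential sphere, then split the pieces, and so on. The heart of the matter is to show this terminates, which I would do via \emph{Kneser finiteness}: fixing a triangulation $T$ of $M$ with $t$ tetrahedra, any family of pairwise disjoint, pairwise non-parallel essential $2$-spheres in $M$ has at most $c\,t$ components for a universal constant $c$. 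I would prove this with normal surface theory — isotope the family to be normal with respect to $T$, note that in each tetrahedron the normal disk pieces fall into finitely many combinatorial types, and argue that two normal spheres built from parallel stacks of pieces cobound a product $\dbS^2\times[0,1]$ and hence are parallel; a pigeonhole count on the disk types occurring then forces a parallel pair once the number of components exceeds $c\,t$. Given this bound, the iterative splitting produces an ever-growing disjoint non-parallel essential sphere system, so it must stop, leaving $M = P_1 \# \cdots \# P_n$ with every $P_i$ prime.

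For uniqueness, suppose $M = P_1 \# \cdots \# P_n = Q_1 \# \cdots \# Q_m$. Each decomposition corresponds to a system of disjoint embedded spheres in $M$ whose complementary components, capped off with balls, recover the punctured summands; call these systems $\mathcal S$ and $\mathcal S'$. I would put them in transverse position, so that $\mathcal S \cap \mathcal S'$ is a disjoint union of circles, and then remove the intersection by an innermost-circle argument: an innermost circle $c$ on a component of $\mathcal S'$ bounds a disk $D'$ there meeting $\mathcal S$ only along $c$; since the component of $\mathcal S$ through $c$ is a $2$-sphere, $c$ also bounds a disk $D$ on it, and surgering that sphere along $D'$ (replacing $D$ by $D'$) yields an admissible sphere system — after discarding any component bounding a ball — with strictly fewer intersection circles. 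Iterating, $\mathcal S$ and $\mathcal S'$ become disjoint, so each component of $\mathcal S'$ lies inside a single punctured piece $P_i$; by primeness of $P_i$, such a sphere is either inessential or parallel to one of the puncturing boundary spheres. Hence, after discarding inessential spheres, $\mathcal S'$ is isotopic to a subfamily of spheres parallel to those of $\mathcal S$, which forces a bijection between the $P_i$ and the $Q_j$ realized by homeomorphisms.

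The main obstacle is the combinatorial core of Kneser finiteness: extracting an explicit universal constant from the principle ``too many normal spheres force two parallel ones'' requires the full apparatus of normal surfaces together with a careful face-by-face analysis of normal disk types, and essentially all the work in the existence half is concentrated here. A secondary subtlety, on the uniqueness side, is the accounting for non-separating spheres and $\dbS^1\times\dbS^2$ summands: the surgeries can turn separating spheres into non-separating ones, so one tracks homology classes — the number of $\dbS^1\times\dbS^2$ factors equals the rank of the part of $H_1(M)$ carried by non-separating spheres — and one invokes Grushko's theorem to match the free-product decompositions of $\pi_1(M)$, while keeping orientations straight (the orientable primes admitting no orientation-reversing self-homeomorphism must be matched so as to respect orientation). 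For the complete classical arguments I would refer to \cite{Sc83} and \cite{Mo05}.
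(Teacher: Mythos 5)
This theorem is quoted in the paper as classical background (Kneser for existence, Milnor for uniqueness) and is not proved there; the references \cite{Sc83} and \cite{Mo05} are given for the surrounding $3$-manifold theory. Your outline is the standard textbook proof and is correct in its overall architecture: existence reduces to Kneser finiteness for systems of disjoint, pairwise non-parallel essential spheres, and uniqueness to making two sphere systems disjoint by innermost-circle surgery and then matching complementary pieces. Two places where your sketch compresses the real work deserve flagging. First, in Kneser finiteness the pigeonhole is not literally on normal disk types within single tetrahedra; the standard argument analyzes the \emph{complementary regions} of the normal sphere system, shows that each tetrahedron contributes at most a bounded number of non-product pieces (with an extra additive term controlled by $\dim H_1(M;\dbZ/2)$ to rule out twisted $I$-bundle regions), and concludes that once the number of components exceeds this bound some complementary region is a product $\dbS^2\times[0,1]$, giving a parallel pair. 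Second, in the uniqueness step the surgery that replaces a disk $D$ of a sphere of $\mathcal S$ by the innermost disk $D'$ produces \emph{two} spheres from one, and one must check that a suitable choice of the resulting components still yields a sphere system realizing the same prime decomposition; this bookkeeping, together with the homological accounting for non-separating spheres and the $\dbS^1\times\dbS^2$ summands that you correctly identify, is where Milnor's original argument (which proceeds by induction on the number of prime factors) does its work. With those caveats, your proposal is a faithful account of the classical proof the paper is implicitly invoking.
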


Another well known result we will need is the Jaco--Shalen--Johannson decomposition, after Perelman's work.

\begin{theorem}[JSJ decomposition]
\label{jsj decomposition}
For a closed, prime, oriented 3-manifold $M$ there exists a collection
$T\subseteq M$ of disjoint incompressible tori, i.e. two sided properly embedded and $\pi_1$-injective, such that each component of
$M\setminus T$ is either a hyperbolic or a Seifert fibered manifold.  A minimal
such collection $T$ is unique up to isotopy.
\end{theorem}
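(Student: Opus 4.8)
The plan is to assemble this classical result from the characteristic submanifold theory of Jaco--Shalen and Johannson together with Perelman's geometrization theorem; below I outline the structure and indicate where the real difficulty lies. Since $M$ is prime it is either irreducible or homeomorphic to $S^1\times S^2$; in the latter case $M$ is itself Seifert fibered and contains no incompressible torus, so $T=\emptyset$ works and is trivially unique. I may therefore assume from now on that $M$ is closed, orientable, and irreducible.

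For existence, I would first invoke Haken's finiteness theorem to bound the number of pairwise disjoint, pairwise non-parallel incompressible surfaces in $M$, so that a maximal collection $T$ of pairwise disjoint, pairwise non-parallel incompressible tori exists. Maximality forces every component of $M\setminus T$ to contain no essential torus: such a torus could be used, via an innermost-curve cut-and-paste argument against $T$, to enlarge the family. To single out the \emph{minimal canonical} family rather than merely a maximal one, I would pass to the characteristic Seifert submanifold $\Sigma\subseteq M$ --- the submanifold, maximal up to isotopy, that is a disjoint union of Seifert fibered pieces with frontier a union of incompressible tori --- and take $T$ to be $\partial\Sigma$ with duplicated parallel components discarded. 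Each complementary component is then either a piece of $\Sigma$, hence Seifert fibered, or disjoint from $\Sigma$, hence atoroidal; one also uses here that an atoroidal compact irreducible piece that fails to be ``simple'' must be Seifert fibered.

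For uniqueness I would rely on $\Sigma$ being unique up to ambient isotopy, whose proof rests on the \emph{enclosing property}: every incompressible torus and every essential embedded Seifert fibered submanifold of $M$ can be isotoped into $\Sigma$. This is the step I expect to be the main obstacle; it is the technical core of the Jaco--Shalen--Johannson theory and is not short. Granting it, any minimal torus family with the stated properties is forced to be isotopic to $\partial\Sigma$ up to parallel components, which gives the uniqueness clause.

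Finally, I would upgrade ``atoroidal'' to ``hyperbolic'': each atoroidal complementary piece $N$ is a compact orientable irreducible $3$-manifold with (possibly empty) incompressible toral boundary and no essential torus. If $\pi_1(N)$ is finite then $N$ is closed and spherical, forcing $N=M$ and $T=\emptyset$, a case already handled; if $N$ is Seifert fibered there is nothing to prove; otherwise Perelman's geometrization theorem endows the interior of $N$ with a complete finite-volume hyperbolic metric, so $N$ is hyperbolic in the sense used here. This appeal to geometrization is the second genuinely deep ingredient. In the present paper we take the statement as known, citing \cite{Sc83, Mo05} and the references therein.
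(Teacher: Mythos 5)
The paper offers no proof of this statement: it is quoted as a classical result (Jaco--Shalen--Johannson plus Perelman's geometrization), with references to \cite{Sc83} and \cite{Mo05}. Your outline is the standard assembly of that result --- Haken finiteness and the characteristic submanifold for existence, the enclosing property for uniqueness, and geometrization to upgrade atoroidal non-Seifert pieces to hyperbolic --- and you correctly identify the two genuinely deep inputs and defer them to the literature, which matches how the paper treats the theorem.
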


\begin{remark}\label{prime:geometric:splittings}
Note that the prime decomposition provides a graph of groups with trivial edge groups and vertex groups 
isomorphic to the fundamental group of the $P_i$'s. The fundamental group of the graph of groups will 
be isomorphic to $\pi_1(M)$. Similarly the JSJ decomposition of a prime $3$-manifold $M$ gives rise to 
a graph of groups, with all edge groups isomorphic to $\dbZ^2$, and vertex groups isomorphic to the 
fundamental groups of the Seifert fibered and hyperbolic pieces. Again, the fundamental group of the 
graph of groups will be isomorphic to $\pi_1(M)$. Each graph of groups provide a splitting for the 
fundamental groups of the initial manifold. These splittings will be used to provide reductions of the 
general computation to some special cases.
\end{remark}


\section{Push-out constructions for classifying spaces}\label{sec-push-out}

In this section we will review some push-out constructions, used to construct new classifying spaces out of old (or known) ones.

\begin{definition}
Let $\Gamma$ be any finitely generated group, and $\calF\subset \calF'$ a pair of families of subgroups of $\Gamma$. We say a collection $\calA=\{A_\alpha \}_{\alpha \in I}$ of subgroups of $\Gamma$ is adapted  to the pair $(\calF, \calF')$ provided that the following conditions hold:
\begin{enumerate}
    \item For all $A,B \in \calA$, either $A=B$ or $A\cap B\in \calF$;
    \item The collection $\calA$ is conjugacy closed;
    \item Every $A\in \calA$ is self normalizing; i.e. $N_\Gamma (A)=A$;
    \item For all $A\in \calF'\setminus \calF$, there is a $B\in \calA$ such that $A\leq B$.
\end{enumerate}
\end{definition}

\begin{proposition}\cite[p.~302]{LO09}\label{LO-push-out}
Let $\calF \subset \calF'$ be families of subgroup of $\Gamma$. Assume that the collection of subgroups $\calA=\{ H_\alpha \}_{\alpha \in I}$ is adapted to the pair $(\calF, \calF')$. Let $\calH$ be a complete set of representatives of the conjugacy classes within $\calA$, and consider the cellular $\Gamma$-push-out

\[\xymatrix{\coprod_{H\in\calH}\Gamma\times_{H}E_{\calF}H \ar[d] \ar[r]  & E_{\calF}\Gamma\ar[d]\\
\coprod_{H\in\calH}\Gamma\times_{H}E_{\calF'}H\ar[r] & X
}
\]
Then $X$ is a model for $E_{\calF'}\Gamma$. In the above cellular $\Gamma$-push-out we require either (1) the left vertical map is the disjoint union of cellular $H$-maps, and the upper horizontal is an inclusion of $\Gamma$-CW-complexes, or (2) the right vertical map is the disjoint union of inclusion of $H$-CW-complexes, and the upper horizontal map is a cellular $\Gamma$-map.
\end{proposition}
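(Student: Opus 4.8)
The plan is to check directly that the $\Gamma$-CW-complex $X$ produced by the push-out satisfies the two defining properties of a model for $E_{\calF'}\Gamma$: that $X^{K}$ is contractible for $K\in\calF'$ and empty for $K\notin\calF'$. Two elementary facts drive everything. First, in either of the two situations permitted in the statement, one of the maps emanating from $\coprod_{H\in\calH}\Gamma\times_{H}E_{\calF}H$ (in the first case the top horizontal, in the second the left vertical) is an inclusion of $\Gamma$-CW-complexes, hence a $\Gamma$-cofibration; consequently, applying the fixed-point functor $(-)^{K}$ to the push-out square again yields a push-out square, for every subgroup $K\le\Gamma$. Second, for a family $\calG$ and a subgroup $H'\le\Gamma$ one has the standard identification of the fixed points of an induced complex,
\[
\bigl(\Gamma\times_{H'}E_{\calG}H'\bigr)^{K}\;\cong\;\coprod_{\substack{\gamma H'\in\Gamma/H'\\ \gamma^{-1}K\gamma\le H'}}\bigl(E_{\calG}H'\bigr)^{\gamma^{-1}K\gamma},
\]
in which the summand indexed by $\gamma H'$ is contractible if $\gamma^{-1}K\gamma\in\calG$ and empty otherwise (here $E_{\calG}H'$ denotes a model for $E_{\calG\cap H'}H'$).

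With these in hand I would split into three cases according to the position of $K$ relative to $\calF\subseteq\calF'$. If $K\notin\calF'$, then no conjugate of $K$ lies in $\calF'\supseteq\calF$, so in the $K$-fixed push-out square every term on the top row and every term on the bottom-left is empty, whence $X^{K}=\emptyset$. If $K\in\calF$, then every conjugate of $K$ lies in $\calF\subseteq\calF'$, so on $K$-fixed points the map $\coprod_{H'}\Gamma\times_{H'}E_{\calF}H'\to\coprod_{H'}\Gamma\times_{H'}E_{\calF'}H'$ becomes a disjoint union, over the same indexing set of cosets, of maps between contractible complexes, hence a homotopy equivalence; the gluing lemma, applied using that the other leg of the span is a cofibration, then gives $X^{K}\simeq(E_{\calF}\Gamma)^{K}\simeq\ast$.

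The case $K\in\calF'\setminus\calF$ is the heart of the matter, and it is here that all four conditions defining an adapted collection are used. Now every conjugate of $K$ lies outside $\calF$, so $(E_{\calF}\Gamma)^{K}=\emptyset$ and, by the fixed-point formula, $\bigl(\coprod_{H'\in\calH}\Gamma\times_{H'}E_{\calF}H'\bigr)^{K}=\emptyset$ as well; the $K$-fixed push-out square therefore collapses to an identification $X^{K}\cong\bigl(\coprod_{H'\in\calH}\Gamma\times_{H'}E_{\calF'}H'\bigr)^{K}$, which equals the disjoint union of the complexes $\bigl(E_{\calF'}H'\bigr)^{\gamma^{-1}K\gamma}$ over all pairs $(H',\gamma H')$ with $H'\in\calH$ and $\gamma^{-1}K\gamma\le H'$, each such complex being contractible because $\gamma^{-1}K\gamma$ is a conjugate of $K\in\calF'$ contained in $H'$. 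So everything reduces to showing that exactly one such pair occurs. Existence is condition (4): some $B\in\calA$ contains $K$, and writing $B=\gamma_0 H'\gamma_0^{-1}$ with $H'\in\calH$ produces the pair $(H',\gamma_0 H')$. For uniqueness, suppose $(H',\gamma H')$ and $(H'',\delta H'')$ both qualify; then $K$ lies in both $\gamma H'\gamma^{-1}$ and $\delta H''\delta^{-1}$, which belong to $\calA$ by condition (2). Since $K\notin\calF$, condition (1) forces $\gamma H'\gamma^{-1}=\delta H''\delta^{-1}$, so $H'$ and $H''$ are conjugate and hence equal as elements of $\calH$, and then condition (3) gives $\gamma^{-1}\delta\in N_\Gamma(H')=H'$, i.e. $\gamma H'=\delta H''$. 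Thus $X^{K}$ is a single contractible complex. I expect this last case to be the only genuine obstacle: the cofibration/gluing argument and the fixed-point formula are routine, whereas here one must invoke each of the four ``adapted'' hypotheses precisely where it is needed, and keep straight the distinction between the chosen representatives $H'\in\calH$ and their conjugates lying in $\calA$.
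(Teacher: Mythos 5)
Your argument is correct: the paper does not prove this proposition at all (it is imported by citation from \cite{LO09}), and your fixed-point verification --- passing $(-)^{K}$ through the push-out via the cofibration leg, using the coset formula for fixed points of induced complexes, and then using conditions (4), (2), (1), (3) of adaptedness exactly to get existence and uniqueness of the coset $\gamma H'$ containing a given $K\in\calF'\setminus\calF$ --- is precisely the standard proof from the literature. The only cosmetic caveat is that the statement's condition (2) should be read as requiring the \emph{left} vertical map to be the inclusion (the ``right vertical'' phrasing is a typo in the paper), which is how you correctly interpreted it.
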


The following lemmas are straightforward consequences of the definition of adapted collections.

\begin{lemma}\label{adapted:lemma1}
Let $\Gamma$ be a finitely generated discrete group, and let $\calF\subseteq \calF' \subseteq \calF''$ be three nested families of subgroups of $\Gamma$. Let $\calA$ be a collection adapted to the pair $(\calF, \calF'')$. Then $\calA$ is adapted to the pairs $(\calF, \calF')$ and $(\calF', \calF'')$
\end{lemma}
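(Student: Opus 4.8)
The plan is to verify, one condition at a time, the four defining properties of an adapted collection for each of the two pairs $(\calF,\calF')$ and $(\calF',\calF'')$, using only that $\calA$ already satisfies all four for the pair $(\calF,\calF'')$. The first observation is that conditions (2) and (3) --- that $\calA$ is conjugacy closed and that each $A\in\calA$ is self-normalizing, i.e.\ $N_\Gamma(A)=A$ --- are assertions about $\calA$ and $\Gamma$ alone and make no reference to the chosen pair of families; hence they transfer verbatim to both new pairs without any work.

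Next I would handle condition (1). For $A,B\in\calA$ with $A\neq B$, the hypothesis gives $A\cap B\in\calF$. Since $\calF\subseteq\calF'$, this membership already witnesses condition (1) for the pair $(\calF,\calF')$ (where one still asks for membership in $\calF$) and, via $A\cap B\in\calF\subseteq\calF'$, also for the pair $(\calF',\calF'')$ (where one asks for membership in $\calF'$). Finally, for condition (4) I would use the set-difference inclusions $\calF'\setminus\calF\subseteq\calF''\setminus\calF$ and $\calF''\setminus\calF'\subseteq\calF''\setminus\calF$, both immediate from $\calF\subseteq\calF'\subseteq\calF''$. Thus any $A\in\calF'\setminus\calF$ (respectively $A\in\calF''\setminus\calF'$) also lies in $\calF''\setminus\calF$, and condition (4) for $(\calF,\calF'')$ supplies a $B\in\calA$ with $A\leq B$; this gives condition (4) for both new pairs and finishes the verification.

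There is genuinely no obstacle here: the lemma is a direct unwinding of the definition of ``adapted collection'', exploiting that two of the four conditions ignore the families entirely, while the other two are monotone in the obvious direction with respect to enlarging or shrinking the families in the pair. I would keep the written proof correspondingly short.
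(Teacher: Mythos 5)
Your verification is correct and complete: conditions (2) and (3) are independent of the pair of families, condition (1) transfers via $\calF\subseteq\calF'$, and condition (4) follows from the set-difference inclusions you state. The paper omits the proof entirely, calling the lemma a straightforward consequence of the definition, and your argument is exactly the direct unwinding it has in mind.
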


\begin{lemma}\label{adapted:lemma2}
Let $\varphi \colon \Gamma \to \Gamma_0$ be surjective group homomorphism of discrete groups, let $\calF \subseteq \calF'$ be  pair of families of subgroups of $\Gamma_0$, and let $\calA=\{A_\alpha\}_{\alpha \in I}$ be a collection adapted to the pair $(\calF,\calF')$. Then $\tilde{\calA}=\{\varphi^{-1}(A_\alpha)\}_{\alpha\in I}$ is a collection adapted to the pair $(\tilde{\calF},\tilde{\calF}')$ of families of subgroups of $\Gamma$.
\end{lemma}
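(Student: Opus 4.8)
The plan is to check the four defining conditions of an adapted collection for $\tilde{\calA}$ directly, reducing each to the corresponding condition for $\calA$ by means of elementary identities for $\varphi$. I will repeatedly use three facts: (a) $\varphi^{-1}(A)\cap\varphi^{-1}(B)=\varphi^{-1}(A\cap B)$; (b) $g\,\varphi^{-1}(A)\,g^{-1}=\varphi^{-1}\!\bigl(\varphi(g)A\varphi(g)^{-1}\bigr)$ for every $g\in\Gamma$; and (c) $\varphi\bigl(\varphi^{-1}(S)\bigr)=S$ for every subset $S\subseteq\Gamma_0$, where (c) is where surjectivity of $\varphi$ is used and is equivalent to $\varphi^{-1}$ being injective on subsets of $\Gamma_0$. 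I would begin by recalling that $\tilde{\calF}=\{H\le\Gamma:\varphi(H)\in\calF\}$ and $\tilde{\calF}'=\{H\le\Gamma:\varphi(H)\in\calF'\}$ are genuinely families of subgroups of $\Gamma$ with $\tilde{\calF}\subseteq\tilde{\calF}'$ -- closure under subgroups and conjugation being inherited from $\calF,\calF'$ because $\varphi$ is a homomorphism.

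Conditions (1), (2) and (4) then follow formally. For (1): given $\alpha,\beta\in I$, fact (a) identifies $\varphi^{-1}(A_\alpha)\cap\varphi^{-1}(A_\beta)$ with $\varphi^{-1}(A_\alpha\cap A_\beta)$; if these preimages coincide we are done, otherwise $A_\alpha\ne A_\beta$, so $A_\alpha\cap A_\beta\in\calF$, and then $\varphi^{-1}(A_\alpha\cap A_\beta)\in\tilde{\calF}$ since its $\varphi$-image is $A_\alpha\cap A_\beta\in\calF$. For (2): by fact (b) a $\Gamma$-conjugate of $\varphi^{-1}(A_\alpha)$ is the $\varphi$-preimage of a $\Gamma_0$-conjugate of $A_\alpha$, which lies in $\calA$ by conjugacy-closedness of $\calA$, hence in $\tilde{\calA}$. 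For (4): if $B\in\tilde{\calF}'\setminus\tilde{\calF}$ then $\varphi(B)\in\calF'\setminus\calF$, so condition (4) for $\calA$ produces some $A_\alpha$ with $\varphi(B)\le A_\alpha$, and therefore $B\le\varphi^{-1}(\varphi(B))\le\varphi^{-1}(A_\alpha)\in\tilde{\calA}$.

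The step requiring the most care -- and, I expect, the only one that is not purely formal -- is condition (3): $N_\Gamma\bigl(\varphi^{-1}(A_\alpha)\bigr)=\varphi^{-1}(A_\alpha)$. The inclusion $\supseteq$ holds because every subgroup normalizes itself. For $\subseteq$, suppose $g\in\Gamma$ normalizes $\varphi^{-1}(A_\alpha)$; by fact (b) this means $\varphi^{-1}\!\bigl(\varphi(g)A_\alpha\varphi(g)^{-1}\bigr)=\varphi^{-1}(A_\alpha)$, and applying fact (c) (this is exactly where surjectivity enters) gives $\varphi(g)A_\alpha\varphi(g)^{-1}=A_\alpha$, i.e.\ $\varphi(g)\in N_{\Gamma_0}(A_\alpha)=A_\alpha$ by condition (3) for $\calA$; hence $g\in\varphi^{-1}(A_\alpha)$. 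I do not foresee any genuine obstacle; the one pitfall to avoid is dropping the surjectivity hypothesis, since without it the subgroups $\varphi^{-1}(A_\alpha)$ need not be self-normalizing and condition (3) can fail.
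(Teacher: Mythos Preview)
Your proof is correct and is precisely the direct verification the paper has in mind; the paper itself states this lemma as a ``straightforward consequence of the definition of adapted collections'' and omits the proof entirely. Your identification of condition (3) as the place where surjectivity is genuinely needed is accurate.
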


\begin{theorem}
Let $\calF$ be a family of subgroups of the finitely generated discrete group $\Gamma$. Let $\varphi \colon \Gamma \to \Gamma_0$ be a surjective homomorphism. Let $\calF_0\subseteq \calF_0'$ be a nested pair of families of subgroups of $\Gamma_0$ satisfying $\tilde{\calF}_0 \subseteq \calF \subseteq \tilde{\calF}'_0$, and let $\calA=\{A_\alpha\}_{\alpha \in I}$ be  a collection adapted to the pair $\calF_0 \subseteq \calF_0'$. Let $\calH$ be a complete set of representatives of the conjugacy classes within  $\tilde{\calA}=\{\varphi^{-1}(A_\alpha)\}_{\alpha\in I}$, and consider the following cellular $\Gamma$-push-out
\[\xymatrix{\coprod_{\tilde{H}\in\calH}\Gamma\times_{\tilde{H}}E_{\calF_0}H \ar[d] \ar[r]  & E_{\calF_0}\Gamma_0\ar[d]\\
\coprod_{\tilde{H}\in\calH}\Gamma\times_{\tilde{H}}E_{\calF}\tilde{H}\ar[r] & X
}
\]
Then $X$ is a model for $E_{\calF}\Gamma$. In the above cellular $\Gamma$-push-out we require either (1) the left vertical map is the disjoint union of cellular $\tilde{H}$-maps, and the upper horizontal is an inclusion of $\Gamma$-CW-complexes, or (2) the right vertical map is the disjoint union of inclusion of $\tilde{H}$-CW-complexes, and the upper horizontal map is a cellular $\Gamma$-map.
\end{theorem}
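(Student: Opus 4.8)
The plan is to deduce this theorem by combining Proposition~\ref{LO-push-out} with the two functoriality lemmas, Lemma~\ref{adapted:lemma1} and Lemma~\ref{adapted:lemma2}, applied to the surjection $\varphi\colon\Gamma\to\Gamma_0$. First I would observe that by Lemma~\ref{adapted:lemma2}, the preimage collection $\tilde\calA=\{\varphi^{-1}(A_\alpha)\}_{\alpha\in I}$ is adapted to the pair $(\tilde\calF_0,\tilde\calF_0')$ of families of $\Gamma$. Now we are given a family $\calF$ sandwiched between these: $\tilde\calF_0\subseteq\calF\subseteq\tilde\calF_0'$. By Lemma~\ref{adapted:lemma1} (with the three nested families $\tilde\calF_0\subseteq\calF\subseteq\tilde\calF_0'$), the collection $\tilde\calA$ is adapted in particular to the pair $(\tilde\calF_0,\calF)$. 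This is precisely the hypothesis needed to apply Proposition~\ref{LO-push-out} to the group $\Gamma$ with the pair of families $\tilde\calF_0\subseteq\calF$ and the adapted collection $\tilde\calA$; doing so yields that the push-out
\[
\xymatrix{\coprod_{\tilde H\in\calH}\Gamma\times_{\tilde H}E_{\tilde\calF_0}\tilde H \ar[d] \ar[r] & E_{\tilde\calF_0}\Gamma\ar[d]\\
\coprod_{\tilde H\in\calH}\Gamma\times_{\tilde H}E_{\calF}\tilde H\ar[r] & X}
\]
is a model for $E_\calF\Gamma$, with $\calH$ a complete set of conjugacy class representatives in $\tilde\calA$, and subject to the same two alternative cellularity conditions on the maps as in Proposition~\ref{LO-push-out}.

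The remaining point is to reconcile the corners appearing in Proposition~\ref{LO-push-out} for $(\Gamma,\tilde\calF_0)$ with the corners written in the statement, namely $E_{\calF_0}\Gamma_0$ at the top-right and $E_{\calF_0}H$ (i.e.\ $E_{\calF_0}\varphi(\tilde H)$) at the top-left. For this I would use the standard fact that if $\varphi\colon\Gamma\to\Gamma_0$ is surjective and $\calF_0$ is a family of $\Gamma_0$, then any model for $E_{\calF_0}\Gamma_0$, regarded as a $\Gamma$-space via $\varphi$, is a model for $E_{\tilde\calF_0}\Gamma$: a subgroup $L\le\Gamma$ fixes a point iff $\varphi(L)$ does, and $L\in\tilde\calF_0$ iff $\varphi(L)\in\calF_0$, so the fixed-point conditions transport verbatim. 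The same remark applied to the subgroup $\tilde H=\varphi^{-1}(A_\alpha)$, noting $\varphi(\tilde H)=A_\alpha=:H$ since $\varphi$ is surjective, identifies $E_{\calF_0}H$ (pulled back along $\tilde H\twoheadrightarrow H$) as a model for $E_{\tilde\calF_0}\tilde H$. Substituting these identifications into the push-out of the previous paragraph gives exactly the square in the statement, so $X$ is a model for $E_\calF\Gamma$, and the cellularity provisos carry over unchanged.

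The main obstacle, such as it is, is bookkeeping rather than mathematics: one must check that the two alternative conditions (1) and (2) on the push-out maps in Proposition~\ref{LO-push-out} are preserved under the pullback-along-$\varphi$ identifications of the corners, i.e.\ that an inclusion (resp.\ cellular map) of $\Gamma_0$-CW-complexes pulls back to an inclusion (resp.\ cellular map) of $\Gamma$-CW-complexes, and that a cellular $\tilde H$-map between pulled-back $H$-complexes is the pullback of a cellular $H$-map. Both are immediate from the definition of the pulled-back CW-structure. One should also note that conjugacy in $\tilde\calA$ is detected by conjugacy in $\calA$ since $\varphi$ is surjective, so the indexing set $\calH$ is well-defined and matches. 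With these routine verifications in place, the theorem follows.
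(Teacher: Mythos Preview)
Your proposal is correct and follows essentially the same approach as the paper: use Lemmas~\ref{adapted:lemma1} and~\ref{adapted:lemma2} to show $\tilde{\calA}$ is adapted to $(\tilde{\calF}_0,\calF)$, identify $E_{\calF_0}\Gamma_0$ and $E_{\calF_0}H$ with $E_{\tilde{\calF}_0}\Gamma$ and $E_{\tilde{\calF}_0}\tilde H$ via restriction along $\varphi$, and then invoke Proposition~\ref{LO-push-out}. The paper's proof is simply a terser version of exactly this argument.
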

\begin{proof}
It can be easily verified that, via restriction with $\varphi$, $E_{\calF_0}\Gamma_0=E_{\tilde{\calF}_0}\Gamma$ and $E_{\calF_0}H=E_{\tilde{\calF}_0}\tilde{H}$. From Lemmas \ref{adapted:lemma1} and \ref{adapted:lemma2} we have that $\tilde{\calA}$ is a collection adapted to the pair $(\tilde{\calF}_0,\calF)$. Then by Proposition \ref{LO-push-out} we have that the above push-out is a model for $E_\calF \Gamma$.
\end{proof}

The following immediate corollary is more suitable for our purposes, and it will be used jointly with Lemma \ref{orbifold SES}.

\begin{corollary}\label{push-out:cyclic:kernel}
Let $\Gamma$ be  finitely generated discrete group. Let $\varphi \colon \Gamma \to \Gamma_0$ be a surjective homomorphism with cyclic kernel. Let $\fin_0$ and $\vcyc_0$ be the famlies of finite and virtually cyclic subgroups of  $\Gamma_0$ respectively. Let $\calA$ be  a collection adapted to the pair $(\fin_0, \vcyc_0)$. Let $\calH$ be a complete set of representatives of the conjugacy classes within  $\tilde{\calA}$, and consider the following cellular $\Gamma$-push-out
\[\xymatrix{\coprod_{\tilde{H}\in\calH}\Gamma\times_{\tilde{H}}\underline{E}H \ar[d] \ar[r]  & \underline{E}\Gamma_0\ar[d]\\
\coprod_{\tilde{H}\in\calH}\Gamma\times_{\tilde{H}}\underline{\underline{E}}\tilde{H}\ar[r] & X
}
\]
Then $X$ is a model for $\underline{\underline{E}}\Gamma$. In the above cellular $\Gamma$-push-out we require either (1) the left vertical map is the disjoint union of cellular $\tilde{H}$-maps, and the upper horizontal is an inclusion of $\Gamma$-CW-complexes, or (2) the right vertical map is the disjoint union of inclusion of $\tilde{H}$-CW-complexes, and the upper horizontal map is a cellular $\Gamma$-map.
\end{corollary}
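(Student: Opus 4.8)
The plan is to deduce this corollary as the special case of the preceding theorem in which one takes $\calF_0 = \fin_0$ and $\calF_0' = \vcyc_0$ as the families of $\Gamma_0$, and $\calF = \vcyc$ (the family of virtually cyclic subgroups of $\Gamma$) as the family of $\Gamma$ occurring there. The collection $\calA$ is assumed adapted to the pair $(\fin_0,\vcyc_0)$, which is exactly what that theorem requires; so the only hypothesis that genuinely needs to be verified is the nesting condition $\widetilde{\fin_0} \subseteq \vcyc \subseteq \widetilde{\vcyc_0}$, where for a family $\calG$ of $\Gamma_0$ I write $\widetilde{\calG} = \{\, H \le \Gamma : \varphi(H) \in \calG \,\}$ for its pullback along $\varphi$.

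For the right-hand inclusion $\vcyc \subseteq \widetilde{\vcyc_0}$, note that if $V \le \Gamma$ is virtually cyclic then $\varphi(V)$, being a quotient of $V$, is virtually cyclic, so $\varphi(V) \in \vcyc_0$ and hence $V \in \widetilde{\vcyc_0}$. For the left-hand inclusion $\widetilde{\fin_0} \subseteq \vcyc$ --- which is the one point of the argument with any content, and the only place the hypothesis that $\ker\varphi$ be cyclic gets used --- suppose $H \le \Gamma$ satisfies $\varphi(H) \in \fin_0$. Then $H$ fits into the extension $1 \to H \cap \ker\varphi \to H \to \varphi(H) \to 1$ whose kernel $H \cap \ker\varphi$, being a subgroup of the cyclic group $\ker\varphi$, is cyclic and has index $|\varphi(H)| < \infty$ in $H$; thus $H$ has a cyclic subgroup of finite index, i.e.\ $H$ is virtually cyclic. (The inclusion $\widetilde{\fin_0} \subseteq \widetilde{\vcyc_0}$ is automatic, since pullback preserves containment.) Granting this, Lemmas~\ref{adapted:lemma1} and~\ref{adapted:lemma2}, applied just as in the proof of the theorem, promote $\calA$ to a collection $\widetilde{\calA} = \{\varphi^{-1}(A_\alpha)\}$ adapted to $(\widetilde{\fin_0}, \vcyc)$, so all the hypotheses of the theorem hold.

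It remains to check that the square appearing in the corollary is the one the theorem produces. Restricting the $\Gamma_0$-action along $\varphi$ turns a model for $\underline{E}\Gamma_0 = E_{\fin_0}\Gamma_0$ into a model for $E_{\widetilde{\fin_0}}\Gamma$: for $H \le \Gamma$ the fixed set $(E_{\fin_0}\Gamma_0)^H$ equals $(E_{\fin_0}\Gamma_0)^{\varphi(H)}$, which is contractible precisely when $\varphi(H) \in \fin_0$, i.e.\ when $H \in \widetilde{\fin_0}$, and every isotropy group of the restricted action lies in $\widetilde{\fin_0}$. The same observation applied to the surjection $\varphi|_{\widetilde H} \colon \widetilde H \to A_\alpha$ --- where $\widetilde H = \varphi^{-1}(A_\alpha)$ and, abusing notation exactly as in the theorem, $H$ denotes $A_\alpha$ --- identifies the restricted $\widetilde H$-space $\underline{E}H$ with $E_{\widetilde{\fin_0} \cap \widetilde H}\,\widetilde H$, while $\underline{\underline{E}}\,\widetilde H = E_{\vcyc \cap \widetilde H}\,\widetilde H$ by definition. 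Hence the corollary's push-out is literally the theorem's push-out for the choices above, and the theorem yields that $X$ is a model for $E_{\vcyc}\Gamma = \underline{\underline{E}}\Gamma$. I do not anticipate any real obstacle here: the entire argument is the cyclic-by-finite observation plus the routine bookkeeping needed to recognize the two push-outs as the same.
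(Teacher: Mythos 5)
Your proposal is correct and is exactly the specialization the paper intends when it calls this an ``immediate corollary'' of the preceding theorem: take $\calF_0=\fin_0$, $\calF_0'=\vcyc_0$, $\calF=\vcyc$, and observe that the cyclic kernel gives $\widetilde{\fin_0}\subseteq\vcyc\subseteq\widetilde{\vcyc_0}$. The verification of the nesting and the identification of the two push-outs are both accurate, so nothing further is needed.
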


Next, we will show how to construct a classifying space (for a suitable family) of the fundamental group of a graph of groups, by assembling the classifying spaces of the vertex groups and the edge groups. We will later apply these constructions in conjunction with Remark \ref{prime:geometric:splittings}.\\

Let $\mathbf{Y}$ be a graph of groups with vertex groups $Y_P$ and edge groups $Y_y$, and fundamental group $G$. We are going to construct a graph of spaces $\mathbf{X}$ using the classifying spaces of the edges and the vertices and the corresponding families of virtually cyclic subgroups. Let $X_{P}$ be a model for
for the classifying space $\evc Y_P$, and $X_{y}$ be a model for $\evc Y_y$, for every vertex $P$ and edge $y$ of $Y$. So for every monomorphism $Y_y\to Y_{t(y)}$ we have a $Y_y$-equivariant cellular map (unique up to $Y_y$-homotopy) $X_y \to X_{t(y)}$ which leads to the $G$-equivariant cellular map $G\times_{Y_y} X_y \to G\times_{Y_{t(y)}} X_{t(y)} $. This gives us the information required to define a graph of spaces $\mathbf{X}$ with underlying graph $T$, the Bass-Serre tree of $\mathbf{Y}$. Moreover, we have a cellular $G$-action on the geometric realization $X$ of $\mathbf{X}$.


\begin{proposition}
\label{bass serre construction}
The geometric realization $X$ of the graph of spaces $\mathbf{X}$ constructed above is a model for $E_{\fmly}G$, where $\fmly$ is the family of virtually
cyclic subgroups of $G$ that are conjugate to a virtually cyclic subgroup in one
of the $Y_y$ or $Y_P$, $y\in \edge Y$, $P\in \vertex Y$. In particular, there exists a model for $E_\calF G$ of dimension  \[\gd_\calF(G)\leq \max\{\gdvc(Y_y)+1,\gdvc(Y_P)| y\in \edge Y, P\in \vertex Y \}. \]
\end{proposition}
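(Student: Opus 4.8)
The plan is to verify that $X$ satisfies the two defining properties of a model for $E_\calF G$ directly from the construction, exploiting the $G$-equivariant projection $p\colon X\to T$ onto the Bass--Serre tree. The basic fact I will use repeatedly is that, by definition of a model, for any model $Z$ of $E_\calG K$ and any subgroup $H\le K$ the fixed set $Z^H$ is contractible if $H\in\calG$ and empty otherwise. Applied to the part of $X$ lying over a vertex $v$ of $T$, which is a model for $E_\vcyc G_v$ (here $G_v\le G$ is the stabilizer of $v$, a conjugate of some $Y_P$), this says that $X_v^H$ is contractible when $H$ is a virtually cyclic subgroup of $G_v$ and empty otherwise; likewise for the edge spaces. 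I would first record that $\calF$ is genuinely a family: it is conjugacy-closed by definition, and any subgroup of a member of $\calF$ is again virtually cyclic and again conjugate into some $Y_y$ or $Y_P$.

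Next I would check that every isotropy group of $X$ lies in $\calF$, equivalently that $X^H=\emptyset$ whenever $H\notin\calF$. After replacing the gluing maps $X_y\to X_{t(y)}$ by equivariant cellular maps, $X$ is an honest $G$-CW-complex whose cells are the $G$-translates of the cells of the vertex models $X_P$ and of the mapping cylinders $X_y\times[0,1]$; hence the stabilizer of such a cell is conjugate to a cell stabilizer of some $X_P=E_\vcyc Y_P$ or $X_y=E_\vcyc Y_y$, thus to a virtually cyclic subgroup of $Y_P$ or $Y_y$, and so lies in $\calF$. Consequently, any point of $X$ fixed by $H$ lies in a cell whose stabilizer is a member of $\calF$ containing $H$, forcing $H\in\calF$.

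It remains to show $X^H$ is contractible for $H\in\calF$. Conjugating, we may assume $H\le Y_P$, so $H$ fixes the corresponding vertex of $T$, giving $T^H\ne\emptyset$; since the fixed set of a subgroup acting without inversions on a tree is a subtree, $T^H$ is a nonempty subtree and hence contractible. As $p$ is $G$-equivariant, $X^H$ lies over $T^H$, and one checks that $X^H$ is precisely the geometric realization of the graph of spaces over the tree $T^H$ with vertex space $X_w^H$ over $w$ and edge space $X_e^H$ over $e$; since every such $w$ and $e$ satisfies $H\le G_w$, $H\le G_e$ with $H$ virtually cyclic, all these spaces are contractible by the fact recalled above. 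Finally, the geometric realization of a graph of nonempty contractible spaces over a tree, with cellular (hence cofibrant) gluing maps, is contractible: building $T^H$ up one edge at a time from a single vertex, one attaches at each step a mapping cylinder of a map of contractible complexes and then a contractible vertex complex along a contractible subcomplex via cofibrations, operations preserving contractibility (and passing to the colimit along cofibrations when $T^H$ is infinite). Hence $X^H\simeq\ast$, and $X$ is a model for $E_\calF G$.

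For the dimension estimate, take each $X_P$ and $X_y$ to be a model of minimal dimension $\gdvc(Y_P)$ and $\gdvc(Y_y)$ with cellular gluing maps; then $\dim(X_y\times[0,1])=\gdvc(Y_y)+1$, so $\dim X=\max\{\gdvc(Y_P),\ \gdvc(Y_y)+1 \mid P\in\vertex Y,\ y\in\edge Y\}$, which gives the claimed bound on $\gd_\calF(G)$. The main obstacle is not a single deep point but the accumulation of ``standard'' verifications surrounding the graph-of-spaces construction: arranging that $X$ is a bona fide $G$-CW-complex (so that isotropy groups are cell stabilizers fixing their cells pointwise), identifying $X^H$ with the realization of the restricted graph of spaces over $T^H$, and proving that a graph of contractible spaces over a possibly infinite tree has contractible realization — each routine, but requiring care with equivariant cellular approximation, subdivisions, and colimits along cofibrations.
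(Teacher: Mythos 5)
Your proposal is correct and follows essentially the same route as the paper: project $G$-equivariantly onto the Bass--Serre tree, check that isotropy groups lie in $\calF$, and identify $X^H$ for $H\in\calF$ as the part of $X$ lying over the subtree $T^H$, with contractible vertex and edge fixed sets. Your treatment of the contractibility of $X^H$ (building up the realization over $T^H$ one edge at a time via mapping cylinders and cofibrations) is in fact more careful than the paper's one-line ``contract along $\phi$ to $T^H$, then contract $T^H$,'' but it is the same underlying idea.
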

\begin{proof}
Let $\phi:X\to T$ be a deformation retraction collapsing each copy of $X_{P}$
down onto the vertex $\widetilde{P}\in\vrt T$ to which it corresponds, and similarly
collapsing each $X_{y}\times[0,1]$ down along the $X_{y}$ component to the
corresponding edge $\widetilde{y}\in\edge T$, $\widetilde{y}\simeq [0,1]$. Then  $G$ has a natural action on $X$ via left multiplication, which permutes the
copies of each $X_{P}$ or $X_{y}$ so that $\phi$ is a $G$-equivariant map.
It remains to show that the $G$-$CW$-complex $X$ is a model for $E_{\fmly}G$.

Suppose first that $H\leq G$ is not in $\fmly$; then either $H$ is not
conjugate into any vertex subgroup or is not virtually cyclic.  If $H$ is not
conjugate into any vertex subgroup, then $T^H=\emptyset$; in particular, $H$
does not fix any copy of $X_{P}$ or $X_{y}\times[0,1]$ in $X$, so
$X^H=\emptyset$.  If $H$ is not virtually cyclic, then even if the $H$-action on
$T$ does fix some nonempty subgraph, the $H$-action on any corresponding
$X_{P}$ or $X_{y}\times[0,1]$ must have empty fixed set; again, this
implies that $X^H=\emptyset$.

On the other hand, suppose $H\in\fmly$; that is, $H$ is virtually cyclic and
conjugate into the subgroup $Y_P$, for some vertex $P$.  Then $H$ fixes the copy of
$X_{P}$ in $X$ corresponding to a fixed vertex in $T$. As $H$ is virtually
cyclic, and $X_{P}$ is a model for $\evc Y_{P}$, $(X_{P})^H$ is not empty. 
Moreover, given any two vertices $P$ and $Q$ in $\vrt T$ fixed by the
$H$-action, the unique geodesic path $c$ in $T$ connecting $P$ and $Q$ must also
be fixed; in particular, $T^H$ is a connected subgraph of the tree $T$, so that
$T^H$ is itself a tree.  Let $\widetilde{y}\in\edge T$ be fixed, and consider
the copy $X_{y}\times[0,1]$ with $\phi$-image $\widetilde{y}$; then the
$H$-action on $X_{y}$ has nonempty fixed set $(X_{y})^H$, so in particular
$(X_{y})^H\times[0,1]$ is nonempty.  Thus, the $\phi$-preimage of $T^H$ is a
nonempty, connected subspace $X^H\subseteq X$.  To see that $X^H$ is
contractible, first contract down along $\phi$ to $T^H$, then contract the tree
$T^H$.

\end{proof}

\begin{definition}\label{acylindrical:def}
Let  $\mathbf{Y}$ be a graph of groups with fundamental group $G$. The splitting of $G$ is said to be \emph{acylindrical} if there exists an integer $k$ such that, for every path $c$ of length $k$ in the Bass-Serre tree $T$ of $\mathbf{Y}$, the stabilizer of $c$ is finite.
\end{definition}

The following proposition roughly says that, provided $\mathbf{Y}$ gives an ayclindrical splitting of $G$, you can attach $2$-cells to the classifying space from Proposition \ref{bass serre construction} to get a model for $\evc G$. 

\begin{proposition}\label{push-out:acyl}
Let $\mathbf{Y}$ be a graph of groups giving an acylindrical splitting of $G$. Let $\calF$ be the family of virtually cyclic subgroups of $G$ that conjugate into a vertex group in $\mathbf{Y}$. Let $\calA$ be the collection of maximal virtually cyclic subgroups of $G$ not in $\calF$. Let $\calH$ be a complete set of representatives of the conjugacy classes within $\calA$. Let $\{*\}$ be the one point space, and consider the following cellular $G$-push-out
\[\xymatrix{\coprod_{H\in\calH} G\times_{H}\dbE \ar[d] \ar[r]  & E_{\calF}G\ar[d]\\
\coprod_{H\in\calH}G\times_{H}\{*\}\ar[r] & X
}
\]
Then $X$ is a model for $\underline{\underline{E}}G$. In the above cellular $G$-push-out we require either (1) the left vertical map is the disjoint union of cellular $H$-maps, and the upper horizontal is an inclusion of $G$-CW-complexes, or (2) the right vertical map is the disjoint union of inclusion of $H$-CW-complexes, and the upper horizontal map is a cellular $G$-map.
\end{proposition}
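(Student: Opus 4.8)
The plan is to apply Proposition \ref{LO-push-out} (the Lück--Weiermann push-out of \cite{LO09}) to the pair of families $\calF \subseteq \vcyc$, using the collection $\calA$ of maximal virtually cyclic subgroups of $G$ not in $\calF$. The main point is to verify that $\calA$ is adapted to $(\calF,\vcyc)$, and for this the acylindricity hypothesis is the crucial ingredient. First I would check condition (4): every virtually cyclic subgroup $A$ of $G$ not in $\calF$ is contained in some maximal one, so I must rule out an infinite ascending chain $A \leq A_1 \leq A_2 \leq \cdots$ of virtually cyclic subgroups. Since a virtually cyclic group has a canonical infinite cyclic (or finite) normal subgroup, passing up such a chain the "translation length" along the Bass--Serre tree can only decrease or the group stays bounded; acylindricity forces the chain to stabilize, because an arbitrarily large virtually cyclic group acting on $T$ without a global fixed vertex must move a point a definite amount and hence would eventually stabilize a long path nontrivially, contradicting Definition \ref{acylindrical:def}. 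This gives the maximal overgroup, and it is unique because two distinct maximal virtually cyclic subgroups not in $\calF$ intersect in a subgroup fixing a long path in $T$ (since both contain infinite-order elements acting hyperbolically or each fixes an end), hence finite by acylindricity — this simultaneously establishes condition (1).

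Next I would handle conditions (2) and (3). Conjugacy-closedness (2) is immediate since maximality and membership in $\calF$ are both conjugation-invariant. For self-normalization (3): if $H \in \calA$ and $g$ normalizes $H$, then $\langle H, g\rangle$ normalizes the infinite cyclic characteristic subgroup of $H$, so $\langle H, g\rangle$ is again virtually cyclic; it cannot lie in $\calF$ (as it contains $H \notin \calF$), so by maximality $\langle H, g \rangle = H$, i.e. $g \in H$. (In the case $H$ is finite — but an infinite virtually cyclic subgroup is what arises here, since a finite subgroup lies in $\fin \subseteq \calF$, so in fact every element of $\calA$ is infinite virtually cyclic, which also streamlines the earlier arguments.) With $\calA$ adapted to $(\calF,\vcyc)$ verified, Proposition \ref{LO-push-out} applies directly: the push-out
\[
\xymatrix{\coprod_{H\in\calH} G\times_{H}E_{\calF}H \ar[d] \ar[r]  & E_{\calF}G\ar[d]\\
\coprod_{H\in\calH}G\times_{H}E_{\vcyc}H\ar[r] & X}
\]
is a model for $E_{\vcyc}G = \evc G$.

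Finally I would identify the corner spaces with those in the statement. For $H \in \calH$ infinite virtually cyclic, $E_{\vcyc}H = \{*\}$ since $H \in \vcyc$, and $E_{\calF}H$ is a model for $E_{\fin}H$ because the virtually cyclic subgroups of $H$ lying in $\calF$ are exactly the finite ones (any infinite virtually cyclic subgroup of $H$ has finite index, so $\langle$ it, anything $\rangle$ is virtually cyclic and by maximality of $H$ would have to be conjugate into a vertex group, contradicting $H \notin \calF$ — so no infinite subgroup of $H$ is in $\calF$). An infinite virtually cyclic group admits the real line $\dbE$ with the evident action as a model for $E_{\fin}H = \underline{E}H$, giving the left column $\coprod G \times_H \dbE \to \coprod G \times_H \{*\}$ as claimed. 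Combined with Proposition \ref{bass serre construction}, which exhibits $E_{\calF}G$ of the asserted type, this yields the stated push-out description of $\evc G$. The main obstacle is the adaptedness verification — specifically extracting the no-infinite-ascending-chain and finite-intersection statements from acylindricity — which is where the length bound $k$ of Definition \ref{acylindrical:def} does all the work.
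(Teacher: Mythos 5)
Your overall architecture is the same as the paper's: apply Proposition \ref{LO-push-out} to the pair $(\calF,\vcyc)$ with the collection $\calA$, then identify the corner spaces $E_{\calF}H\simeq\dbE$ and $\evc H\simeq\{*\}$. One organizational difference: the paper does not reprove adaptedness of $\calA$ at all --- it simply cites \cite[Claim 3]{LO09_2}, which is exactly the statement that acylindricity makes the maximal virtually cyclic subgroups outside $\calF$ an adapted collection. Your from-scratch sketch of that claim gestures at the right mechanism (axes of hyperbolic elements, acylindricity bounding pointwise stabilizers of long paths), but the ascending-chain phrasing is loose; the clean route is to take the setwise stabilizer of the axis of a hyperbolic element of $A$ and use acylindricity to see it is virtually cyclic and is the unique maximal overgroup.

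The genuine gap is in your identification of $E_{\calF}H$ with $\underline{E}H$, i.e.\ the claim that $\calF\cap H$ is exactly the family of finite subgroups of $H$. Your justification --- ``any infinite virtually cyclic subgroup of $H$ has finite index, so $\langle\text{it, anything}\rangle$ is virtually cyclic and by maximality of $H$ would have to be conjugate into a vertex group'' --- is circular as written: maximality of $H$ in $\calA$ says nothing about $H$ being conjugate into a vertex group, and the step you actually need is the implication ``$V\leq H$ infinite, $V$ conjugate into $Y_P$, $[H:V]<\infty$ $\Longrightarrow$ $H$ conjugate into $Y_P$,'' which contradicts $H\notin\calF$. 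That implication is not formal: the paper proves it by embedding $G$ in the free product $F(\mathbf{Y})=*_{P}Y_P*F_{\edge Y}$ and invoking the normal form theorem to see that no element outside $Y_P$ has a nontrivial power inside $Y_P$. (An equivalent tree-theoretic argument: $H\notin\calF$ forces $H$ to contain a hyperbolic isometry of the Bass--Serre tree, and any finite-index subgroup contains a power of it, hence also fails to fix a vertex, hence is not in $\calF$.) Some such argument must be supplied. You also omit the reverse containment, needed for $\dbE$ to be a model for $E_{\calF}H$: every finite subgroup of $H$ does lie in $\calF$, because a finite group acting on the tree $T$ has nonempty fixed-point set and is therefore conjugate into a vertex group --- an easy step, but part of the verification.
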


\begin{proof}
From \cite[Claim 3]{LO09_2} we know that $\calA$ is an adapted collection to the pair $(\calF, \vcyc)$. Hence by Proposition \ref{LO-push-out} we just have to prove that, for all $H\in \calH$, $\dbE$ and $\{*\}$ are models for $E_\calF H$ and $\evc H$ respectively. 
It is clear that $\{*\}$ is a model for $\evc H$ since $H$ is virtually cyclic. To verify the other case, it suffices to check that $\calF\cap H$ is the family of finite subgroups of $H$. 

Let $V\in\fmly$ satisfy $V\leq H$. Then $V$ can be conjugated into $Y_P$ for
some vertex group in $Y$; say $gVg^{-1}\leq Y_P$. Since $V$ is virtually
cyclic, let $\langle v\rangle$ be a finite-index cyclic subgroup of $V$.  If $\langle
v\rangle$ is infinite, then $[H:\langle v\rangle]<\infty$.  If $h\in
H\setminus\langle v\rangle$, then $h^n\in\langle v\rangle$ for some
$n\in\mbn$, which means that $gh^ng^{-1}\in Y_P$. Now recall that $G$ can be identified with a subgroup of the free product $F(\mathbf{Y})=*_{\vertex Y}Y_P *F_{\edge Y}$, where $F_{\edge Y}$ is the free group on $\edge Y$.  Therefore there is no element $f\in F(\mathbf{Y})\setminus Y_P$ with a power in $Y_P$.  Thus, we
must have that $ghg^{-1}\in Y_P$.  But this implies that $gHg^{-1}\leq Y_P$,
contradicting that $H\notin\fmly$. So we conclude that $\langle v\rangle$ must be finite.  Since $V$ has a finite subgroup of finite index, it is also finite.

Conversely, if $F\leq H$ is a finite subgroup, then $T^F$ is
nonempty (note that $T$ is $\cat{0}$). 
In particular, $F$ fixes some vertex $gY_P\in\vrt T$, and is therefore conjugate
to a subgroup of $Y_P$. This shows $F\in\fmly$, giving the reverse containment.

\end{proof}

\begin{corollary}\label{vcgd:acyl}
Let $\mathbf{Y}$ be a graph of groups giving an acylindrical splitting of $G$. Then
\[
\gdvc(G)\leq \max\{2, \gdvc(Y_y)+1,\gdvc(Y_P)| y\in \edge Y, P\in \vertex Y\}.
\]
\end{corollary}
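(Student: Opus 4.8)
The plan is to deduce Corollary~\ref{vcgd:acyl} directly from Proposition~\ref{push-out:acyl}, which gives an explicit model for $\evc G$ as a $G$-push-out, together with Proposition~\ref{bass serre construction}, which controls the dimension of the model for $E_\calF G$ appearing in the upper-right corner. First I would recall that in the push-out of Proposition~\ref{push-out:acyl} the space $X$ is built from three ingredients: the model $E_\calF G$ in the top-right, the disjoint union $\coprod_{H\in\calH} G\times_H \dbE$ in the top-left, and the disjoint union $\coprod_{H\in\calH} G\times_H \{*\}$ in the bottom-left. Since a $G$-push-out of $G$-CW-complexes has dimension at most the maximum of the dimensions of the three corner spaces that map into it, plus a possible $+1$ coming from the mapping-cylinder direction when gluing, I would bound $\dim X$ by the maximum of $\dim E_\calF G$, $\dim\big(\coprod G\times_H \dbE\big)+1 = 1+1 = 2$, and $\dim\big(\coprod G\times_H\{*\}\big)=0$. (The $+1$ on the $\dbE$ term accounts for the cylinder $\dbE\times[0,1]$ implicit in realizing the push-out as a genuine CW-complex; alternatively one invokes the standard fact that attaching cells along a map from an $n$-complex raises dimension to at most $n+1$.)

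Next I would invoke Proposition~\ref{bass serre construction} applied to the graph of groups $\mathbf{Y}$: it produces a model for $E_\calF G$ — where $\calF$ is exactly the family of virtually cyclic subgroups of $G$ conjugate into some $Y_y$ or $Y_P$, matching the family in Proposition~\ref{push-out:acyl} — of dimension at most $\max\{\gdvc(Y_y)+1,\ \gdvc(Y_P) \mid y\in\edge Y,\ P\in\vertex Y\}$. One small point to check is that the family $\calF$ named in Proposition~\ref{push-out:acyl} (``virtually cyclic subgroups of $G$ that conjugate into a vertex group'') agrees, as a family, with the one in Proposition~\ref{bass serre construction} (``\dots conjugate into some $Y_y$ or $Y_P$''); this holds because every edge group $Y_y$ injects into an adjacent vertex group $Y_{t(y)}$, so conjugating into an edge group is subsumed by conjugating into a vertex group, and conversely edge groups are themselves subgroups of vertex groups — hence the two families coincide. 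Substituting this dimension bound into the estimate from the previous paragraph gives
\[
\gdvc(G)\ \leq\ \dim X\ \leq\ \max\Big\{2,\ \gdvc(Y_y)+1,\ \gdvc(Y_P)\ \big|\ y\in\edge Y,\ P\in\vertex Y\Big\},
\]
which is the claimed inequality.

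The only genuinely non-routine point is justifying the dimension bound for a cellular $G$-push-out and making sure the $\dbE$-term contributes $2$ rather than $1$; this is where one must be careful, since $\dbE$ itself is only $1$-dimensional but attaching it along the push-out effectively glues $1$-cells' worth of data and then cones off, producing $2$-cells (this is the content of the informal remark preceding Proposition~\ref{push-out:acyl} that ``you can attach $2$-cells\dots to get a model for $\evc G$''). I expect this to be the main obstacle only in the sense of bookkeeping: the underlying fact, that a $G$-push-out along inclusions of $G$-CW-complexes has dimension at most one more than the maximum dimension of the spaces being glued in (and exactly the max when one of the maps is already an inclusion of the full lower corner), is standard. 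Everything else is a matter of matching up the families and the notation between the two propositions, and the proof is essentially a two-line invocation once those identifications are in place.
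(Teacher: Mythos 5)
Your proposal is correct and follows essentially the same route as the paper's proof: apply Proposition~\ref{bass serre construction} to get a model for $E_{\calF}G$ of dimension $\max\{\gdvc(Y_y)+1,\gdvc(Y_P)\}$, then use the push-out of Proposition~\ref{push-out:acyl} to attach $2$-cells and obtain a model for $\evc G$ of dimension $\max\{2,\gdvc(Y_y)+1,\gdvc(Y_P)\}$. Your extra care in checking that the two families $\calF$ coincide (edge groups embed into vertex groups) and in bookkeeping the $+1$ from the mapping cylinder on the $\dbE$ factor is sound, though the paper treats these points as immediate.
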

\begin{proof}
Fix minimal models for $\evc Y_y$, and $\evc Y_P$ for every edge $y$ and every vertex $P$ of $Y$. Now from Proposition \ref{bass serre construction} we get a model $X$  for $E_\calF G$ of dimension $\max\{\gdvc(Y_y)+1,\gdvc(Y_P)| y\in \edge Y, P\in \vertex Y \}$. Now using Proposition \ref{push-out:acyl} we can attach $2$-cells to $X$ in order to obtain a model for $\evc G$ of dimension $\max\{2,\gdvc(Y_y)+1,\gdvc(Y_P)| y\in \edge Y, P\in \vertex Y \}$, therefore this is an upper bound for $\gdvc(G)$.
\end{proof}

As a quick application, we now explain how to reduce our analysis of the virtually cyclic geometric dimension of a $3$-manifold group to the prime case.

\begin{theorem}\label{reduction:irreducible}
Let $M$ be a closed, orientable, connected $3$-manifold. Consider the prime decomposition $M=P_1\# \cdots \# P_k$. Denote $\Gamma=\pi_1(M)$, $\Gamma_i=\pi_1(P_i)$. Then
\[
\max\{\gdvc(\Gamma_i)|1\leq i\leq k\}\leq \gdvc(\Gamma) \leq \max\{2, \gdvc(\Gamma_i) | 1\leq i\leq k\}.
\]
\end{theorem}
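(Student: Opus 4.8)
The plan is to exploit the fact that the prime decomposition $M = P_1 \# \cdots \# P_k$ realizes $\Gamma = \pi_1(M)$ as the fundamental group of a graph of groups with trivial edge groups and vertex groups $\Gamma_i = \pi_1(P_i)$ (Remark \ref{prime:geometric:splittings}). The lower bound is the easy half: each $\Gamma_i$ is a subgroup of $\Gamma$, so by part (1) of Lemma \ref{properties:gd} we get $\gd_{\vcyc \cap \Gamma_i}(\Gamma_i) \leq \gdvc(\Gamma)$; since $\vcyc \cap \Gamma_i$ is exactly the family of virtually cyclic subgroups of $\Gamma_i$, this reads $\gdvc(\Gamma_i) \leq \gdvc(\Gamma)$ for each $i$, hence $\max_i \gdvc(\Gamma_i) \leq \gdvc(\Gamma)$.

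For the upper bound, the first thing I would do is verify that the splitting coming from the prime decomposition is acylindrical in the sense of Definition \ref{acylindrical:def}. Since all edge groups are trivial, the stabilizer of any edge of the Bass--Serre tree $T$ is trivial; a fortiori the stabilizer of any path of length $k = 1$ is trivial, so the splitting is acylindrical (with constant $k = 1$). With this in hand, Corollary \ref{vcgd:acyl} applies directly and gives
\[
\gdvc(\Gamma) \leq \max\{2,\ \gdvc(Y_y) + 1,\ \gdvc(Y_P) \mid y \in \edge Y,\ P \in \vertex Y\}.
\]
The vertex groups $Y_P$ are precisely the $\Gamma_i$, and every edge group $Y_y$ is the trivial group, for which $\gdvc(Y_y) = 0$ and hence $\gdvc(Y_y) + 1 = 1 \leq 2$. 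So the edge terms are absorbed into the constant $2$, and the bound collapses to $\gdvc(\Gamma) \leq \max\{2,\ \gdvc(\Gamma_i) \mid 1 \leq i \leq k\}$, as desired.

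One point deserving care is the case where the graph of groups is not finite, or rather the question of whether $\Gamma$ is finitely generated so that the push-out propositions apply; but $M$ is a closed $3$-manifold, so $\Gamma$ is finitely presented and the prime decomposition has finitely many factors, so there is no difficulty. A second, more genuine subtlety: Corollary \ref{vcgd:acyl} is stated for a graph of groups giving an acylindrical splitting, and one should make sure the degenerate cases ($k = 1$, i.e. $M$ already prime, or $M = S^3$ with $\Gamma$ trivial) are not vacuously excluded — but for $k = 1$ the statement is the tautology $\gdvc(\Gamma_1) \leq \gdvc(\Gamma) \leq \max\{2, \gdvc(\Gamma_1)\}$, which is immediate. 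The only real content is the observation that trivial edge groups make the splitting acylindrical and make the "$+1$" edge contributions harmless; everything else is a direct invocation of Lemma \ref{properties:gd}(1) and Corollary \ref{vcgd:acyl}. I do not anticipate a serious obstacle.
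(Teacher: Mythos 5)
Your proof is correct and follows essentially the same route as the paper: the lower bound via Lemma \ref{properties:gd}(1), and the upper bound by observing that the trivial edge groups of the prime-decomposition splitting make it acylindrical (with $k=1$) so that Corollary \ref{vcgd:acyl} applies, with the edge terms absorbed into the constant $2$. No gaps.
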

\begin{proof}
Since each $\Gamma_i$ is a subgroup of $\Gamma$, the first inequality comes from Lemma \ref{properties:gd}.
On the other hand, the prime decomposition of $M$ determines a graph of spaces with fundamental group $\Gamma$ and hence a graph of groups $\mathbf{Y}$ (see Remark \ref{prime:geometric:splittings}). Since the edge groups are all isomorphic to $\pi_1(S^2)=1$, stabilizers of the edges in the Bass-Serre tree $T$ of $\mathbf{Y}$ are trivial. Thus the splitting of $\Gamma$ is acylindrical (with $k=1$ in Definition \ref{acylindrical:def}). The conclusion now follows from Corollary \ref{vcgd:acyl}. 
\end{proof}

The reader might naturally wonder whether a similar reduction can be performed with the JSJ decomposition. This is indeed the
case, but acylindricity of the splitting is much more subtle in that case (and does not always hold). We will discuss this is detail in 
Section \ref{sec-reduction}.


\section{The Seifert fibered case}\label{sec-Seifert-fibered}

In this section, we will study the geometric dimension of the fundamental groups of compact Seifert
fibered manifolds, both in the case where they have toral boundary components (e.g. pieces in the
JSJ decomposition of a prime $3$-manifold) and the case where they have no boundary 
(e.g. are themselves prime $3$-manifolds). 

\subsection{Seifert fibered manifolds without boundary}

Using the base orbifold $B$ of a Seifert fibered manifold, we have the following classification

\begin{itemize}
    \item $B$ is a bad orbifold;
    \item $B$ is a good orbifold, modeled on either $S^2$, $\dbH^2$ or $\dbE^2$.
\end{itemize}

The following proposition deals with the case where $B$ is a bad manifold, or is a good manifold modeled on $S^2$.

\begin{proposition}\label{bad:orb:case}
Let $M$ be a closed Seifert fiber space with base orbifold $B$ and fundamental
group $\Gamma$. Assume that $B$ is either a bad orbifold, or a good orbifold modeled on $S^2$. Then $\Gamma$ is virtually cyclic. In particular, $\gdvcgamma=0$
\end{proposition}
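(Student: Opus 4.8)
The plan is to read off the structure of $\Gamma$ from the short exact sequence of Lemma~\ref{orbifold SES} and reduce everything to the finiteness of the orbifold fundamental group of the base. Write $1\to K\to\Gamma\to\Gamma_0\to 1$, where $K$ is the cyclic subgroup generated by a regular fiber and $\Gamma_0$ is the orbifold fundamental group of $B$. The key observation is that in both cases of the hypothesis $\Gamma_0$ is \emph{finite}. If $B$ is a good $2$-orbifold modeled on $S^2$, then $B=S^2/G$ for a finite group $G$ of isometries of $S^2$, so $\Gamma_0\cong G$ is finite. If $B$ is a bad closed $2$-orbifold, I would invoke the classification in \cite[Theorem~2.3]{Sc83}: the bad closed $2$-orbifolds are the teardrops $S^2(p)$, the spindles $S^2(p,q)$ with $p\neq q$, and their reflector (disk) analogues. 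For each of these one computes $\Gamma_0$ directly from the standard presentation $\langle x_1,\dots,x_k \mid x_i^{p_i}=1,\ x_1\cdots x_k=1\rangle$ (adding reflection generators in the disk cases, or equivalently passing to the orientation double cover, which is a teardrop or a spindle): $\pi_1^{\mathrm{orb}}(S^2(p))$ is trivial, $\pi_1^{\mathrm{orb}}(S^2(p,q))\cong\dbZ/\gcd(p,q)$, and the disk analogues acquire only an extra factor of $2$. In every case $\Gamma_0$ is finite.

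Granting this, the rest is formal. The subgroup $K$ is cyclic (infinite or finite) and has index $|\Gamma_0|<\infty$ in $\Gamma$, so $\Gamma$ is virtually cyclic by definition. Finally, since $\Gamma\in\vcyc$ and $\vcyc$ is closed under passing to subgroups, every subgroup of $\Gamma$ lies in $\vcyc$; hence the one-point $\Gamma$-CW-complex $\{*\}$ (with trivial action) satisfies $\{*\}^H=\{*\}$ for all $H\leq\Gamma$, and is therefore a model for $\evc\Gamma$. Thus $\gdvcgamma=0$.

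The only step that is not completely routine is the finiteness of $\Gamma_0$ when $B$ is bad, and even this is just bookkeeping once the list of bad $2$-orbifolds from \cite[Theorem~2.3]{Sc83} is in hand, since each group on that list is trivial, cyclic, or a bounded dihedral-type group. I would therefore expect no serious obstacle: the proposition is essentially Lemma~\ref{orbifold SES} together with the remark that spherical and bad base orbifolds have finite orbifold fundamental groups.
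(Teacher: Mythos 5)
Your proof is correct, and for the case where $B$ is a good orbifold modeled on $S^2$ it coincides with the paper's argument (finiteness of $\Gamma_0$ as a discrete subgroup of $\isom(S^2)$, then Lemma~\ref{orbifold SES}). For the bad-orbifold case you take a genuinely different route. The paper does not compute $\Gamma_0$ at all there: it observes that bad closed $2$-orbifolds have positive orbifold Euler characteristic, invokes \cite[Theorem~5.3(ii)]{Sc83} to conclude that $M$ is modeled on $S^3$ or $S^2\times\mbe$, and then notes that discrete subgroups of $\isom(S^3)\cong SO(4)$ and of $\isom(S^2\times\mbe)\cong SO(3)\times(\mbr\rtimes\dbZ/2)$ are virtually cyclic. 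You instead read the finiteness of $\Gamma_0$ directly off the list of bad orbifolds in \cite[Theorem~2.3]{Sc83} (teardrops have trivial orbifold fundamental group, spindles give $\dbZ/\gcd(p,q)$, the reflector analogues a further index-$2$ extension) and then apply the short exact sequence uniformly in both cases: $K$ cyclic of finite index $|\Gamma_0|$ forces $\Gamma$ virtually cyclic. Your version is more elementary and self-contained, avoiding the geometrization input; the paper's version has the side benefit of identifying the geometry of $M$ ($S^3$ or $S^2\times\mbe$), which is exactly the information recorded in Corollary~\ref{main:corollary} and Table~\ref{tabla}. Your concluding observation that $\gd_{\calF}(\Gamma)=0$ iff $\Gamma\in\calF$, witnessed by the one-point model, is also the paper's (it appears in the proof of Statement (1) of the main theorem). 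No gaps.
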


\begin{proof}
Suppose first that $B$ is modeled on $S^2$.  Then
$\Gamma_0=\pi_1(B)$ is a discrete subgroup of $SO(3)\cong\isom(S^2)$ and is
therefore finite. By the short exact sequence given in Lemma \ref{orbifold SES},
$\Gamma$ is virtually cyclic.

From the classification of bad manifolds (see \cite[Theorem~2.3]{Sc83}) we know that if $B$ is a bad manifold then its  orbifold Euler characteristic is positive. So by \cite[Theorem 5.3(ii)]{Sc83} $M$ is modeled on one of
$S^3$ or $S^2\times\mbe$. Now we conclude by observing that discrete subgroups in either $\isom(S^3) \cong SO(4)$ or
$\isom(S^2\times\mbe) \cong SO(3)\times (\mbr\rtimes\mbz_2)$ are virtually cyclic.
\end{proof}

Now we have to deal with the case where $B$ is a good orbifold modeled on $\dbH^2$ or $\dbE^2$.

\begin{proposition}
\label{hyp B model}
Let $M$ be a closed Seifert fiber space with base orbifold $B$ modeled on
$\hyp$.  Let $\Gamma=\pi_1(M)$ and $\Gamma_0=\pi_1(B)$ be the respective
fundamental groups.  Let $\mc{A}$ be the collection of maximal infinite virtually cyclic
subgroups of $\Gamma_0$, let $\widetilde{\mc{A}}$ be the collection of preimages
of $\mc{A}$ in $\Gamma$, and let $\mc{H}$ be a set of representatives of
conjugacy classes in $\widetilde{\mc{A}}$. Consider the following
cellular $\Gamma$-push-out:

\[\xymatrix{\coprod_{\tilde{H}\in\calH}\Gamma\times_{\tilde{H}}\dbE \ar[d] \ar[r]  & \dbH^2 \ar[d]\\
\coprod_{\tilde{H}\in\calH}\Gamma\times_{\tilde{H}}\evc\tilde{H}\ar[r] & X
}
\]
Then all $\widetilde{H}\in\mc{H}$
are virtually 2-crystallographic, and $X$ is a model for $\evc\Gamma$.  In the above cellular
$\Gamma$-push-out, we require either (1) the left vertical map is the disjoint union of
cellular $\widetilde{H}$-maps ($\widetilde{H}\in\mc{H}$), the upper horizontal map is an
inclusion of $\Gamma$-CW-complexes, or (2) the left vertical map is the disjoint union of
inclusions of $\widetilde{H}$-CW-complexes ($\widetilde{H}\in\mc{H}$), the upper horizontal map
is a cellular $\Gamma$-map.
\end{proposition}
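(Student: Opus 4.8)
The plan is to realize the displayed square as an instance of the push-out criterion of Proposition \ref{LO-push-out} (in the spirit of Corollary \ref{push-out:cyclic:kernel}) applied to the short exact sequence $1\to K\to\Gamma\xrightarrow{\ \varphi\ }\Gamma_0\to 1$ of Lemma \ref{orbifold SES}; here $K$ is cyclic by that lemma and infinite because $B$ is hyperbolic, so $M$ is not covered by $S^3$. First I would pin down the models and families. Since $\Gamma_0<\isom(\hyp)$ is discrete and cocompact it acts geometrically on $\hyp$, so $\hyp$ is a model for $\efin\Gamma_0$; restricting along $\varphi$, $\hyp$ becomes a model for $E_{\calF}\Gamma$ with $\calF:=\varphi^{-1}(\fin_0)$, because $(\hyp)^H=(\hyp)^{\varphi(H)}$ is nonempty and convex (hence contractible) precisely when $\varphi(H)$ is a finite subgroup of the discrete group $\Gamma_0$, and empty otherwise. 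For $A\in\mc{A}$ put $\widetilde{A}=\varphi^{-1}(A)$. The extension $1\to K\to\widetilde{A}\to A\to1$ with $K\cong\dbZ$ and $A$ infinite virtually cyclic shows $\widetilde{A}$ contains a $\dbZ$-by-$\dbZ$ subgroup of finite index and is not two-ended, so it is virtually $2$-crystallographic; and, $A$ being infinite virtually cyclic, $\dbE=\mbr$ with the standard action of $A$ through its canonical surjection onto $\dbZ$ (or onto the infinite dihedral group) is a model for $\efin A$, which pulls back along $\widetilde{A}\twoheadrightarrow A$ to a model for $E_{\calF\cap\widetilde{A}}\widetilde{A}$, since $\calF\cap\widetilde{A}=\{V\leq\widetilde{A}:\varphi(V)\text{ finite}\}$ is the preimage of the finite family of $A$. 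Finally $\evc\widetilde{A}=E_{\vcyc}\widetilde{A}$ because the virtually cyclic subgroups of $\Gamma$ contained in $\widetilde A$ are exactly the virtually cyclic subgroups of $\widetilde A$.

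Next I would verify that $\widetilde{\mc{A}}$ (equivalently $\mc{H}$) is adapted to the pair $(\calF,\vcyc)$. As a cocompact Fuchsian group, $\Gamma_0$ is word hyperbolic with no parabolic elements, so every element of infinite order is hyperbolic, and the collection $\mc{A}$ of maximal infinite virtually cyclic subgroups of $\Gamma_0$ is adapted to $(\fin_0,\vcyc_0)$: two distinct members meet in a finite group (an infinite-order element of the intersection would force the two members to coincide), the collection is conjugacy closed, each member $A=E_{\Gamma_0}(g)$ is the stabilizer of the unordered pair of fixed points of $g$ on $\partial\hyp$ and is therefore self-normalizing, and every infinite virtually cyclic subgroup lies in a unique member. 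By Lemma \ref{adapted:lemma2}, $\widetilde{\mc{A}}=\{\varphi^{-1}(A)\}$ is adapted to $(\widetilde{\fin_0},\widetilde{\vcyc_0})$; and since $\widetilde{\fin_0}=\calF\subseteq\vcyc\subseteq\widetilde{\vcyc_0}$ — a subgroup with finite image in $\Gamma_0$ is virtually contained in $K\cong\dbZ$, hence virtually cyclic, while a virtually cyclic subgroup has virtually cyclic image — Lemma \ref{adapted:lemma1} gives that $\widetilde{\mc{A}}$ is adapted to $(\calF,\vcyc)$.

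With all of this in place, Proposition \ref{LO-push-out} for the pair $(\calF,\vcyc)$ and the adapted collection $\widetilde{\mc{A}}$, using the substitutions $\hyp=E_{\calF}\Gamma$, $\dbE=E_{\calF}\widetilde{H}$ and $\evc\widetilde{H}=E_{\vcyc}\widetilde{H}$ from the previous paragraphs, identifies the displayed $\Gamma$-push-out as a model for $E_{\vcyc}\Gamma=\evc\Gamma$; the two cellularity/inclusion alternatives in the statement are arranged exactly as in Proposition \ref{LO-push-out}. I expect the steps requiring the most care to be (a) the verification that $\mc{A}$ is an adapted collection for $\Gamma_0$ — especially the self-normalizing condition, which is where cocompactness of the Fuchsian group (absence of parabolics) is used — and (b) the bookkeeping identifying $\dbE=\mbr$, rather than a $2$-dimensional complex, as the correct model for $E_{\calF\cap\widetilde{H}}\widetilde{H}$, the point being that the family $\calF\cap\widetilde{H}$ contains the vertical $\dbZ$; it is this gain of a dimension that will later yield the sharp bound $\gdvc(\Gamma)\le 3$ in this case.
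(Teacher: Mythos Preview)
Your proof is correct and follows essentially the same route as the paper: both use the short exact sequence of Lemma~\ref{orbifold SES}, identify $\hyp=\efin\Gamma_0$ and $\dbE=\efin H$ for $H\in\mc{A}$, show $\mc{A}$ is adapted to $(\fin_0,\vcyc_0)$ (the paper cites \cite[Theorem~2.6]{LO07} rather than verifying the four conditions by hand), and then invoke the push-out of Corollary~\ref{push-out:cyclic:kernel}/Proposition~\ref{LO-push-out}. The only notable difference is that for the structure of $\widetilde{H}$ the paper argues geometrically---lifting the stabilized geodesic $c\simeq\dbE$ to an $\dbE^2$ in $\widetilde{M}$ and showing the kernel of the $\widetilde{H}$-action is finite, so $\widetilde{H}$ is finite-by-(2-crystallographic)---whereas you use the purely algebraic observation that a $\dbZ$-by-(infinite virtually cyclic) group is virtually $\dbZ$-by-$\dbZ$; your conclusion is literally what the proposition asserts, while the paper's slightly stronger form is what gets used in the proof of Proposition~\ref{empty:bdary:hyp:orb}.
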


\begin{proposition}\label{empty:bdary:hyp:orb}
Let $M$ be a closed Seifert fibered manifold with base orbifold modeled on $\dbH^2$, 
and let $\Gamma=\pi_1(M)$ be the fundamental group of $M$. Then $\gdvc(\Gamma)=3$.
\end{proposition}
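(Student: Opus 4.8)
The plan is to prove the two inequalities $\gdvc(\Gamma)\ge 3$ and $\gdvc(\Gamma)\le 3$ separately, using the short exact sequence of Lemma \ref{orbifold SES} together with the push-out construction of Proposition \ref{hyp B model}.

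For the lower bound, I would first note that since $B$ is a closed hyperbolic $2$-orbifold, its orbifold fundamental group $\Gamma_0$ is an infinite (cocompact Fuchsian) group and therefore contains an infinite cyclic subgroup $\langle t\rangle$. Because $B$ is hyperbolic, $M$ is not covered by $S^3$, so by Lemma \ref{orbifold SES} the fiber subgroup $K$ is infinite cyclic, and the preimage $H\le\Gamma$ of $\langle t\rangle$ under $\Gamma\to\Gamma_0$ fits into a short exact sequence $1\to\dbZ\to H\to\dbZ\to 1$. Every extension with quotient $\dbZ$ splits, so $H\cong\dbZ^2$ or $H$ is the Klein bottle group $\dbZ\rtimes\dbZ$; in either case $H$ is a (torsion-free) $2$-crystallographic group, so $\gdvc(H)=3$ by Lemma \ref{properties:gd}(3), and hence $\gdvc(\Gamma)\ge\gdvc(H)=3$ by Lemma \ref{properties:gd}(1).

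For the upper bound, I would apply Proposition \ref{hyp B model} directly: it exhibits a model $X$ for $\evc\Gamma$ as the $\Gamma$-push-out of $\hyp$ with copies of $\Gamma\times_{\widetilde H}\dbE$ and $\Gamma\times_{\widetilde H}\evc\widetilde H$, $\widetilde H\in\mc{H}$, where each $\widetilde H$ is virtually $2$-crystallographic. Choosing minimal (hence at most $3$-dimensional) models for the spaces $\evc\widetilde H$ and arranging the attaching maps cellularly, the new cells attached to $\hyp$ all come from the $\evc\widetilde H$ factors, so the push-out has dimension $\max\{\dim\hyp,\ \dim\dbE,\ \gdvc(\widetilde H)\}\le\max\{2,1,3\}=3$. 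Therefore $\gdvc(\Gamma)\le 3$, and combining with the lower bound gives $\gdvc(\Gamma)=3$.

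The step I expect to be the main obstacle is supplying the input $\gdvc(\widetilde H)\le 3$ for the virtually $2$-crystallographic groups $\widetilde H$ produced by Proposition \ref{hyp B model}: Lemma \ref{properties:gd}(3) covers only honest $n$-crystallographic groups, so this requires the analogous bound for virtually crystallographic groups (those $\widetilde H$ that happen to be virtually cyclic contribute only a point and cause no trouble), which I would take from \cite{CFH06} or re-derive. The remaining care is purely bookkeeping: one must track which of the two variants (1), (2) of Proposition \ref{hyp B model} is used, but in both cases the copies of $\dbE$ embed as subcomplexes of $\hyp$ (as disjoint geodesic lines) or of the $\evc\widetilde H$, so no dimension above $3$ is ever created.
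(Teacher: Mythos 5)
Your proposal is correct and follows essentially the same route as the paper: the upper bound comes from the $3$-dimensional push-out model of Proposition \ref{hyp B model} (where the needed input $\gdvc(\widetilde H)\leq 3$ for the virtually $2$-crystallographic groups $\widetilde H$ is already supplied there via the finite normal subgroup with $2$-crystallographic quotient and \cite{CFH06}), and the lower bound comes from exhibiting a $2$-crystallographic subgroup of $\Gamma$ and applying Lemma \ref{properties:gd}.
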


\begin{proof}[Proof of Proposition \ref{hyp B model}]
We have the 
short exact sequence 
\[1\to\dbZ\to \Gamma \to \Gamma_0 \to 1\]
from Lemma \ref{orbifold SES}. Then $\Gamma_0$ is a lattice in $\mathrm{Isom}(\dbH^2)\cong \mathrm{PSL}_2(\dbR)\rtimes \dbZ/2$,
hence is hyperbolic.
Let $\vcyc_0$ and $\fin_0$ be the families of virtually cyclic subgroups and finite 
subgroups of $\Gamma_0$ respectively. Applying \cite[Theorem 2.6]{LO07}, we see that the collection $\mc{A}$ of \emph{maximal}
infinite virtually cyclic subgroups of $\Gamma_0$ is adapted to the pair
$(\fin_0,\vcyc_0)$.

This allows us
to apply the construction of Corollary \ref{push-out:cyclic:kernel}.  Since $\Gamma_0$ is a lattice in $\mathrm{Iso(\hyp)}$, we have that
$\hyp$ is a model for $\efin\Gamma_0$.  Let $H\in\mc{A}$. Therefore $H$ is the stabilizer of a unique
geodesic $c$ in $\hyp$.  The subgroup of $H$ that fixes $c$ is finite and
normal; the quotient of $H$ by this group inherits an effective action on
$c\simeq\mbe$, and is therefore 1-crystallographic. This gives $\mbe$ as a model
for $\efin H$. 

Consider $\widetilde{H}\in\mc{H}$,  $H=\phi(\widetilde{H})$ and its action on $\hyp$ (on which $B$ is
modeled).  As $H$ is virtually cyclic, we know it stabilizes a unique
geodesic $c$, hence $H$ has a natural action on $\mbe$ as $c\simeq\mbe$. We now consider the preimage of this copy of $\mbe$ in
the lift of the Seifert fiber space $M$ to its universal cover.  By
Lemma \ref{orbifold SES}, $\Gamma\to\Gamma_0$ is infinite cyclic unless $M$ is
modeled on $S^3$, but this will contradict \cite[Theorem~5.3]{Sc83};
lifting $\mbe$ to $\widetilde{M}$ we then get a $\widetilde{H}$-action on
$\mbe^2$. Let $\widetilde{F}\leq\widetilde{H}$ be the subgroup with trivial
action on $\mbe^2$. Then $F=\phi(\widetilde{F})$ is a subgroup $F\leq H$, which cannot contain the hyperbolic element that generates the finite-index
infinite cyclic subgroup of $H$, so must be finite.  Since $\ker\phi\cong\mbz$ acts non-trivially on the fiber direction, $\widetilde{F}\cong F$ must also be finite. Letting
$Q=\widetilde{H}/\widetilde{F}$, we get that $Q$ is 2-crystallographic, 
as it
inherits an effective cocompact action on $\mbe^2$. In particular the model for $\evc Q$ given by \cite{CFH06} will provide a $3$-dimensional model for $\evc \tilde{H}$.
\end{proof}

\begin{proof}[Proof of Proposition \ref{empty:bdary:hyp:orb}]
The model constructed is three dimensional, as $\evc\widetilde{H}$ and
$\mbe^2\times [0,1]$ are both three dimensional; this gives that
$\gdvc(\Gamma)\leq 3$. Since $\Gamma$ has a subgroup isomorphic
to $\mbz^2$ (consider any $\widetilde{H}\in\mc{H}$), the result follows from Lemma \ref{properties:gd}.

\end{proof}

\begin{proposition}\label{empty:bdary:flat:orb}
Let $M$ be a closed Seifert fibered manifold with base orbifold modeled on $\dbE^2$, and let $\Gamma=\pi_1(M)$ be the fundamental group of $M$. Then 
\begin{itemize}
    \item $M$ is modeled on $\dbE^3$, and $\gdvc(\Gamma)=4$; or
    \item $M$ is modeled on $\nil$, and $\gdvc(\Gamma)=3$.
\end{itemize}
\end{proposition}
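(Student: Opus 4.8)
The plan is first to identify the possible geometries, then to treat the two cases separately. If the base orbifold $B$ of the closed Seifert fibered manifold $M$ is modeled on $\dbE^2$, then its orbifold Euler characteristic vanishes, so by Scott's classification of Seifert geometries \cite[Theorem~5.3]{Sc83} the manifold $M$ is modeled on $\dbE^3$ when the Euler number of the fibration is zero and on $\nil$ otherwise; this is the asserted dichotomy. In the $\dbE^3$ case $\Gamma$ acts freely, properly discontinuously and cocompactly by isometries on $\dbE^3$, hence is a torsion-free $3$-crystallographic (Bieberbach) group, and Lemma~\ref{properties:gd}(3) gives $\gdvc(\Gamma)=3+1=4$ immediately.

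In the $\nil$ case I would use the extension $1\to K\to\Gamma\xrightarrow{\varphi}\Gamma_0\to1$ of Lemma~\ref{orbifold SES}, where $K\cong\dbZ$ is generated by a regular fiber and $\Gamma_0=\pi_1^{\mathrm{orb}}(B)$ is a $2$-dimensional crystallographic group, so that $\efin\Gamma_0=\dbE^2$ is a $2$-dimensional model. The lower bound is easy: for any infinite cyclic $C\le\Gamma_0$ the preimage $\varphi^{-1}(C)$ is a $\dbZ$-by-$\dbZ$ group, hence contains a copy of $\dbZ^2$, so $\gdvc(\Gamma)\ge\gdvc(\dbZ^2)=3$ by parts (1) and (3) of Lemma~\ref{properties:gd}. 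For the upper bound $\gdvc(\Gamma)\le3$ the goal is a $3$-dimensional model for $\evc\Gamma$, obtained from the push-out constructions of Section~\ref{sec-push-out} in two stages. The first stage is robust: since $K$ is normal it is commensurated, and the $\varphi$-pullback of $\efin\Gamma_0$ is a $2$-dimensional model for $E_{\fin\cup[K]}\Gamma$, so a single push-out collapses the $3$-dimensional model $\nil\cong\dbR^3$ for $\efin\Gamma$ onto this $2$-dimensional space. In the second stage one attaches, for each commensurability class of infinite virtually cyclic subgroups $V\le\Gamma$ not commensurable with $K$, the piece governed by the commensurator $N_\Gamma[V]$ — following the philosophy of Corollary~\ref{push-out:cyclic:kernel}, which realizes exactly this when $\Gamma_0$ admits a suitable adapted collection (for instance of stabilizers of rational lines in $\dbE^2$). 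The crucial numerical facts are that each such $N_\Gamma[V]$ is virtually $\dbZ^2$ — this is precisely where one uses that $\Gamma$ contains no $\dbZ^3$ — and that it contains $K$, so the subspace along which the piece is glued is only $1$-dimensional; the piece itself can then be arranged to be $3$-dimensional, and the push-out has dimension $\max\{2,3\}=3$. With the lower bound this yields $\gdvc(\Gamma)=3$.

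I expect the main obstacle to lie in the second stage, specifically in organizing it within the available push-out machinery when $\Gamma_0$ has little point-group symmetry. For a finite list of base orbifolds — among them the torus and the Klein bottle — every natural candidate collection in $\Gamma_0$ fails to be self-normalizing (the offending feature being a whole pencil of maximal cyclic subgroups, such as those of a $\dbZ^2\le\Gamma_0$, none self-normalizing), so Corollary~\ref{push-out:cyclic:kernel} gives nothing and one must instead carry out the attachment by hand on $\Gamma$. There one has to verify directly that every maximal infinite virtually cyclic subgroup of $\Gamma$ is either commensurable with $K$ or has commensurator virtually $\dbZ^2$, and that the pieces can be assembled so that the resulting complex is $3$- and not $4$-dimensional — the latter being exactly the point at which this computation parts ways with the $\dbZ^3$ computation. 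Getting this bookkeeping right is the heart of the matter.
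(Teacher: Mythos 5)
Your identification of the dichotomy via \cite[Theorem 5.3]{Sc83}, your treatment of the $\dbE^3$ case, and your lower bound in the $\nil$ case all agree with the paper. For the upper bound in the $\nil$ case the two arguments rest on the same computation but are packaged differently. You propose to assemble $\evc\Gamma$ by hand in two stages: first pass to the $2$-dimensional pullback of $\efin\Gamma_0=\dbE^2$ (classifying the family of subgroups commensurable into the fiber subgroup $K$), then attach one piece per commensurability class of infinite cyclic subgroups not commensurable with $K$, using that each commensurator $N_\Gamma[V]$ is virtually $\dbZ^2$ and contains $K$. The paper instead observes that $\Gamma$ is virtually poly-$\dbZ$ and invokes \cite[Theorem~5.13, case~2b]{LW12}, whose two hypotheses are verified by exactly the computation you have in mind: realizing $\Gamma$ (up to finite index) as a lattice in the Heisenberg group, the normalizer --- indeed the commensurator --- of a non-central infinite cyclic $D$ generated by $(a,b,c)$ is constrained by $ay-bx=0$ to lie in a closed subgroup $H\cong\dbR^2$, hence is rank-two abelian, of infinite index, and contains the center. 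What the citation buys is precisely the step you defer as \emph{bookkeeping}: the L\"uck--Weiermann push-out is indexed by commensurators and does not require the adapted-collection axioms of Proposition \ref{LO-push-out}, so the failure of self-normalization you correctly flag (conjugation in the Heisenberg group fixes the first two coordinates, so $N_\Gamma(N_\Gamma[V])=\Gamma$) is simply not an obstruction in that framework.

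As written, then, your proposal has a real but localized gap: the second-stage attachment is only sketched, and you explicitly leave open whether it can be organized into a $3$-dimensional model. That assembly is exactly the content of the general commensurator-based push-out of \cite{LW12} and its specialization \cite[Theorem~5.13]{LW12}; once either is cited, your verification of the commensurator structure finishes the proof. Two minor cautions: in the infra-nil case $\Gamma$ need only be \emph{virtually} a lattice in $\nil$, so the Heisenberg computation should be run on a finite-index subgroup and the conclusions transported back; and the first-stage remark about "collapsing" $\efin\Gamma\cong\dbR^3$ onto the $2$-dimensional pullback is unnecessary --- the pullback of $\dbE^2$ is already the classifying space you need for the intermediate family.
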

\begin{proof}
From \cite[Theorem 5.3(ii)]{Sc83} we know that $M$ is modeled either on $\dbE^3$ 
or on $\nil$. In the former case we have that $\Gamma$ is $3$-crystallographic, 
and by \cite{CFH06} we have that $\gdvcgamma=4$.

For the case where $M$ is modeled on $\nil$, we would like to use \cite[Theorem~5.13]{LW12}. 
For this we will first prove that $\Gamma$ is virtually poly-$\dbZ$, and then check that 
$\Gamma$ satisfies \cite[Theorem~5.13~case~2b]{LW12}, so that $\gdvcgamma=3$. The two 
conditions to check are:

\begin{enumerate}
\item There is an infinite normal subgroup $C\subseteq\Gamma$, and for every infinite cyclic 
subgroup $D\subseteq \Gamma$ with $[\Gamma: N_\Gamma D]<\infty $ we have $C\cap D\neq 1$.
\item There exists no subgroup $W\leq \Gamma$ such that its commensurator $N_\Gamma[W]$ has 
virtual cohomological dimension equal to $1$.
\end{enumerate}
From Lemma \ref{orbifold SES} we have the short exact sequence
\[
1\to \dbZ \to \Gamma \to \Gamma_0 \to 1
\]
where $\Gamma_0$ is the orbifold fundamental group of $B$, which is $2$-crystallographic by 
hypotheses, in particular it is virtually poly-$\dbZ$ with a filtration of the form 
$1\subseteq \dbZ \subseteq \dbZ^2 \subseteq \Gamma_0$. Since the property of being virtually 
poly-$\dbZ$ is closed under taking extensions (see \cite[Lemma~5.14, i-iv]{LW12}) we conclude 
that $\Gamma$ is virtually poly-$\dbZ$.

$\nil$ is the continuous Heisenberg Lie group, and can be
identified with the group $\mbr^3$ with multiplication given by
$(a,b,c)\cdot(d,e,f)=(a+d,b+e,c+f-ae)$. The center of $\nil$ is the subgroup
$\{(0,0,z): z\in\mbr\}$.  It is a simple matter to compute conjugates and
positive powers:
\[(x,y,z)^{-1}\cdot (a,b,c)\cdot (x,y,z)=(a, b, ay-bx+c);\]
\[(a,b,c)^n = \left(na, nb, nc-\frac{n(n-1)}{2}ab\right)\textrm{ for }n>0.\]

\vskip 10pt

We now verify property (1). Let  $C\subseteq\Gamma$ be
the center $Z(\Gamma)$ of $\Gamma$. We first point out that $C$ is infinite cyclic. Indeed, the group $\Gamma$ can be
viewed as a lattice in $\nil$, which implies $C=\Gamma\cap Z(\nil)$ is a discrete cocompact subgroup in $Z(\nil) \cong \dbR$.

Let $D$ be an infinite cyclic subgroup of $\Gamma$, generated by the element
$(a,b,c)\in\nil$. If $(x,y,z)\in N_{\Gamma}D$,
then we must have for some $n$:
\[(x,y,z)^{-1}(a,b,c)(x,y,z)=(a,b,c)^n.\]
So if $(x,y,z)\in N_{\Gamma}D$ then from the formulas above we see that  $n=1$ and $ay-bx=0$.

\vskip 5pt

Without loss of generality suppose $a\neq 0$.  Then for
$(x,y,z)$ to be a normalizer of $D$, we need $ay-bx=0$, or $y=\frac{b}{a}x$. 
Thus, we can consider the closed Lie subgroup 
\[H:=\left\{\left(\alpha,\frac{b}{a}\alpha,\beta\right)\ \bigg|\
\alpha,\beta\in\mbr\right\}\]
of $\nil$, and observe that $H$ is isomorphic to $\mathbb{R}^2$. Since $N_{\Gamma}D=H\cap\Gamma$, 
we see that $N_{\Gamma}D$ can be viewed as a discrete subgroup of 
$H\cong \mathbb{R}^2$, which forces $\vcd(N_{\Gamma}D)\leq 2$.
Since $\vcd(\Gamma)=3$, we conclude that $[\Gamma:N_{\Gamma}D]=\infty$.  Thus, for
any infinite cyclic $D\leq\Gamma$, either $Z(\Gamma)\cap D\neq\{1\}$ or
$[\Gamma:N_{\Gamma}D]=\infty$; in particular, $\Gamma$ satisfies
condition (1) above.

Finally, since $\vcd(\Gamma)=3$, and $N\leq N_\Gamma D\leq N_\Gamma[D]$ and $\vcd (N)=2$ we verify condition (2) above, completing the proof.
\end{proof}


\subsection{Seifert fibered manifolds with boundary}
In this section we study compact Seifert fibered manifolds with non-empty boundary. Throughout this section, $M$ will always be a compact Seifert fibered 
manifold with nonempty boundary. Let $\Gamma=\pi_1(M)$ be the fundamental group, 
let $\Gamma_0=\pi_1^\mathrm{orb}(B)$ be the orbifold fundamental group of the 
base orbifold $B$, and let $\phi:\Gamma\to \Gamma_0$ be the associated homomorphism.

First we will see that we do not have to consider the case of $B$ being a bad orbifold or a good orbifold modeled on $S^2$.

\begin{lemma}\label{impossible:compact:case}
Let $M$ be a compact Seifert fibered manifold with nonempty boundary with base orbifold $B$. Denote by $B^\circ$ be the interior of $B$. Then $B^{\circ}$ is a good orbifold modeled either on $\dbH^2$ or on $\dbE^2$.
\end{lemma}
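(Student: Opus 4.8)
The plan is to deduce the statement from the classification of closed $2$-orbifolds recalled in Section~\ref{sec-3mflds}, the only geometric input being that a Seifert fibration with nonempty boundary has a base orbifold with nonempty boundary. So the first step is to record this. Let $p\colon M\to B$ be the map collapsing each fiber to a point. Each component of $\partial M$ is vertical, i.e.\ a union of fibers, so $p$ maps it onto a boundary component of $B$; conversely the preimage of any boundary component of $B$ lies in $\partial M$, so that $p^{-1}(\partial B)=\partial M$. Since $\partial M\neq\emptyset$ by hypothesis, $B$ has nonempty boundary, and therefore $B^{\circ}=\operatorname{int}(B)$ is a noncompact $2$-orbifold without boundary.

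Next I would rule out both forbidden possibilities using this noncompactness. On the one hand, every bad $2$-orbifold occurs in the list of \cite[Theorem~2.3]{Sc83}, and every orbifold on that list is closed: its underlying space is $S^2$, or a disc whose entire boundary is reflecting. Hence $B^{\circ}$, being noncompact, cannot be bad, so it is good. On the other hand, a spherical $2$-orbifold is a quotient $S^2/G$ by a finite subgroup of $SO(3)$, and is in particular compact, so $B^{\circ}$ is not modeled on $S^2$. Being a good $2$-orbifold, $B^{\circ}$ has orbifold universal cover $S^2$ or $\dbR^2$; the former is impossible, since it would force $B^{\circ}=S^2/G$ to be compact. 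Thus the orbifold universal cover of $B^{\circ}$ is $\dbR^2$, and the uniformization of $2$-orbifolds recalled in Section~\ref{sec-3mflds} shows $B^{\circ}$ is modeled on $\dbE^2$ or on $\dbH^2$, as claimed.

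I expect the only delicate points to be the two endpoints of this argument. The first is justifying the implication $\partial M\neq\emptyset\Rightarrow\partial B\neq\emptyset$ from the local structure of a Seifert fibration near $\partial M$, rather than asserting it; this is standard but should be written carefully, and is where one uses that boundary components of a Seifert fibration are vertical. The second is invoking the uniformization statement in a form valid for the \emph{noncompact} orbifold $B^{\circ}$, since the version recalled earlier in the paper was phrased for closed $2$-orbifolds; one either extends that statement or cites the general uniformization of surfaces and $2$-orbifolds. Neither is a serious obstacle, but these are the two places where precision is needed.
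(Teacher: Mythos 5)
Your proof is correct and follows essentially the same route as the paper's: deduce that $B^{\circ}$ is a noncompact, boundaryless $2$-orbifold, rule out badness via the classification in \cite[Theorem~2.3]{Sc83}, and rule out the spherical geometry by compactness of quotients of $S^2$. The only cosmetic difference is that you work with the orbifold universal cover directly, whereas the paper passes to a finite manifold cover $N^{\circ}$ before invoking uniformization; your extra care with the step $\partial M\neq\emptyset\Rightarrow\partial B\neq\emptyset$ is a welcome addition that the paper leaves implicit.
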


\begin{proof}
By the classification of bad orbifolds (see \cite[Theorem~2.3]{Sc83}), the only 
bad orbifolds without boundary
have compact underlying space, so $B^{\circ}$ must be good, and therefore
finitely covered by a 2-manifold $N^{\circ}$ that is also not compact. Then
$N^{\circ}$ is geometric; by the uniformization theorem, all geometric surfaces
are modeled on $\mbs^2$, $\mbe^2$, or $\hyp$. Since  $\mbs^2$ is compact, 
all quotients by discrete (finite) actions are also compact, therefore $N^\circ$ 
cannot be modeled on this geometry. The lemma follows.
\end{proof}

\begin{proposition}\label{prop:bdry:flat:orb}
Let $M$ be a compact Seifert fibered manifold with nonempty boundary. Let $\Gamma=\pi_1(M)$, and let $B$ be the base orbifold of $M$. If $B^\circ$ is modeled on $\dbE^2$, then $\Gamma$ is $2$-crystallographic isomorphic to $\dbZ^2$ or $\dbZ\rtimes \dbZ$. In particular $\gdvc(\Gamma)=3$.
\end{proposition}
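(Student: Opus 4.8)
The plan is the standard two-step analysis of a Seifert fibered piece: first pin down the base orbifold $B$, then recover $\Gamma=\pi_1(M)$ from the fibration exact sequence of Lemma \ref{orbifold SES}. Since $M$ is orientable, $B$ is a compact $2$-orbifold with non-empty boundary whose only singularities are cone points, and by Lemma \ref{impossible:compact:case} its interior is good. Being modeled on $\dbE^2$ forces $\chi^{\mathrm{orb}}(B)=0$ --- the cases $\chi^{\mathrm{orb}}(B)>0$ and $<0$ giving good orbifolds modeled on $S^2$ and on $\dbH^2$ respectively; in particular the disk $D^2$ and the one-cone-point disks $D^2(n)$, whose associated Seifert pieces are solid tori, are $S^2$-orbifolds and do not arise here. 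Enumerating compact $2$-orbifolds with non-empty boundary, only cone-point singularities, and $\chi^{\mathrm{orb}}=0$ leaves exactly three possibilities: the annulus, the M\"obius band, and the disk $D^2(2,2)$ with two cone points of order $2$. Correspondingly $\Gamma_0:=\pi_1^{\mathrm{orb}}(B)$ is $\dbZ$, $\dbZ$, or the infinite dihedral group, and in each case it is infinite and virtually cyclic.

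For the second step, $M$ has non-empty boundary and so is not covered by $S^3$; thus Lemma \ref{orbifold SES} yields an extension $1\to\dbZ\to\Gamma\to\Gamma_0\to1$. Passing to a finite-index $\dbZ\le\Gamma_0$ and using $H^2(\dbZ;\dbZ)=0$, the corresponding finite-index subgroup of $\Gamma$ is $\dbZ^2$ or the Klein bottle group $\dbZ\rtimes\dbZ$, so $\Gamma$ is virtually $\dbZ^2$. Moreover $\Gamma$ is torsion-free: $M$ is aspherical, being a Seifert fiber space with infinite fundamental group whose interior is not modeled on $S^2\times\dbE$, so its universal cover is contractible. By the Bieberbach theorems, a torsion-free group that is virtually $\dbZ^2$ is a $2$-dimensional Bieberbach group, i.e. isomorphic to $\dbZ^2$ or to $\dbZ\rtimes\dbZ$; this is the asserted description of $\Gamma$ (and when $B=D^2(2,2)$ one necessarily gets $\dbZ\rtimes\dbZ$, since $\dbZ^2$ has no surjection onto the infinite dihedral group). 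Since both $\dbZ^2$ and $\dbZ\rtimes\dbZ$ are $2$-crystallographic, Lemma \ref{properties:gd}(3) gives $\gdvc(\Gamma)=3$.

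The one delicate point is the first step: using the correct notion of ``modeled on $\dbE^2$'' for $2$-orbifolds with boundary, so that spherical orbifolds with flat interior (such as $D^2$ or $D^2(n)$, giving solid-torus Seifert pieces with $\Gamma\cong\dbZ$ and $\gdvc(\Gamma)=0$) are not mistakenly swept in; this is precisely what the $\chi^{\mathrm{orb}}$-trichotomy for good $2$-orbifolds, together with Lemma \ref{impossible:compact:case}, handles. The remaining ingredients --- extension bookkeeping for groups built from $\dbZ$, asphericity of non-spherical Seifert fiber spaces, and the classification of planar Bieberbach groups --- are routine.
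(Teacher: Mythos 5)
Your proof is correct, and it reaches the conclusion by a genuinely different route from the paper. The paper's proof is essentially a citation: it invokes \cite[Theorem~1.2.2]{Mo05} to assert that a compact Seifert fibered manifold with boundary and Euclidean base interior is modeled on $\dbE^3$ or $\nil$ or is diffeomorphic to $T^2\times I$ or to the twisted $I$-bundle $K$ over the Klein bottle, rules out the first two because they have no noncompact (finite-volume) geometric quotients, and reads off $\Gamma\cong\dbZ^2$ or $\dbZ\rtimes\dbZ$ from the two surviving manifolds. You instead work at the level of the base: you enumerate the compact $2$-orbifolds with boundary, cone singularities only, and $\chi^{\mathrm{orb}}=0$ (annulus, M\"obius band, $D^2(2,2)$), and then recover $\Gamma$ from the exact sequence of Lemma \ref{orbifold SES} together with torsion-freeness (asphericity) and the classification of $2$-dimensional Bieberbach groups. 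What your approach buys is self-containedness --- no appeal to the classification of geometric $3$-manifolds with boundary --- and an explicit identification of $\Gamma_0$, which the paper never needs; what the paper's approach buys is brevity and the explicit identification of $M$ itself as $T^2\times I$ or $K$, which is then reused verbatim in Lemma \ref{all-SF-flat}. Two small remarks. First, your caveat about the meaning of ``$B^\circ$ modeled on $\dbE^2$'' is well taken: the dichotomy of Lemma \ref{impossible:compact:case} is not literally exclusive (an open annulus or disk is a quotient of both $\dbE^2$ and $\dbH^2$), and the $\chi^{\mathrm{orb}}$-trichotomy you adopt is the intended reading; the disk and $D^2(n)$ bases are excluded in the paper's application because JSJ pieces have incompressible boundary, hence are never solid tori. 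Second, you invoke orientability of $M$ (to rule out reflector curves in $B$), which is not a stated hypothesis of the proposition but is implicit throughout the paper and equally implicit in its citation of \cite{Mo05}; neither gap is yours alone.
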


\begin{proof}
Then by \cite[Theorem~1.2.2]{Mo05} we have that $M$ is modeled on $\dbE^3$ or $\nil$,
or is diffeomorphic to $S^1\times S^1\times I$, or is a twisted $[0,1]$-bundle over
the Klein bottle. But neither $\dbE^3$ nor $\nil$ admit noncompact geometric quotients.
In the remaining two cases the fundamental group of $M$ is isomorphic to $\dbZ^2$ or
to $\dbZ\rtimes \dbZ$ respectively. Hence $\Gamma$ is a $2$-crystallographic group,
and the conclusion follows from Lemma \ref{properties:gd}.
\end{proof}

\begin{proposition} \label{prop:bdry:hyp:orb}
Let $M$ be a compact Seifert fiber space with nonempty boundary, and let
$\Gamma=\pi_1(M)$ be the fundamental group.  Suppose that the interior of the base orbifold $B^\circ$ is modeled on $\dbH^2$, and has orbifold fundamental group
$\pi_1^{\mathrm{orb}}(B)=\Gamma_0$. Let
$\phi:\Gamma\to \Gamma_0$ be the quotient map, and let $\mc{A}$ be the
collection of preimages of maximal infinite virtually cyclic subgroups of
$\Gamma_0$. Let $\mc{H}$ be a set of representatives of conjugacy classes in
$\mc{A}$.  Consider the following cellular $\Gamma$-push-out:

\[\xymatrix{\coprod_{\tilde{H}\in\calH}\Gamma\times_{\tilde{H}}\dbE \ar[d] \ar[r]  & \underline{E}\Gamma_0 \ar[d]\\
\coprod_{\tilde{H}\in\calH}\Gamma\times_{\tilde{H}}\evc\tilde{H}\ar[r] & X
}
\]

Then $X$ is a model for $\evc\Gamma$.  In the above cellular
$\Gamma$-push-out, we require either (1) the left vertical map is the disjoint union of
cellular $\widetilde{H}$-maps ($\widetilde{H}\in\mc{H}$), the upper horizontal map is an
inclusion of $\Gamma$-CW-complexes, or (2) the left vertical map is the disjoint union of
inclusions of $\widetilde{H}$-CW-complexes ($\widetilde{H}\in\mc{H}$), the upper horizontal 
map is a cellular $\Gamma$-map.

Moreover, $\underline{E}\Gamma_0$ admits a $1$-dimensional model, and $\evc \widetilde{H}$ 
admits a $3$ dimensional model. In particular   $\gdvc(\Gamma)=3$.
\end{proposition}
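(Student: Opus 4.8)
The plan is to imitate the structure of the proof of Proposition \ref{hyp B model} (the closed base-orbifold case), making the adjustments forced by $M$ having nonempty boundary. First I would verify the push-out is a model for $\evc\Gamma$. Since $B^\circ$ is modeled on $\dbH^2$, the orbifold fundamental group $\Gamma_0=\pi_1^{\mathrm{orb}}(B)$ is a (finitely generated, non-cocompact) Fuchsian group, hence a virtually free hyperbolic group; by \cite[Theorem 2.6]{LO07} the collection $\mc{A}_0$ of maximal infinite virtually cyclic subgroups of $\Gamma_0$ is adapted to the pair $(\fin_0,\vcyc_0)$. Applying Corollary \ref{push-out:cyclic:kernel} to the surjection $\phi\colon\Gamma\to\Gamma_0$ with cyclic kernel (from Lemma \ref{orbifold SES}, the kernel $K$ is infinite cyclic since $M$ is not covered by $S^3$ — indeed $M$ has nonempty boundary) shows that the displayed $\Gamma$-push-out with $\underline{E}\Gamma_0$ in the upper right corner and the $\evc\widetilde H$ attached along copies of $\dbE=E_{\fin}H$ is a model for $\evc\Gamma$. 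One must only check that $\dbE$ is a model for $E_{\fin}H$ for each $H=\phi(\widetilde H)\in\mc A_0$: this holds because $H$ is infinite virtually cyclic, hence acts on its unique axis $\simeq\dbE$ in $\dbH^2$ with finite (normal) kernel, so the quotient is $1$-crystallographic.

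Next I would establish the dimension statements. That $\underline E\Gamma_0$ admits a $1$-dimensional model follows from $\Gamma_0$ being virtually free (equivalently, $\cdfin(\Gamma_0)\leq 1$), so it acts on a tree with finite stabilizers; alternatively, $B^\circ$ being a noncompact hyperbolic $2$-orbifold deformation retracts onto a graph of finite groups. For the groups $\widetilde H\in\mc H$: each $H\in\mc A_0$ is infinite virtually cyclic, so $\widetilde H$ fits in a short exact sequence $1\to\dbZ\to\widetilde H\to H\to 1$ with $H$ virtually $\dbZ$; hence $\widetilde H$ is virtually $\dbZ^2$, i.e. either virtually-$\dbZ^2$ (if the extension is "virtually cyclic $\times\dbZ$") — concretely $\widetilde H$ is a virtually $2$-crystallographic group, so the model for $\evc$ of a $2$-crystallographic group from \cite{CFH06} gives a $3$-dimensional model for $\evc\widetilde H$. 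Feeding a $1$-dimensional model for $\underline E\Gamma_0$, a $1$-dimensional model for $\dbE$, and $3$-dimensional models for $\evc\widetilde H$ into the push-out: the mapping-cylinder gluing contributes $\dbE\times[0,1]$ (dimension $2$) and the $\evc\widetilde H$ (dimension $3$), so $X$ has dimension $\max\{1,2,3\}=3$, giving $\gdvc(\Gamma)\leq 3$.

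For the lower bound I would exhibit a $\dbZ^2$ subgroup of $\Gamma$, forcing $\cdvc(\Gamma)\geq 3$ by Lemma \ref{properties:gd}(1) together with the known value $\gdvc(\dbZ^2)=3$ (Lemma \ref{properties:gd}(3), as $\dbZ^2$ is $2$-crystallographic); combined with Lemma \ref{properties:gd}(2) this yields $\gdvc(\Gamma)=3$. Such a $\dbZ^2$ is available because $M$ has a nonempty toral boundary component whose $\pi_1\cong\dbZ^2$ injects into $\Gamma$ (Lemma \ref{orbifold SES} and incompressibility of Seifert boundary tori); equivalently, take any $\widetilde H\in\mc H$, which contains $\dbZ^2$ since it is virtually $2$-crystallographic with an infinite normal $\dbZ$ mapping to an infinite $\dbZ$ in $H$. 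The main obstacle I anticipate is purely bookkeeping: confirming the hypotheses of Corollary \ref{push-out:cyclic:kernel} apply verbatim when $\Gamma_0$ is only virtually free rather than a surface group, i.e. checking that $\underline E\Gamma_0=\efin\Gamma_0$ really can be taken $1$-dimensional and that the adapted-collection machinery of \cite{LO07} is stated for this class of hyperbolic groups; everything else is a routine transcription of the closed case.
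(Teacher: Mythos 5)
Your proposal is correct and follows essentially the same route as the paper: the push-out of Corollary \ref{push-out:cyclic:kernel} applied to $\phi$, a tree as a $1$-dimensional model for $\underline{E}\Gamma_0$ coming from virtual freeness of $\Gamma_0$, $3$-dimensional models for $\evc\widetilde{H}$ via the finite-normal-subgroup-with-$2$-crystallographic-quotient structure and \cite{CFH06}, and the $\dbZ^2$ lower bound from Lemma \ref{properties:gd}. The only cosmetic difference is that you justify adaptedness of the collection of maximal infinite virtually cyclic subgroups of $\Gamma_0$ by citing \cite[Theorem 2.6]{LO07} for the hyperbolic group $\Gamma_0$, whereas the paper derives it from the acylindrical graph-of-finite-groups splitting of the virtually free group $\Gamma_0$ via \cite[Claim~3]{LO09_2}.
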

\begin{proof}
We would like to use Proposition \ref{LO-push-out} to construct a model for $\evc\Gamma$. 
Let $\fmly'$ be the family of virtually cyclic subgroups of $\Gamma$ and $\fmly$ be the family 
of virtually cyclic subgroups $F$ of $\Gamma$ such that $\phi(F)\leq\Gamma_0$ is finite.
In order to use Proposition \ref{LO-push-out}, we need a model for
$E_{\fmly}\Gamma$, an adapted collection $\mc{A}$, and models for $E_{\fmly}H$
and $\evc H$ for each $H\in\mc{A}$.

First, a model for $E_{\fmly}\Gamma$ is the same as a model for $\underline{E}\Gamma_0$. On 
the other hand $\Gamma_0$ contains as a finite index subgroup the fundamental group of a 
surface with non-empty boundary, therefore $\Gamma_0$ is virtually free. Hence $\Gamma_0$ 
admits a splitting as a fundamental grouph of a graph of groups of finite groups, so the 
Bass-Serre tree $T$ of such a splitting is a model for $\underline{E}\Gamma_0$.

Next let us describe an adapted collection $\calA$.

 Let $\mc{A}$ be the collection of subgroups of $\Gamma$ that are 
 preimages of maximal infinite virtually cyclic subgroups of $\Gamma_0$. 
 Then we claim that $\mc{A}$ is adapted to the pair
$(\fmly,\fmly')$ of families of subgroups of $\Gamma$. In fact, the virtually 
cyclic subgroups of $\Gamma_0$ that are conjugate into a
vertex group of the graph of groups presentation must be finite, since the vertex
groups themselves are finite. In particular, the splitting of $\Gamma_0$ given
by the graph of groups is acylindrical.  By \cite[Claim~3]{LO09_2}, the
collection $\mc{A}_0$ of maximal infinite virtually cyclic subgroups of
$\Gamma_0$ is adapted to the pair $(\mc{FIN}_0,\mc{VC}_0)$ of families of finite
and virtually cyclic subgroups of $\Gamma_0$, respectively.  By Lemma \ref{adapted:lemma2}, 
$\mc{A}=\widetilde{\mc{A}_0}$ is therefore adapted to the pair
$(\fmly,\widetilde{\mc{VC}_0})$ of families of subgroups of $\Gamma$.  Since
$\fmly\subseteq\fmly'\subseteq\widetilde{\mc{VC}_0}$, Lemma \ref{adapted:lemma1}
shows that $\mc{A}$ is adapted to the pair $(\fmly,\fmly')$, as claimed.

Let $\widetilde{H}\in\mc{A}$ be the $\phi$-preimage of a maximal infinite
virtually cyclic subgroup $H\leq\Gamma_0$. A model for
$E_{\fmly}\widetilde{H}$ is the same as a model for $\underline{E}H$. Since 
$H$ is virtually cyclic, $\dbE$ is a model for $\underline{E}H$.

It remains to construct a model for $\evc\widetilde{H}$. But this can be done by an argument identical to the one at the end of Proposition \ref{hyp B model}. We leave the details to the reader.
\end{proof}

\begin{remark}
Note that in both cases whether the base orbifold is modeled on $\dbE^2$ or $\dbH^2$ 
we have explicit models for $\evc G$. In fact, in Proposition \ref{prop:bdry:flat:orb} 
we can use the construction of \cite{CFH06}. While in Proposition \ref{prop:bdry:hyp:orb} 
we have an explicit push-out where $\underline{E}\Gamma_0$ is a tree and every $\tilde{H}$ 
has finite normal subgroup such that the quotient is crystallographic, and again we can 
use \cite{CFH06}.
\end{remark}

\begin{remark}\label{remark:bdry:hyp:orb}
Note that the adapted collection $\calA$ constructed in the proof of Proposition 
\ref{prop:bdry:hyp:orb}, consists  of preimages of maximal infinite virtually cyclic 
subgroups of $\Gamma_0$ in $\Gamma$. So $\calH$ contains representatives of the conjugacy 
classes of the boundary torus of $M$. This fact will be used in some of the
Bredon cohomology computations in Section \ref{sec:Bredon}.
\end{remark}


\section{The hyperbolic case}\label{sec-hyperbolic}

In this section, we will analyze the geometric dimension of lattices in the isometry group $PSL(2, \mathbb C)$
of hyperbolic $3$-space. Since we are going to use some standard properties of hyperbolic $3$-dimensional 
geometry we refer the reader to \cite[p.~448]{Sc83} for details about the geometry of $\hypp$.

\begin{proposition}\label{hyperbolic:case}
Let $M$ be a connected, oriented, finite-volume hyperbolic $3$-manifold, and let $\Gamma=\pi_1(M)$. 
Then $\gdvc(\Gamma)=3$.
\end{proposition}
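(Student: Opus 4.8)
The plan is to establish the two inequalities $\gdvc(\Gamma)\leq 3$ and $\gdvc(\Gamma)\geq 3$ separately. For the lower bound, note that $M$ has finite volume but is not compact exactly when it has cusps; in either situation the crucial point is whether $\Gamma$ contains a $\dbZ^2$ subgroup. If $M$ has a cusp, the cusp subgroup is isomorphic to $\dbZ^2$, so by Lemma \ref{properties:gd}(1) together with the fact that a crystallographic group of rank $2$ has $\gdvc=3$ (Lemma \ref{properties:gd}(3)), we get $\gdvc(\Gamma)\geq\gdvc(\dbZ^2)=3$. If $M$ is closed, $\Gamma$ is a cocompact torsion-free lattice; here one instead argues $\gdvc(\Gamma)\geq 3$ via Lemma \ref{properties:gd}(2), since $\cdvc(\Gamma)\geq \cdfin(\Gamma)=\vcd(\Gamma)=3$ for such a lattice (a closed aspherical $3$-manifold group has cohomological dimension $3$, and $\cdvc\geq\cdfin$ because passing to a larger family cannot decrease Bredon cohomological dimension — this is a standard fact which can be cited or deduced from the existence of the pushout in Proposition \ref{LO-push-out}). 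Combined with Lemma \ref{properties:gd}(2) this also shows $\gdvc(\Gamma)=\cdvc(\Gamma)$ once we know the dimension is $\geq 3$.

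For the upper bound $\gdvc(\Gamma)\leq 3$, I would use the pushout of Proposition \ref{LO-push-out} applied to the pair $(\fin,\vcyc)$ of families of $\Gamma$. The required adapted collection $\calA$ is the collection of maximal infinite virtually cyclic subgroups of $\Gamma$; since $\Gamma$ is a lattice in $\isom(\hypp)$ and is torsion-free, every infinite cyclic subgroup is contained in a unique maximal one, and maximality/self-normalizing/intersection properties follow from the structure of elementary subgroups of Kleinian groups (loxodromic elements stabilize a unique geodesic axis, parabolic elements a unique fixed point at infinity). More carefully, one invokes \cite[Theorem~2.6]{LO07} (as in Proposition \ref{hyp B model}) — but that theorem is stated for hyperbolic groups, and $\Gamma$ need not be hyperbolic if $M$ has cusps, so in the cusped case I would either appeal to relative hyperbolicity or directly verify the adapted-collection axioms by hand using that the peripheral $\dbZ^2$'s and the loxodromic-axis stabilizers are malnormal up to the obvious intersections. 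Then Proposition \ref{LO-push-out} gives a model for $\evc\Gamma$ as a pushout of $\coprod \Gamma\times_H \efin H$, $\efin\Gamma$, and $\coprod\Gamma\times_H \evc H$.

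It remains to bound the dimensions of the three pieces. For $\efin\Gamma$: since $M$ is aspherical of dimension $3$, $\widetilde M\simeq\hypp$ (or the truncated manifold, in the cusped case, which deformation retracts to a $2$-complex — but even $\hypp$ itself works) is a $3$-dimensional model for $\efin\Gamma=E\Gamma$. For $H\in\calA$: if $H\cong\dbZ$ then $\dbE$ is a $1$-dimensional model for $\efin H$ and a point is a $0$-dimensional model for $\evc H$; if $H\cong\dbZ^2$ (a peripheral subgroup) then $\dbE^2$ is a $2$-dimensional model for $\efin H$ and $\evc H$ has a $3$-dimensional model by \cite{CFH06} since $H$ is $2$-crystallographic. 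So $\coprod\Gamma\times_H\efin H$ has dimension $\leq 2$, $\efin\Gamma$ has dimension $\leq 3$, and $\coprod\Gamma\times_H\evc H$ has dimension $\leq 3$; hence the pushout $X$ has dimension $\leq 3$, giving $\gdvc(\Gamma)\leq 3$. Together with the lower bound, $\gdvc(\Gamma)=3$. The main obstacle I anticipate is the verification that the maximal infinite virtually cyclic subgroups form an adapted collection in the \emph{cusped} case, where \cite[Theorem~2.6]{LO07} does not directly apply; this requires a separate argument using the (relative) hyperbolicity of finite-volume Kleinian groups and the malnormality of their elementary subgroups.
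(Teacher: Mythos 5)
Your strategy coincides with the paper's: an adapted‑collection pushout over the maximal elementary subgroups yields a $3$-dimensional model for $\evc\Gamma$, the cusped lower bound comes from a peripheral $\dbZ^2$, and the closed lower bound comes from a cohomology computation. Two points need attention. First, the ``standard fact'' you lean on in the closed case --- that enlarging the family cannot decrease Bredon cohomological dimension --- is \emph{false} in general: an infinite virtually cyclic group has $\cdfin=1$ but $\cdvc=0$, so $\cdvc(\Gamma)\geq\cdfin(\Gamma)$ cannot simply be cited. Your fallback is the correct fix and is exactly what the paper does: in the Mayer--Vietoris sequence of the pushout, every $H\in\calH$ is virtually cyclic in the cocompact case, so $\hfin^3(H;\underline{\dbZ})=0=\hvc^3(H;\underline{\dbZ})$, and the sequence forces $\hvc^3(\Gamma;\underline{\dbZ})$ to surject onto $\hfin^3(\Gamma;\underline{\dbZ})\cong H^3(M;\dbZ)\cong\dbZ$; hence $\cdvc(\Gamma)\geq 3$ and then $\gdvc(\Gamma)=\cdvc(\Gamma)=3$ by Lemma \ref{properties:gd}. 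You should run this argument explicitly rather than appeal to a general monotonicity that does not hold.

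Second, in the cusped case the adapted collection cannot literally be ``the maximal infinite virtually cyclic subgroups'': a maximal cyclic subgroup of a peripheral $\dbZ^2$ is normalized by the whole of $P_\xi$, so the self-normalization axiom fails. The collection must consist of the loxodromic axis stabilizers $M_c$ together with the full maximal parabolic subgroups $P_\xi\cong\dbZ^2$ --- which is what you in effect use when you assign $\dbE^2$ as $\efin H$ and a $3$-dimensional $\evc H$ (via \cite{CFH06}) to the peripheral pieces. The obstacle you flag about \cite[Theorem~2.6]{LO07} is resolved in the way you suggest: that theorem applies to relatively hyperbolic groups, and the paper invokes it for $\Gamma$ relatively hyperbolic with respect to its maximal parabolics, so no hand verification of the adapted-collection axioms is required. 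With these two repairs your argument matches the paper's proof.
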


\begin{proof}
In order to establish the proposition, we start by using a push-out construction to create a model for $\evc \Gamma$. This will
provide an upper bound on $\gdvc(\Gamma)$.

The group $\Gamma$ is a relatively hyperbolic group, relative to the collection of maximal parabolic subgroups $P_\xi$ of $\Gamma$.
From \cite[Theorem 2.6]{LO07}, we know that the collection $\mc{A}$ of infinite maximal subgroups $M_c$ that stabilize a
geodesic $c(\mbr)\subset\hypp$ and infinite maximal parabolic subgroups
$P_{\xi}$ that fix a unique boundary point $\xi\in\partial\hypp$ is adapted to
the pair $(\fin,\vcyc)$. 

Let $\mc{A}$ be the collection of infinite maximal $M_c$ or $P_{\xi}$ subgroups of $\Gamma$. Let $\mc{H}$ be a
complete set of representatives of the conjugacy classes within $\mc{A}$, and
consider the following cellular $\Gamma$-push-out:

\[\xymatrix{\coprod_{H\in\calH}\Gamma\times_{H}\underline{E}H\ar[d] \ar[r]  & \underline{E}\Gamma=\dbH^3 \ar[d]\\
\coprod_{H\in\calH}\Gamma\times_{H}\evc H\ar[r] & X
}
\]
Then Proposition \ref{LO-push-out} tells us that $X$ is a model for $\evc\Gamma$. Since $X$ is $3$-dimensional, we obtain
the inequality $\gdvc(\Gamma)\leq 3$.  If $\Gamma$ is nonuniform, it contains subgroups isomorphic to $\mbz^2$, and the 
conclusion follows from Lemma \ref{properties:gd}.

Suppose now that $\Gamma$ is a uniform lattice. The push-out construction above gives rise to the Mayer-Vietoris
sequence
\[\cdots\to
\hvc^3(\Gamma;\underline{\mbz})\to
\bigg(\bigoplus_{H\in\mc{H}}\hvc^3(H;\underline{\mbz})\bigg)\oplus \hfin^3(\Gamma;\underline{\mbz})\to
\bigoplus_{H\in\mc{H}}\hfin^2(H;\underline{\mbz})  \to \cdots\]
Note that in this case $H$ is always of the form $M_c$ (there are no $P_\xi$ elements in $\calH$ because we have no parabolic elements), 
hence it is virtually cyclic. Moreover, we have $\hfin^3(\Gamma;\underline{\mbz})\cong H^3(M;\dbZ)\cong \dbZ$. The Mayer-Vietoris sequence thus
simplifies to
\[\ldots\to \hvc^3(\Gamma;\underline{\mbz})\to \mbz\to 0\to\ldots\]
This gives the lower bound $3\leq \cdvc(\Gamma)\leq \gdvcgamma$ and completes the proof.
\end{proof}

\section{Two exceptional cases}\label{sec-twisted-bundles}

In this section, we focus on manifolds whose JSJ decomposition has all pieces that are Seifert fibered
with Euclidean base orbifold. We let $K$ denote the twisted I-bundle over the Klein bottle. Note that, 
while the Klein bottle is a  non-orientable surface, the space $K$ is an orientable $3$-manifold with a torus boundary. 

\begin{lemma}\label{all-SF-flat}
Let $M$ be an irreducible $3$-manifold, and assume that all the pieces in the JSJ decomposition are Seifert fibered
with Euclidean base orbifold. Then either:
\begin{enumerate}
\item $M$ is a torus bundle over $S^1$, or
\item $M$ consists of two copies of $K$ glued together along their boundary.
\end{enumerate}
\end{lemma}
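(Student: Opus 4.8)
The plan is to read off the structure of $M$ from the \emph{decomposition graph} $\mathcal{G}$ underlying the JSJ splitting (the graph of groups of Remark \ref{prime:geometric:splittings}): the vertices of $\mathcal{G}$ are the JSJ pieces and the edges are the JSJ tori, with $M$ recovered by gluing the pieces along their toral boundaries as prescribed by the edges. First I would dispose of the degenerate case in which $\mathcal{G}$ has no edges, i.e. the JSJ collection of tori is empty. Then $M$ is itself a closed Seifert fibered manifold with Euclidean base orbifold, so by Proposition \ref{empty:bdary:flat:orb} it is modeled on $\dbE^3$ or on $\nil$; the classification of closed orientable flat and Nil $3$-manifolds then says precisely that $M$ is a torus bundle over $S^1$ or a union of two twisted $I$-bundles over the Klein bottle, which is the desired conclusion.

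So from now on I would assume $\mathcal{G}$ has at least one edge, so that every JSJ piece has nonempty (toral) boundary. Choosing the Seifert fibration on each piece so that its boundary tori are vertical (unions of fibers), the base orbifold of each piece is a compact Euclidean $2$-orbifold \emph{with boundary}; by the proof of Proposition \ref{prop:bdry:flat:orb} (i.e. by \cite[Theorem~1.2.2]{Mo05}, discarding the closed cases $\dbE^3$ and $\nil$), each such piece is diffeomorphic either to $T^2\times I$ or to $K$. Thus every vertex of $\mathcal{G}$ carries one of the two labels $T^2\times I$ or $K$. Since $K$ has exactly one boundary torus, and since a JSJ torus is two-sided (so its two sides lie in two boundary tori), a $K$-labelled vertex is attached to exactly one edge of $\mathcal{G}$, and that edge is not a loop: hence $K$-vertices have valence exactly $1$. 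A $T^2\times I$-vertex has exactly two boundary tori and therefore contributes exactly two edge-ends — two distinct edges, a double edge, or a loop — so has valence exactly $2$.

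Now I would invoke the elementary fact that a finite connected graph with at least one edge in which every vertex has valence at most $2$ is either a cycle (allowing the degenerate cycles: a single vertex carrying a loop, or two vertices joined by a double edge) or a path, and in the path case the two endpoints are exactly the valence-$1$ vertices. If $\mathcal{G}$ contains a $K$-vertex, then, having a valence-$1$ vertex, it is a path whose two endpoints are the (exactly two) $K$-vertices, all interior vertices being $T^2\times I$-pieces; since gluing a collar $T^2\times I$ onto a torus boundary component does not change the diffeomorphism type, $M$ is diffeomorphic to two copies of $K$ glued along their boundary, so (2) holds. If $\mathcal{G}$ contains no $K$-vertex, every vertex is a $T^2\times I$-piece of valence $2$, so $\mathcal{G}$ is a cycle; gluing finitely many copies of $T^2\times I$ cyclically along their ends produces the mapping torus of a self-homeomorphism of $T^2$, so $M$ is a torus bundle over $S^1$ and (1) holds.

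I expect the only genuine subtleties to be: (a) justifying that the Seifert fibration on each piece may be chosen with vertical boundary, so that the base orbifolds truly acquire boundary and Morgan's classification applies (this is standard for the pieces of a JSJ decomposition but should be stated); and (b) citing cleanly the classification of closed orientable flat and Nil $3$-manifolds used in the edgeless case. The graph-theoretic bookkeeping is routine, and minimality of the JSJ collection (which would already forbid interior $T^2\times I$ pieces, collapsing paths immediately to $K\cup_{T^2}K$) is convenient but, as the argument above shows, not logically necessary.
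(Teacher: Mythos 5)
Your treatment of the main case (at least one JSJ torus) is correct and is essentially the paper's argument: both proofs first invoke Proposition \ref{prop:bdry:flat:orb} (i.e.\ Morgan's classification) to see that each piece is $T^2\times I$ or $K$, and then do combinatorics on the decomposition graph. The only real difference is that the paper appeals to minimality of the JSJ collection to kill $T^2\times I$ pieces with distinct boundary tori outright, whereas you instead absorb such pieces as collars; your version is slightly more general (it works for non-minimal collections) at the cost of a little more graph bookkeeping, and your observation that boundary tori of a Seifert fibered piece are automatically saturated is a fair point to make explicit.

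The one genuine problem is your edgeless case. The assertion that the classification of closed orientable flat and $\nil$ manifolds ``says precisely'' that such an $M$ is a torus bundle over $S^1$ or a twisted double of $K$ is false for $\nil$: the closed orientable $\nil$ manifolds Seifert fibered over the turnover orbifolds $S^2(2,3,6)$, $S^2(2,4,4)$, $S^2(3,3,3)$ (with nonzero Euler number) are irreducible, have empty JSJ collection, and are neither torus bundles over $S^1$ nor unions of two copies of $K$ --- their (unique) Seifert base orbifold is not $T^2$, the Klein bottle, or $S^2(2,2,2,2)$, which rules out both structures. (For flat manifolds the claim does hold: five of the six are torus bundles and the sixth, Hantzsche--Wendt, is a twisted double of $K$.) So as literally stated the lemma fails in the edgeless case, and your proof of that case cannot be repaired. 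The correct fix is not to prove more but to assume less: the paper's own proof tacitly assumes the pieces have nonempty boundary (it cites a proposition about Seifert pieces \emph{with} boundary), and the lemma is only ever applied --- in Corollary \ref{excluded-cases} --- after it has been established that the JSJ collection contains at least one torus. You should therefore add the hypothesis that the JSJ collection is nonempty and delete the edgeless case, rather than attempt to cover it.
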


\begin{proof}
From Proposition \ref{prop:bdry:flat:orb}, we know the only such Seifert fibered 
pieces are either (i) the torus times an interval, and (ii) the twisted $I$-bundle $K$ over the Klein bottle. 
If we have a piece of type (i) whose boundary tori are distinct in $M$, then we would violate the minimality
of the number of tori in the JSJ decomposition. So if we have a piece of type (i), then the JSJ decomposition
of $M$ in fact has a {\it single} piece, and $M$ must be a torus bundle over $S^1$. If there are no pieces
of type (i), then the decomposition of $M$ consists of two copies of $K$ identified together. 
\end{proof}

We will now compute the virtually cyclic geometric dimension for these classes of manifolds.

\subsection{Torus bundles over the circle.} 

\begin{proposition}\label{torus-bundle-over-circle}
Let $M$ be a torus bundle over $S^1$, with fundamental group $\Gamma$. Then exactly one of the following happens:
\begin{enumerate}
    \item $M$ is modeled on $\dbE^3$, hence $\gdvcgamma=4$, 
    \item $M$ is modeled on $\nil$, and $\gdvcgamma=3$, 
    \item $M$ is modeled on $\sol$, and $\gdvcgamma=3$.
\end{enumerate}
\end{proposition}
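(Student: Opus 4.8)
The plan is to reduce the computation to the monodromy $A\in SL_2(\dbZ)$ of the bundle. A closed oriented torus bundle over $S^1$ is, up to homeomorphism, the mapping torus $T_A$ of a linear homeomorphism $A\in SL_2(\dbZ)$ of $T^2$, so that $\Gamma\cong\dbZ^2\rtimes_A\dbZ$. By the classification of geometric structures on such bundles (see \cite{Sc83}), $M$ is modeled on $\dbE^3$ exactly when $A$ has finite order (equivalently $|\tr A|<2$ or $A=\pm I$); on $\nil$ exactly when $A$ has infinite order but all its eigenvalues have modulus $1$ (equivalently $|\tr A|=2$ and $A\neq\pm I$); and on $\sol$ exactly when $A$ is Anosov (equivalently $|\tr A|>2$). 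These three possibilities are mutually exclusive and exhaustive, which already gives the ``exactly one of the following'' assertion; it remains to compute $\gdvcgamma$ in each case.

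I would dispatch the first two cases by appealing to results already available. If $A$ has finite order, say $A^{k}=I$, then $\langle\dbZ^2,t^{k}\rangle\cong\dbZ^3$ is a normal subgroup of finite index in $\Gamma$, so $\Gamma$ is a $3$-crystallographic group and Lemma \ref{properties:gd}(3) gives $\gdvcgamma=4$. If $M$ is modeled on $\nil$, then $M$ is a closed Seifert fibered $3$-manifold whose base orbifold is modeled on $\dbE^2$ (standard; see \cite{Sc83}), so Proposition \ref{empty:bdary:flat:orb} applies; since $M$ is not modeled on $\dbE^3$ in this case, we are in the $\nil$ alternative of that proposition and conclude $\gdvcgamma=3$.

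The remaining, and principal, case is that $M$ is modeled on $\sol$, i.e.\ $A$ is Anosov. For the lower bound, $\Gamma$ contains the $2$-crystallographic subgroup $\dbZ^2$, which has $\gdvc(\dbZ^2)=3$ by Lemma \ref{properties:gd}(3); hence Lemma \ref{properties:gd}(1) forces $\gdvcgamma\geq 3$. For the upper bound, the subnormal series $1\trianglelefteq\dbZ\trianglelefteq\dbZ^2\trianglelefteq\Gamma$ with infinite cyclic quotients shows that $\Gamma$ is poly-$\dbZ$ of Hirsch length $3$, so $\vcd(\Gamma)=3$, and $\Gamma$ is not virtually abelian (it has exponential growth), hence not crystallographic. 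I would then invoke \cite[Theorem~5.13]{LW12}. The essential input for verifying its hypotheses is that, $A$ being Anosov, $A^{k}-I$ is invertible over $\dbQ$ for all $k\neq 0$ and $A$ has no rational eigenvector; from this one deduces that no infinite cyclic subgroup $D\leq\Gamma$ has a finite-index normalizer (if $D$ maps nontrivially to $\Gamma/\dbZ^2\cong\dbZ$ then $N_\Gamma(D)$ injects into $\dbZ$, while if $D\leq\dbZ^2$ then $N_\Gamma(D)=\dbZ^2$), so that the relevant condition on normal subgroups in \cite[Theorem~5.13]{LW12} holds with $C=\dbZ^2$. Together with non-crystallographicity this yields $\gdvcgamma=\vcd(\Gamma)=3$, completing the proof.

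The main obstacle is precisely this last step: locating the correct case of \cite[Theorem~5.13]{LW12} and checking its hypotheses for $\sol$ groups. Unlike the $\nil$ case --- which satisfies the hypotheses of case 2b of that theorem --- a $\sol$ group fails the requirement that no subgroup have commensurator of virtual cohomological dimension $1$: an infinite cyclic subgroup generated by an element projecting nontrivially to $\Gamma/\dbZ^2$ is equal to its own commensurator, which is infinite cyclic. One must therefore work with a different alternative of the theorem (or argue by a direct push-out over suitable intermediate families, since the collection of maximal infinite cyclic subgroups is not self-normalizing and hence not adapted to $(\fin,\vcyc)$), but in every instance the verification reduces to the same elementary linear algebra of the Anosov matrix $A$ over $\dbZ$ and $\dbQ$.
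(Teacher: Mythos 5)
Your reduction to the monodromy $A\in SL_2(\dbZ)$, the trichotomy by $|\tr A|$, and the treatment of the $\dbE^3$ and $\nil$ cases all coincide with the paper's proof (the paper identifies the $\nil$ case via \cite[Theorem~7]{Po08} and \cite[Table 1]{AFW15} rather than quoting it as standard, but lands on the same Proposition \ref{empty:bdary:flat:orb}). The lower bound $\gdvcgamma\geq 3$ in the $\sol$ case via a $\dbZ^2$ subgroup is also the paper's argument.

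The genuine gap is exactly where you flag it: in the $\sol$ case you never verify the hypotheses of any actual alternative of \cite[Theorem~5.13]{LW12}, so the upper bound $\gdvcgamma\leq 3$ is not established. The condition you do check --- that no infinite cyclic $D\leq\Gamma$ has finite-index normalizer, so that condition (1) of case 2b holds vacuously with $C=\dbZ^2$ --- is correct, but, as you yourself then observe, condition (2) of case 2b fails (for $t$ a lift of a generator of $\Gamma/\dbZ^2$, the commensurator of $\langle t\rangle$ meets $\dbZ^2$ trivially, hence has $\vcd=1$), so case 2b does not apply and the sentence ``completing the proof'' is unearned. The paper closes this by checking a different hypothesis of \cite[Theorem~5.13]{LW12}: every finite-index subgroup $H\leq\Gamma$ has trivial centralizer $Z_\Gamma(H)$, equivalently finite center. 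This follows from the linear algebra you already have in hand: if $(v,m)$ centralizes the finite-index subgroup $H\cap\dbZ^2$ of $\dbZ^2$ then $A^m$ fixes a finite-index subgroup of $\dbZ^2$, forcing $A^m=I$ and hence $m=0$; and then commuting $(v,0)$ with some $(u,k)\in H$, $k\neq 0$, gives $(A^k-I)v=0$ and hence $v=0$ since $A^k$ is again hyperbolic. So the missing step is short, but you do need to commit to the alternative of the L\"uck--Weiermann theorem governed by centers (or centralizers) of finite-index subgroups and verify that condition, rather than gesture at ``a different alternative.''
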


\begin{proof}
Since $M$ is a torus bundle over $S^1$, 
$\Gamma \cong \dbZ^2 \rtimes_{\varphi} \dbZ$ with $\varphi\colon \dbZ \to SL_2(\dbZ)$. Denote $\varphi(1)=A$. 
We have three cases depending on whether the matrix $A$ is elliptic, parabolic or hyperbolic (these cases correspond to
whether the trace of $A$ is $<2$, $=2$ and $>2$ respectively). 

If $A$ is elliptic it has finite order, so $\Gamma$ is virtually $\dbZ^3$. This implies $M$ 
is finitely covered by the $3$-torus, and must be crystallographic (see \cite[Table 1]{AFW15}). Then  
$\gdvcgamma=4$ follows from \cite{CFH06}.

If $A$ is parabolic, then the action on $\dbZ^2$ has an invariant rank one subgroup. This implies that the center $Z$ of $\Gamma'$ is infinite cyclic. 
Since $Z$ is a characteristic group of $\Gamma'$, it follows that $Z$ is an infinite cyclic normal subgroup of $\Gamma$. Applying 
\cite[Theorem~7]{Po08}, we see that $M$ is Seifert fibered with virtually nilpotent (but not virtually abelian) fundamental group, 
so is modeled on $\nil$ (see \cite[Table 1]{AFW15}). From Proposition \ref{empty:bdary:flat:orb} we obtain that $\gdvcgamma=3$.

Finally, consider the case where $A$ is hyperbolic. Then the action of $A$ on $\dbZ^2$ does not have any non-trivial invariant subgroups. 
This implies the center of $\Gamma$ is trivial, so by \cite[Theorem~4.7.13]{Th97} we obtain that $M$ is modeled on $\sol$. 
In order to compute $\gdvcgamma$, we will verify that every finite index subgroup of $\Gamma$ has finite center.
Let $H\leq \Gamma$ be a finite index subgroup. Then we have the short exact sequence
\[
1\to H\cap \dbZ^2 \to H \to \dbZ \to 1.
\]
Then $H\cap \dbZ^2$ has finite index in $\dbZ^2$, and $p(H)\neq 0$.
$A$ is hyperbolic, so every positive power of $A$ is also hyperbolic. This again implies that the centralizer $Z_{\Gamma}(H)$ must be trivial. 
Recalling that $H$ was an arbitrary finite index subgroup of $\Gamma$, \cite[Theorem~5.13]{LW12} allows us to conclude $\gdvcgamma=3$. 
\end{proof}

\subsection{Twisted doubles of $K$.}
\begin{proposition}\label{flat-flat:case}
Let $M$ be an irreducible $3$-manifold obtained as the union of two copies of $K$, where the gluing is 
via a homeomorphism $\varphi\colon T\to T$ between the boundary torus. Denote by $\Gamma$ 
the fundamental group of $M$. Then exactly one of the following happens:
\begin{enumerate}
    \item $M$ is modeled on $\dbE^3$, hence $\gdvcgamma=4$, 
    \item $M$ is modeled on $\nil$, and $\gdvcgamma=3$, 
    \item $M$ is modeled on $\sol$, and $\gdvcgamma=3$.
\end{enumerate}
\end{proposition}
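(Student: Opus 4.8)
The plan is to follow the template of Proposition~\ref{torus-bundle-over-circle}, after first pinning down the algebraic structure of $\Gamma$. Since $M$ is the union of two copies of the twisted $I$-bundle $K$ glued along their common boundary torus $T$, the Seifert--van Kampen theorem gives $\Gamma\cong\pi_1(K)*_{\pi_1(T)}\pi_1(K)$. Now $K$ deformation retracts onto the Klein bottle, so $\pi_1(K)$ is the Klein bottle group, and the boundary inclusion $T\hookrightarrow K$ realizes $\pi_1(T)$ as the unique index-$2$ subgroup of $\pi_1(K)$ isomorphic to $\dbZ^2$; being of index $2$, it is normal in each free factor. Hence $C:=\pi_1(T)$ is normal in $\Gamma$ and $\Gamma/C\cong(\dbZ/2)*(\dbZ/2)$ is the infinite dihedral group $D_\infty$. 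Thus $\Gamma$ sits in a short exact sequence $1\to\dbZ^2\to\Gamma\to D_\infty\to 1$; in particular $\Gamma$ is virtually poly-$\dbZ$ of Hirsch length $3$, and it is torsion-free since $M$ is an irreducible $3$-manifold with infinite fundamental group, hence aspherical.

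Next I would show that $M$ is geometric. Taking the preimage in $\Gamma$ of the translation subgroup $\dbZ\leq D_\infty$ yields an index-$2$ subgroup which, because $\dbZ$ has cohomological dimension $1$, splits as $\dbZ^2\rtimes\dbZ$; the corresponding double cover $M'$ of $M$ is therefore a torus bundle over $S^1$. By Proposition~\ref{torus-bundle-over-circle}, $M'$ is modeled on $\dbE^3$, $\nil$, or $\sol$, and since each of these three geometries is inherited by every closed manifold that it finitely covers (see \cite{Sc83}), $M$ is modeled on $\dbE^3$, $\nil$, or $\sol$ as well. The three cases are mutually exclusive and are detected by $\Gamma$ itself: it is virtually abelian exactly in the $\dbE^3$ case, virtually nilpotent but not virtually abelian exactly in the $\nil$ case, and virtually solvable but not virtually nilpotent exactly in the $\sol$ case. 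This gives the asserted trichotomy.

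It remains to compute $\gdvcgamma$ in each case. If $M$ is modeled on $\dbE^3$, then $\Gamma$ is $3$-crystallographic, so $\gdvcgamma=4$ by Lemma~\ref{properties:gd}(3). If $M$ is modeled on $\nil$, then $M$ is a closed Seifert fibered manifold with base orbifold modeled on $\dbE^2$, so $\gdvcgamma=3$ by Proposition~\ref{empty:bdary:flat:orb}. Finally, suppose $M$ is modeled on $\sol$. For the lower bound, $\Gamma$ contains a copy of $\dbZ^2$ (for instance $C$), which is $2$-crystallographic with $\gdvc(\dbZ^2)=3$ by Lemma~\ref{properties:gd}(3), so $\gdvcgamma\geq 3$ by Lemma~\ref{properties:gd}(1). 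For the upper bound, $\Gamma$ is virtually poly-$\dbZ$ and not virtually abelian, and I would deduce $\gdvcgamma=3$ from \cite[Theorem~5.13]{LW12}, verifying its hypotheses on the centers and commensurators of finite-index subgroups exactly as in the $\sol$ case of Proposition~\ref{torus-bundle-over-circle}: the key point is that the monodromy of any finite-index torus-bundle subgroup of $\Gamma$ is Anosov, so no positive power of it has a nonzero fixed vector, which forces the relevant centralizers to be trivial (and, $\Gamma$ being torsion-free, that every finite-index subgroup has trivial center).

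The step I expect to be the main obstacle is the geometricity assertion together with the $\sol$ upper bound: identifying which of the three geometries occurs and transferring it from the finite cover $M'$, and checking the precise hypotheses of \cite[Theorem~5.13]{LW12} for all finite-index subgroups in the $\sol$ case. The remaining pieces---the van Kampen computation, the extension by $D_\infty$, and the $\dbE^3$ and $\nil$ computations---are routine given Propositions~\ref{torus-bundle-over-circle} and~\ref{empty:bdary:flat:orb} and Lemma~\ref{properties:gd}.
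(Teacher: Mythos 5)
Your proposal is correct and follows essentially the same route as the paper: both identify $\Gamma$ as $(\dbZ\rtimes\dbZ)*_{\dbZ^2}(\dbZ\rtimes\dbZ)$ with the boundary $\dbZ^2$ normal and quotient $D_\infty$, pass to the index-two subgroup $p^{-1}(\dbZ)\cong\dbZ^2\rtimes\dbZ$ corresponding to a torus-bundle double cover, invoke Proposition~\ref{torus-bundle-over-circle} together with the fact that the geometry is detected algebraically and hence shared with the finite cover, and then compute $\gdvcgamma$ case by case via \cite{CFH06}, Proposition~\ref{empty:bdary:flat:orb}, and \cite[Theorem~5.13]{LW12}. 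No substantive differences.
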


\begin{proof}
We know that $\pi_1(K)$ is isomorphic to the fundamental group of the Klein bottle $\dbZ\rtimes \dbZ$.
This implies $\Gamma\cong(\dbZ\rtimes \dbZ) *_{\dbZ^2} (\dbZ\rtimes \dbZ)$, where the $\dbZ^2$, 
which embeds as an index two subgroup of $\dbZ\rtimes \dbZ$, comes from the boundary of $K$. Note that 
the $\dbZ ^2$ subgroup is a normal subgroup of $\Gamma$, so can be identified with the kernel of an
induced surjective morphism $p: \Gamma \rightarrow D_\infty$. Defining $\Gamma^\prime := p^{-1}(\mathbb Z)$ to
be the pre-image of the cyclic index two subgroup of the infinite dihedral group $D_\infty$, we obtain the diagram
\[
\xymatrix{
1 \ar[r]  & \dbZ^2 \ar@{=}[d] \ar[r] & \Gamma \ar[r]^{p} & D_\infty \ar[r] & 1 \\
1 \ar[r] & \dbZ^2 \ar[r] & \Gamma' \ar[r]^{p} \ar@{^{(}->}[u] & \dbZ \ar[r] \ar@{^{(}->}[u] & 1
}
\]
Thus we see that $\Gamma'$ is one of the groups discussed in Proposition \ref{torus-bundle-over-circle}. Since
$\Gamma$ contains $\Gamma'$ as an index two subgroup, we see that the geometry of $M$ coincides with 
the geometry of the corresponding double cover $M'$ (see the algebraic criteria in \cite[Table 1]{AFW15}). 

The calculations of $\gdvcgamma$ then follow from \cite{CFH06} in the $\dbE^3$ case, and from Proposition \ref{empty:bdary:flat:orb}
in the $\nil$ case. In the $\sol$ case, just as in Proposition \ref{torus-bundle-over-circle}, one can easily verify
that $\Gamma$ satisfies the conditions of \cite[Theorem~5.13]{LW12}, which gives us $\gdvcgamma=3$ (the
details are left to the reader). 
\end{proof}

Let us summarize the information we have so far on the JSJ decomposition of $M$, when $M$ is not geometric. 

\begin{corollary}\label{excluded-cases}
Let $M$ be a prime $3$-manifold, which we assume is not geometric, and let $N_i$ be the pieces in the JSJ decomposition of $M$.  
Then all the $N_i$ are either (i) hyperbolic, (ii) Seifert fibered over a hyperbolic base, or (iii) copies of $K$, the
twisted $I$-bundle over the Klein bottle. 

Moreover, every piece of type (iii) is attached to a piece of type (i) or (ii).
In particular, there must be a piece of type (i) or (ii).
\end{corollary}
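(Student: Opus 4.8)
The plan is to combine the JSJ decomposition (Theorem \ref{jsj decomposition}) with the classification of Seifert fibered pieces already carried out in Sections \ref{sec-Seifert-fibered} and \ref{sec-twisted-bundles}. First I would recall that, by Theorem \ref{jsj decomposition}, since $M$ is prime each JSJ piece $N_i$ is either hyperbolic or Seifert fibered with nonempty toral boundary (the boundary is nonempty precisely because $M$ is assumed non-geometric, so the JSJ family of tori is nontrivial — if it were empty, $M$ would itself be a single geometric piece). For a Seifert fibered piece with nonempty boundary, Lemma \ref{impossible:compact:case} says the base orbifold is good and modeled on $\dbH^2$ or $\dbE^2$. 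In the $\dbH^2$ case we are in situation (ii). In the $\dbE^2$ case, Proposition \ref{prop:bdry:flat:orb} together with \cite[Theorem~1.2.2]{Mo05} tells us $N_i$ is either $T^2\times I$ or the twisted $I$-bundle $K$ over the Klein bottle; but a $T^2\times I$ piece would have both boundary tori isotopic in $M$, contradicting the minimality of the JSJ family (as in the proof of Lemma \ref{all-SF-flat}) — unless $M$ is itself a torus bundle over $S^1$, which is geometric by Proposition \ref{torus-bundle-over-circle} and hence excluded. So the only surviving $\dbE^2$-type piece is $K$, which is situation (iii). This establishes the trichotomy.

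For the ``moreover'' clause, I would argue by contradiction: suppose some piece $N_i$ of type (iii) is glued (along one or both of its boundary tori) only to pieces of type (iii). A copy of $K$ has a single boundary torus, so the only way a $K$-piece can be adjacent solely to other $K$-pieces is if the JSJ graph consists of exactly two $K$-pieces glued along their common boundary torus. But that manifold is exactly the ``twisted double of $K$'' analyzed in Proposition \ref{flat-flat:case}, where it is shown to be geometric (modeled on $\dbE^3$, $\nil$, or $\sol$) — again contradicting the hypothesis that $M$ is non-geometric. Hence every $K$-piece is adjacent to a piece of type (i) or (ii), and in particular at least one such piece exists (the JSJ graph is nonempty and connected, and it cannot consist solely of $K$-pieces).

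The main subtlety — though it is more bookkeeping than a genuine obstacle — is making sure that \emph{every} configuration in which only Seifert-fibered-Euclidean-base and/or $T^2\times I$ pieces appear has already been ruled out by Section \ref{sec-twisted-bundles}. This is precisely the content of Lemma \ref{all-SF-flat}: if \emph{all} JSJ pieces are Seifert fibered with Euclidean base, then $M$ is either a torus bundle over $S^1$ or a twisted double of $K$, both geometric. Thus the presence of at least one hyperbolic or hyperbolic-base-Seifert piece is forced by non-geometricity, and any isolated cluster of Euclidean-base pieces must attach to such a piece. I would close by simply invoking Lemma \ref{all-SF-flat} and Propositions \ref{torus-bundle-over-circle} and \ref{flat-flat:case} to discharge all the degenerate cases, leaving the stated trichotomy with the adjacency condition.
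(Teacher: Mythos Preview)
Your proposal is correct and follows essentially the same route as the paper's own proof: invoke minimality of the JSJ tori (as in the proof of Lemma \ref{all-SF-flat}) to exclude $T^2\times I$ pieces, use Lemma \ref{impossible:compact:case} and Proposition \ref{prop:bdry:flat:orb} to reduce the Euclidean-base Seifert pieces to copies of $K$, and then appeal to Proposition \ref{flat-flat:case} for the adjacency claim. Your write-up is in fact more careful than the paper's (which cites Lemma \ref{all-SF-flat} slightly loosely, since that lemma literally assumes \emph{all} pieces are Euclidean-base, whereas the minimality argument inside its proof is what is really being used).
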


\begin{proof}
There must be at least one torus in the decomposition, for otherwise $M$ itself is closed hyperbolic or closed Seifert fibered, 
hence geometric. By Lemma \ref{all-SF-flat} and Proposition \ref{torus-bundle-over-circle}, there are no pieces homeomorphic 
to $T^2\times [0,1]$, so the only pieces that
are Seifert fibered over a Euclidean base $2$-orbifold are copies of $K$. Finally, if a piece of type (iii) is attached to a piece
of type (iii), then Proposition \ref{flat-flat:case} tells us $M$ is geometric. 
\end{proof}


\section{Reducing to the JSJ pieces}\label{sec-reduction}


Next we relate the study of the virtually cyclic geometric dimension of the fundamental group of a {\it prime} manifold, to that of the components in its JSJ decomposition.
In Sections \ref{sec-Seifert-fibered} and \ref{sec-hyperbolic}, we have already calculated the virtually cyclic geometric dimension of the prime manifolds that are geometric. So throughout this section, we
will work exclusively with non-geometric prime $3$-manifolds.

\begin{theorem}\label{reduction:seifert:hyp}
Let $M$ be a closed, oriented, connected, \emph{prime} $3$-manifold which is not geometric. Let $N_1,\dots, N_k$ with $k\geq 1$, 
be the components arising in the JSJ decomposition. Denote $G=\pi_1(M)$, $G_i=\pi_1(N_i)$. Let $X_i$ be an arbitrary 
model for $\underline{\underline{E}}G_i$. Then
\[
3\leq \max\{\gdvc(G_i)|1\leq i\leq k\}\leq \gdvc(G) \leq \max\{4, \dim(X_i) | 1\leq i\leq k\}.
\]
\end{theorem}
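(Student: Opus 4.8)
The plan is to prove the three inequalities separately. The middle one is immediate: by Theorem~\ref{jsj decomposition} the JSJ tori are $\pi_1$-injective, so each $G_i=\pi_1(N_i)$ is a subgroup of $G=\pi_1(M)$, and since $\vcyc\cap G_i$ is just the family of virtually cyclic subgroups of $G_i$, Lemma~\ref{properties:gd}(1) gives $\gdvc(G_i)\le\gdvc(G)$ for each $i$. For the lower bound $3\le\max_i\gdvc(G_i)$, I would apply Corollary~\ref{excluded-cases}: because $M$ is prime and non-geometric, some piece $N_i$ is of type (i) — a cusped finite-volume hyperbolic $3$-manifold — or of type (ii) — Seifert fibered with base orbifold whose interior is modeled on $\dbH^2$. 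In the first case Proposition~\ref{hyperbolic:case} gives $\gdvc(G_i)=3$, and in the second Proposition~\ref{prop:bdry:hyp:orb} gives $\gdvc(G_i)=3$.

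For the upper bound, the JSJ decomposition presents $G$ as the fundamental group of a graph of groups $\mathbf{Y}$ whose vertex groups are the $G_i$ and whose edge groups are all isomorphic to $\dbZ^2$ (Remark~\ref{prime:geometric:splittings}). The essential point is that this splitting is acylindrical in the sense of Definition~\ref{acylindrical:def}. Granting this, Corollary~\ref{vcgd:acyl} gives $\gdvc(G)\le\max\{2,\gdvc(G_e)+1,\gdvc(G_i)\}$, the maximum over edge groups $G_e$ and vertex groups $G_i$; since each $G_e\cong\dbZ^2$ is $2$-crystallographic, Lemma~\ref{properties:gd}(3) gives $\gdvc(G_e)=3$, hence $\gdvc(G_e)+1=4$, while $\gdvc(G_i)\le\dim X_i$ because $X_i$ is a model for $\evc G_i$. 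Thus $\gdvc(G)\le\max\{4,\dim X_i : 1\le i\le k\}$.

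The main obstacle is therefore the acylindricity of the JSJ splitting, and I would prove it by bounding path stabilizers in the Bass--Serre tree $T$. All edge stabilizers are conjugates of $\dbZ^2$ and hence torsion free, so a path of positive length has finite stabilizer precisely when it has trivial stabilizer. Two structural facts coming from Corollary~\ref{excluded-cases} organize the argument: a lift of a type-(iii) vertex $K$ (twisted $I$-bundle over the Klein bottle) has exactly two incident edges in $T$, since $\pi_1(\partial K)\cong\dbZ^2$ has index $2$ in $\pi_1(K)$, and these two edges carry the \emph{same} stabilizer because $\pi_1(\partial K)$ is normal in $\pi_1(K)$; and every type-(iii) vertex is adjacent only to type-(i) and type-(ii) vertices, so a geodesic path never traverses two type-(iii) vertices in a row. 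Consequently, in a path $p=(x_0,\dots,x_k)$ with $k$ large enough, either some interior vertex $x_j$ has type (i), in which case its two consecutive edges are stabilized by distinct maximal parabolic subgroups of the cusped hyperbolic group $\pi_1(N_{x_j})$, which intersect trivially, so $\stab(p)=1$; or one can pick two interior type-(ii) vertices $x_a,x_b$ consecutive among the type-(i)/(ii) vertices of $p$, hence joined in $T$ either directly or through a single type-(iii) vertex.

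At a type-(ii) vertex the two edges of $p$ are stabilized by peripheral $\dbZ^2$'s whose intersection is exactly the fiber subgroup (the quotient by the fiber maps into two distinct peripheral cyclic subgroups of the hyperbolic base orbifold group, which meet trivially), so $\stab(p)\le F_{x_a}\cap F_{x_b}$ with $F_{x_a},F_{x_b}\cong\dbZ$ the respective Seifert fibers. It then remains to see $F_{x_a}\cap F_{x_b}=1$: if $x_a$ and $x_b$ are adjacent, this is the JSJ minimality condition that the Seifert fibers on the two sides of a JSJ torus are non-isotopic — otherwise the fibrations would extend across the torus, giving a Seifert structure on the union and contradicting minimality; if instead $x_a$ and $x_b$ are joined through a type-(iii) vertex, then both $F_{x_a}$ and $F_{x_b}$ lie in the common peripheral torus $P\cong\pi_1(\partial K)$, and they differ by the order-two automorphism of $P$ induced by the deck transformation of the orientation double cover of the Klein bottle underlying $K$, whose only invariant slopes are the fibers of the two Seifert fibrations of $K$ — and those are excluded for $F_{x_a}$, again by JSJ minimality. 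In every case $\stab(p)=1$, so the splitting is acylindrical. I expect this last step — the precise description of the fibers of $K$ and their behaviour under the boundary involution, and the determination of an explicit workable value of $k$ — to be where most of the care is needed.
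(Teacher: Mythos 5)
Your proposal is correct and follows essentially the same route as the paper: the middle inequality via restriction of models to the $\pi_1$-injective vertex subgroups, the upper bound via acylindricity of the JSJ splitting combined with Corollary~\ref{vcgd:acyl} and $\gdvc(\dbZ^2)=3$, and the acylindricity itself proved exactly as in Proposition~\ref{jsj acylindrical} (distinct parabolic fixed points at hyperbolic vertices, fiber subgroups at Seifert vertices with hyperbolic base, JSJ minimality to rule out matching fibrations, and the holonomy involution of $\pi_1(K)$ for the twisted $I$-bundle pieces). The only cosmetic difference is that you obtain the lower bound $3$ from a single type-(i) or type-(ii) piece via Propositions~\ref{hyperbolic:case} and \ref{prop:bdry:hyp:orb}, whereas the paper notes that every $G_i$ contains a $\dbZ^2$; both are valid.
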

\begin{proof}
Since $M$ is not geometric, it has at least one torus in its JSJ decomposition (see Corollary \ref{excluded-cases}), so we will 
have a subgroup of $G$ isomorphic to $\dbZ^2$. 
Moreover, every $G_i$ has a subgroup isomorphic to $\dbZ^2$, giving us the first inequality. The second inequality follows from Lemma 
\ref{properties:gd}. For the last inequality, we proceed as in the proof of Theorem \ref{reduction:irreducible}. The JSJ decomposition 
provides a splitting of $G$ as the fundamental group of a graph of groups with vertex groups $G_i$ and edge groups copies of $\dbZ^2$, 
and by Lemma \ref{properties:gd} we have $\gdvc(\dbZ^2)=3$. Now the conclusion will follow from Corollary \ref{vcgd:acyl} once we prove 
that the splitting of $G$ is acylindrical, which is done below in Proposition \ref{jsj acylindrical}.
\end{proof}

\begin{proposition}
\label{jsj acylindrical}
Let $M$ be a closed, oriented, connected, \emph{prime} 3-manifold, which is not geometric.
Let $\mathbf{Y}$ be the graph of groups associated to its JSJ decomposition. Then the splitting of $G=\pi_1(M)$
as the fundamental group of $\mathbf{Y}$ is acylindrical.
\end{proposition}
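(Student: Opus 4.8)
The plan is to show that the action of $G$ on the Bass--Serre tree $T$ of the JSJ graph of groups is $k$-acylindrical for some uniform $k$; concretely I expect $k=4$ (or a small explicit constant) to work. Recall that the vertex groups are $\pi_1(N_i)$ and the edge groups are the $\dbZ^2$'s coming from the JSJ tori, each embedded as a peripheral subgroup in the adjacent vertex groups. The key geometric input is the following \emph{non-degeneracy} property of the JSJ decomposition of a non-geometric prime $3$-manifold: no JSJ torus is isotopic to a fiber of a Seifert piece, and adjacent Seifert pieces are glued along boundary tori in such a way that the fibers do \emph{not} match up. Consequently, for each edge $e$ with endpoints $v,w$ and edge group $Z_e\cong\dbZ^2$, the subgroup $Z_e$ is \emph{malnormal up to finite index} inside each vertex group in the appropriate relative sense; more precisely, for a Seifert piece $G_v$ with fiber subgroup $\langle h\rangle$, the peripheral $\dbZ^2$'s intersect pairwise in at most $\langle h\rangle$, and $\langle h\rangle$ is not contained in the image of an adjacent edge group sitting in the next vertex.

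First I would set up the combinatorics: a path of length $k$ in $T$ projects to an edge-path in the graph $Y$, and its stabilizer is contained in the intersection $gZ_{e_1}g^{-1}\cap \cdots$ of conjugates of consecutive edge groups, read inside successive vertex groups. So it suffices to bound the length of an edge-path in $T$ that can have infinite pointwise stabilizer. Second, I would treat the \textbf{hyperbolic vertex groups}: if $G_v=\pi_1(N_v)$ with $N_v$ hyperbolic, then distinct peripheral $\dbZ^2$'s (the cusp subgroups) are malnormal, i.e. any two distinct conjugates intersect trivially, so the stabilizer of a path passing through $v$ along two different edges is trivial. Hence an infinitely-stabilized path can cross a hyperbolic vertex only by ``turning around'' on the same edge — which is excluded for reduced paths — so such a path stays within the sub-forest of $T$ spanned by Seifert vertices. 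Third, I would handle the \textbf{Seifert vertex groups with hyperbolic base}: here distinct peripheral $\dbZ^2$'s intersect exactly in the fiber subgroup $\langle h_v\rangle$ (infinite cyclic, by Lemma~\ref{orbifold SES} and the fact that $N_v$ is not modeled on $S^3$). So the stabilizer of a path crossing $v$ along two different edges is contained in $\langle h_v\rangle$ (up to finite index / conjugacy). Now comes the crucial point: at the \emph{next} vertex $w$ along the path, the image of the incoming edge group inside $G_w$ is a peripheral $\dbZ^2$ of $G_w$; since $M$ is not geometric, by Corollary~\ref{excluded-cases} the vertex $w$ is hyperbolic, Seifert over a hyperbolic base, or a copy of $K$, and in none of these does the fiber $h_v$ of the previous piece remain a fiber — the gluing does not match fibers (otherwise the two pieces would amalgamate or $M$ would be one of the geometric exceptions of Section~\ref{sec-twisted-bundles}). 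Hence $\langle h_v\rangle$ is not conjugate into a fiber subgroup of $w$, so $\langle h_v\rangle \cap Z_{e_{j+1}}$ (the next edge group) has infinite index in $\langle h_v\rangle$, i.e. is trivial. That forces the stabilizer of a reduced path of length $\ge 3$ (crossing $v$ then $w$) to be finite. The twisted $I$-bundle pieces $K$ are handled the same way using that their single boundary $\dbZ^2$ is of finite index in $\pi_1(K)\cong\dbZ\rtimes\dbZ$, and that a path cannot stay inside $K$-vertices by Corollary~\ref{excluded-cases} (every $K$ is attached to a type (i) or (ii) piece). Assembling these cases, any reduced path in $T$ of length $4$ has finite stabilizer, establishing acylindricity.

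The main obstacle I anticipate is making precise and justifying the ``fibers don't match'' statement across a JSJ torus — i.e. that when two Seifert pieces are glued, the fiber of one is not isotopic in the common boundary torus to the fiber of the other, \emph{and} that the fiber of a Seifert piece is never peripheral in an adjacent hyperbolic piece (where it would have to be a cusp element). The first is classical: if the fibers matched, the JSJ torus would not be a genuine JSJ torus (the union would be Seifert fibered), contradicting minimality of the JSJ decomposition — but one must be slightly careful about the twisted $I$-bundle $K$ and about the possibility that $M$ falls into the geometric exceptions treated in Proposition~\ref{torus-bundle-over-circle} and Proposition~\ref{flat-flat:case}, which is exactly why the hypothesis ``$M$ not geometric'' and Corollary~\ref{excluded-cases} are invoked. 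The second (fiber of a Seifert piece cannot be a cusp element of an adjacent hyperbolic piece) follows because the fiber is central in the Seifert vertex group, hence its image in the hyperbolic vertex group would be central in a $\dbZ^2$ cusp subgroup, which is fine — but the fiber also generates a normal infinite cyclic subgroup of the whole Seifert vertex group, and if it were conjugate into the hyperbolic side it would not play this central/normal role there in a way compatible with the amalgam; the clean argument is that in a hyperbolic $3$-manifold group, the peripheral $\dbZ^2$'s are malnormal, so two consecutive edges at a hyperbolic vertex already kill the stabilizer, and no fiber issue arises. I would state the needed peripheral-structure facts as a preliminary lemma (citing standard $3$-manifold references and the malnormality of cusps in relatively hyperbolic groups, cf.\ \cite{LO07}), then carry out the short case analysis on reduced paths of length $4$ as above.
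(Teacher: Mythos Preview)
Your overall strategy matches the paper's: analyze the stabilizer of a path in the Bass--Serre tree case by case, using malnormality of cusps for hyperbolic pieces, the fiber-subgroup intersection for Seifert pieces with hyperbolic base, and the ``fibers don't match'' principle coming from minimality of the JSJ decomposition. However, your treatment of the twisted $I$-bundle $K$ has a genuine gap, and the claimed constant $k=4$ is incorrect.

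The error is in the step ``Hence $\langle h_v\rangle$ is not conjugate into a fiber subgroup of $w$, so $\langle h_v\rangle \cap Z_{e_{j+1}}$ (the next edge group) has infinite index in $\langle h_v\rangle$, i.e.\ is trivial.'' When the next vertex $w$ is a copy of $K$, this deduction fails: since $\dbZ^2$ is \emph{normal} of index two in $\pi_1(K)$, the two edges incident to a $K$-vertex in $T$ have the \emph{same} stabilizer $Z_{e_j}=Z_{e_{j+1}}$. Thus $\langle h_v\rangle\subset Z_{e_j}=Z_{e_{j+1}}$ and passing through a $K$-vertex does not shrink the stabilizer at all. Concretely, if $N$ is Seifert fibered over a hyperbolic base and has a boundary torus glued to a copy of $K$, then a reduced path in $T$ of the form $\tilde N-\tilde K-\tilde N-\tilde K-\tilde N$ (length $4$) has stabilizer exactly the fiber subgroup of the middle $\tilde N$-vertex, which is infinite cyclic. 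So $k=4$ does not suffice.

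The paper uses $k=5$ precisely to handle this. The missing ingredient is a holonomy argument for $K$: the nontrivial coset of $\dbZ^2$ in $\pi_1(K)$ acts on $\dbZ^2$ by the Klein-bottle holonomy, which has exactly two invariant rank-one subgroups (the $\pm1$-eigenspaces). If the fiber direction of the adjacent piece $N$ coincided with one of these, the Seifert fibration of $N$ would extend over $N\cup_{T^2} K$, contradicting minimality of the JSJ. Hence the fiber subgroups coming from the two $\tilde N$-vertices flanking a $\tilde K$-vertex are \emph{distinct} rank-one subgroups of the common $\dbZ^2$ edge group, and therefore intersect trivially. That is what kills the stabilizer once the path has length $5$. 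Your proposed handling of $K$ (``handled the same way using that their single boundary $\dbZ^2$ is of finite index'') does not capture this; you need the holonomy to detect the twist.
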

\begin{proof}
Let $T$ be the Bass-Serre covering tree of $\mathbf{Y}$, and let $c$ be a
path of length $5$.  $G$ acts without inversion on $T$, so
elements that stabilize $c$ must in fact fix it, for otherwise they would invert 
the center edge of $c$. We will argue that the stabilizer of $c$ is trivial. This 
will show that the splitting satisfies the definition of acylindricity, with integer $k=5$.

Let $c$ have edges $\{\widetilde{y}_1,\ldots,\widetilde{y}_5\}$, and
let $o(\widetilde{y}_i)=\widetilde{P}_{i-1}, t(\widetilde{y}_i)=\widetilde{P}_i$
in $T$. Let $y_i=p(\widetilde{y}_i)\in\edge Y$ and
$P_i=p(\widetilde{P}_i)\in\vrt Y$ for each vertex and edge in $c$. 
Let $N_i$, $0\leq i \leq 5$, be the manifolds that correspond to each 
vertex $P_i$, with fundamental groups $G_i$.  Let $T_i$, $1\leq i \leq 5$, 
be the torus associated to each edge $y_i$, and denote by $Z_i$ the
stabilizer of $\tilde{y_i}$ (which is a conjugate in $G$ of the 
fundamental group of $T_i$).

Now suppose that one of the $N_i$ ($1\leq i \leq 4$) is hyperbolic. Then the 
stabilizer of $c$ is contained in $Z_{i-1}\cap Z_i$. The groups $Z_{i-1}$ and
$Z_i$ are stabilizers of two distinct points in the boundary at infinity of $\mathbb
H^3$. It follows that the group that fixes $c$ must be finite, hence trivial since 
the $G_i$ is torsion-free.

\vskip 5pt

So we now need to consider the case where {\it all} the $N_i$ ($1\leq i \leq 4$) 
are Seifert fibered. Recall that the $N_i$ have non-empty boundary, so there 
are only two possible cases for each of their base orbifold: either the base is 
hyperbolic, or it is Euclidean. In view of Corollary \ref{excluded-cases}) either 
$N_2$ or $N_3$ is Seifert fibered with hyperbolic base $2$-orbifold.

\vskip 5pt

Let us now briefly pause and focus on $N$ a Seifert fibered space, with hyperbolic 
base $2$-orbifold. Then by \cite[Theorem~1.2.2]{Mo05}, $G=\pi_1(N)$ acts on 
$\hyp\times\mbr$, with quotient the corresponding $N_i$. Notice an important 
feature of such Seifert fibered spaces -- they come equipped with a {\it canonical} 
Seifert fibered structure. Indeed, the circle fibers in $N_i$ always lift to copies of 
the $\mbr$ factor in the universal cover.

Each edge incident to the corresponding vertex $\tilde P$ has stabilizer a $\mbz ^2$ 
subgroup of $G$. Up to reparametrization, the $\mbz ^2$-action on the universal 
cover $\hyp \times \mbr$ is described as follows. The first coordinate acts by 
translation in the $\mbr$-factor, while the second coordinate acts by a parabolic 
isometry on the $\hyp$-factor. Noting that a pair of parabolic isometries that are 
centered at different points at infinity always intersect trivially, we conclude that the 
corresponding pair of edge stabilizers can only intersect in an infinite cyclic subgroup. 
Moreover, the axes of translation of this cyclic subgroup corresponds precisely to the 
fibers of the Seifert fibration on $N$.

\vskip 5pt

We now continue our proof. If both $N_2$, $N_3$ are Seifert fibered with hyperbolic 
base, then we claim that $Z_1\cap Z_2 \cap Z_3$ is trivial. By the discussion above, 
$Z_1 \cap Z_2$ is an infinite cyclic subgroup of $G_2$, generated by the Seifert fibers 
of $N_2$. Similarly, $Z_2 \cap Z_3$ is also an infinite cyclic subgroup of $G_3$, 
generated by the Seifert fibers of $N_3$. Consider the torus $T_2$ (with fundamental 
group $Z_2$) where $N_2$ and $N_3$ are glued together. This $2$-torus has two 
circle fibrations induced on it, depending on whether we view it as a subspace of 
$N_2$ or of $N_3$. The circle fibers induced by the $N_2$ fibration correspond 
to the subgroup $Z_1\cap Z_2$, while the circle fibers induced by the $N_3$ 
fibration correspond to the subgroup $Z_2 \cap Z_3$. If these two subgroups 
intersect non-trivially, then the two fibrations match on the common $2$-torus 
$T_2$, and we obtain a Seifert fibered structure on $N_2 \cup _{T_2} N_3$. 
But this contradicts the minimality of the JSJ decomposition. Thus the 
two fibrations on $T_2$ {\it cannot} match, and hence 
$(Z_1\cap Z_2) \cap (Z_2 \cap Z_3)$ is trivial. But this group is precisely the 
intersection of the stabilizers of the three consecutive edges 
$\tilde y_1, \tilde y_2, \tilde y_3$ in the path $c$. Since this intersection contains 
the stabilizer of $c$, we conclude that $c$ has trivial stabilizer.

\vskip 10pt

Finally, we are left with one remaining case: one of $N_2$, $N_3$ is Seifert fibered 
with hyperbolic base, while the other one is Seifert fibered with flat base. Without 
loss of generality, we assume that $N_2$ has hyperbolic base, while $N_3$ has 
flat base. Then as discussed above, we see that $N_3$ must coincide with $K$, 
the twisted interval bundle over the Klein bottle. In particular, $G_3=\pi_1(N_3)$ 
coincides with the fundamental group of the Klein bottle, and the single boundary 
torus $\partial N_3$ has fundamental group $Z_2 = Z_3$ which is the canonical 
index two $\mbz ^2$ subgroup in $G_3$. 

Let us briefly focus on the manifold $K$. The universal cover of $K$ is $\mbr ^2 \times [0,1]$. 
There are precisely two possible Seifert fibrations of $K$. Indeed, a Seifert fibration lifts to a
foliation of the universal cover $\mbr ^2 \times [0,1]$ by parallel straight lines. In order to 
descend to a well-defined foliation of $K$, the straight lines have to be invariant under the 
action of the $\pi_1(K)$, the fundamental group of the Klein bottle. Since $\pi_1(K)$ is 
crystallographic, we have a well-defined holonomy involution $h: \dbZ^2 \rightarrow \dbZ^2$, 
given by conjugating the normal index two subgroup $\dbZ^2\triangleleft \pi_1(K)$ by any element in 
$\pi_1(K) \setminus \dbZ^2$. 
This holonomy action leaves invariant precisely two cyclic subgroups of $\dbZ^2$,
corresponding to the $\pm 1$ eigenspaces of $h$. The foliations with slopes matching
the eigenspace of $h$ are precisely the ones which will descend to $K$.

\vskip 5pt

Continuing our proof, the canonical Seifert fibered structure on the piece $N_2$ induces 
foliations by straight lines on $\mbr ^2 \times \{0\} \subset \tilde N_3$. It also induces a 
foliation by straight lines on $\mbr ^2 \times \{1\}\subset \tilde N_3$. These two foliations 
are related: the foliation on $\mbr ^2 \times \{1\}$ can be obtained 
from the foliation on $\mbr ^2 \times \{0\}$, by applying the holonomy map $h$. 
There are now two possible cases: either these foliations have slope matching
an eigenspace of $h$, or they will not.

If the foliation matches the eigenspace of $h$, then putting the corresponding Seifert
structure on $N_3$, we obtain a globally defined Seifert structure on $N_2 \cup_{T_2} N_3$.
This contradicts the minimality condition in the JSJ splitting.

On the other hand, if the slope does not match an eigenspace, then from the action 
of the holonomy $h$, we see that the straight line foliations on the two sides 
$\mbr ^2 \times \{0\}$ and $\mbr ^2 \times \{1\}$ are by lines of different slope. But this 
means that, if we view the infinite cyclic groups $Z_1\cap Z_2$ and $Z_3 \cap Z_4$ as subgroups 
of $\dbZ^2 \triangleleft \pi_1(K) = G_3$, they act by translations in distinct directions, and hence the intersection 
$(Z_1\cap Z_2) \cap (Z_3 \cap Z_4)$ is trivial. Since the stabilizer of $c$ is contained 
within this intersection, it is also trivial. This was the last remaining case, and hence 
completes the proof that the splitting is acylindrical.
\end{proof}


\section{Bredon cohomology computation}\label{sec:Bredon}

From our work in Sections \ref{sec-Seifert-fibered} and \ref{sec-hyperbolic}, we know the geometric dimension for fundamental groups of closed geometric
$3$-manifolds. In this section, we focus on non-geometric prime $3$-manifolds.

\begin{proposition}\label{prime-cases}
Let $M$ be a closed oriented prime $3$-manifold, with $\Gamma = \pi_1(M)$, and assume that $M$ is not geometric. Then $\gdvcgamma=3$.
\end{proposition}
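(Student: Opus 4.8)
The plan is to combine Theorem~\ref{reduction:seifert:hyp} with a Bredon cohomology computation. By Corollary~\ref{excluded-cases}, each piece $N_i$ in the JSJ decomposition of $M$ is either hyperbolic, Seifert fibered over a hyperbolic base orbifold, or a copy of $K$; and by Propositions~\ref{hyperbolic:case}, \ref{prop:bdry:hyp:orb}, and \ref{prop:bdry:flat:orb} the corresponding vertex group $G_i=\pi_1(N_i)$ satisfies $\gdvc(G_i)=3$ and admits an explicit $3$-dimensional model $X_i$ for $\evc G_i$. Substituting this into Theorem~\ref{reduction:seifert:hyp} yields $3\le\gdvc(\Gamma)\le 4$. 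Since $\gdvc(\Gamma)\ge 3$, Lemma~\ref{properties:gd}(2) gives $\gdvc(\Gamma)=\cdvc(\Gamma)$, so it suffices to prove $\cdvc(\Gamma)\le 3$, i.e.\ that $\hvc^4(\Gamma;F)=0$ for every $\calO_{\vcyc}\Gamma$-module $F$.

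I would then feed the acylindricity of the JSJ splitting (Proposition~\ref{jsj acylindrical}) into two Mayer--Vietoris arguments. First, Proposition~\ref{push-out:acyl} realizes $\evc\Gamma$ as a $\Gamma$-push-out of $E_\calF\Gamma$ (here $\calF$ is the family of virtually cyclic subgroups conjugate into a JSJ vertex group) with copies of $\Gamma\times_H\{*\}$ along copies of $\Gamma\times_H\dbE$; since $\dbE$ is $1$-dimensional and $\{*\}$ is $0$-dimensional, the terms $\hvc^{3}(\Gamma\times_H\dbE;F)$, $\hvc^{4}(\Gamma\times_H\dbE;F)$ and $\hvc^{4}(\Gamma\times_H\{*\};F)$ all vanish, so the associated sequence collapses to an isomorphism $\hvc^4(\evc\Gamma;F)\cong\hvc^4(E_\calF\Gamma;F)$. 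Second, Proposition~\ref{bass serre construction} realizes $E_\calF\Gamma$ as the geometric realization of a graph of spaces over the Bass--Serre tree, with vertex spaces the $X_i$ and edge spaces of the form $\evc(\dbZ^2)\times[0,1]$. The Mayer--Vietoris sequence of this graph of spaces, together with $\hvc^4(\Gamma\times_{G_i}\evc G_i;F)=0$ (each $X_i$ is $3$-dimensional) and $\hvc^4(\Gamma\times_{\dbZ^2}\evc(\dbZ^2);F)=0$, identifies $\hvc^4(E_\calF\Gamma;F)$ with the cokernel of the restriction map $\bigoplus_{\text{vertices}}\hvc^3(\Gamma\times_{G_i}\evc G_i;F)\longrightarrow\bigoplus_{\text{edges}}\hvc^3(\Gamma\times_{\dbZ^2}\evc(\dbZ^2);F)$.

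Thus everything reduces to showing this restriction map is surjective. The local input I would establish is that for each $N_i$ which is hyperbolic or Seifert fibered over a hyperbolic base, and each boundary-torus subgroup $\dbZ^2\le G_i$, the restriction $\hvc^3(\evc G_i;F)\to\hvc^3(\evc(\dbZ^2);F)$ is surjective. This comes from the Mayer--Vietoris sequence of the push-out defining $\evc G_i$ in Propositions~\ref{hyperbolic:case} and \ref{prop:bdry:hyp:orb}: the boundary tori occur among the attached subgroups (see Remark~\ref{remark:bdry:hyp:orb}), and the relevant obstruction term vanishes because $\hfin^3(G_i;\dbZ)\cong H^3(N_i;\dbZ)=0$ for a compact $3$-manifold $N_i$ with nonempty boundary. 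By Corollary~\ref{excluded-cases} every JSJ torus is incident to at least one such piece, and the copies of $K$ occur only as leaves of the JSJ graph, never adjacent to one another; a bookkeeping argument over the tree-like JSJ graph (rooting it, orienting the edges, and solving for the vertex classes one edge at a time) then promotes these local surjectivities to surjectivity of the global restriction map. Combined with the lower bound, this gives $\gdvc(\Gamma)=\cdvc(\Gamma)=3$.

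I expect this last step to be the main obstacle. Since $\cdvc(\dbZ^2)=3$, the groups $\hvc^3(\evc(\dbZ^2);F)$ are genuinely nonzero, so the argument hinges entirely on showing every such class is restricted from an adjacent vertex group; the interaction with the $K$-leaves — whose restriction map to their boundary torus need not itself be surjective — is the delicate point, forcing one to exploit that a copy of $K$ is always glued to a hyperbolic or Seifert-over-hyperbolic-base piece.
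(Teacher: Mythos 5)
Your overall strategy coincides with the paper's: use acylindricity of the JSJ splitting and the two push-out constructions to reduce everything to surjectivity of the restriction map $\bigoplus_i\hvc^3(G_i;F)\to\bigoplus_{y}\hvc^3(\dbZ^2;F)$, prove a local surjectivity statement for the hyperbolic and Seifert-fibered-over-hyperbolic pieces from their defining push-outs, and dispose of the copies of $K$ by observing that each one is glued to a piece of the other two types. Your repackaging of the first step (collapsing the Mayer--Vietoris sequence of the push-out in Proposition \ref{push-out:acyl} to get $\hvc^4(\Gamma;F)\cong H^4_{\calF}(E_{\calF}\Gamma;F)$, rather than first producing a $3$-dimensional model for $E_{\calF}\Gamma$ via $\cd_{\calF}(\Gamma)=3$ and then attaching $2$-cells) is harmless and essentially equivalent.

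There is, however, a gap in your justification of the key local step. The computation must run for an arbitrary coefficient module $F$, not for $\underline{\dbZ}$; the vanishing of $\hfin^3(G_i;\underline{\dbZ})\cong H^3(N_i;\dbZ)$ says nothing about general $F$, and in any case this is not the relevant obstruction term. In the Mayer--Vietoris sequence of the push-out defining $\evc G_i$, surjectivity of $\hvc^3(G_i;F)\to\hfin^3(G_i;F)\oplus\bigoplus_{H\in\calH}\hvc^3(H;F)$ is controlled by the next term $\bigoplus_{H\in\calH}\hfin^3(H;F)$, which vanishes because every $H\in\calH$ is either virtually cyclic or $2$-crystallographic, hence satisfies $\cdfin(H)\le 2$. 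This is the argument the paper uses, and it yields the stronger statement that $\hvc^3(G_i;F)$ surjects onto $\bigoplus_{H\in\calH''}\hvc^3(H;F)$ for \emph{every} subset $\calH''$ of the boundary subgroups simultaneously. You actually need this stronger form: your global bookkeeping is phrased for a ``tree-like'' JSJ graph, but the JSJ graph of groups may contain loops and cycles, so rooting it and solving one edge at a time does not literally make sense. With simultaneous surjectivity the repair is immediate: for each edge choose an incident vertex of type (i) or (ii), which exists by Corollary \ref{excluded-cases}, and realize any prescribed class on that edge by a class on that vertex group restricting to zero on all of its other boundary subgroups.
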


\begin{proof}
Since $M$ is not geometric, Theorem \ref{reduction:seifert:hyp} gives us the inequality $3\leq \gdvcgamma \leq 4$, so we just need to rule out 
$\gdvcgamma = 4$.

Let $\mathbf{Y}$ be the graph of groups associated to the JSJ decomposition of $M$, so that $\pi_1(\mathbf{Y}) = \pi_1(M)=\Gamma$. 
Then the $G_i$ are the groups associated to the vertices $p_i\in\vrt Y$. Proposition \ref{jsj acylindrical} tells us that $\mathbf{Y}$ is an acylindrical 
graph of groups.  Letting $\fmly$ be the family of virtually cyclic subgroups of $\Gamma$ that are conjugate into one of the $G_i$, Proposition
\ref{push-out:acyl} tells us that an $\evc \Gamma$ can be obtained by attaching $2$-cells to a model for $E_{\calF}\Gamma$. Thus our proposition
would follow immediately from the 

\vskip 10pt

\noindent {\bf Claim:} There exists a $3$-dimensional model for $E_{\calF}\Gamma$.

\vskip 10pt

Unfortunately the na\"ive model for $E_{\calF}\Gamma$ described in the proof of Proposition \ref{bass serre construction} is $4$-dimensional.
In order to show that there exists a $3$-dimensional model, we will instead show that the fourth Bredon cohomology $H^4_{\fmly}(\Gamma;F)$ vanishes 
for all coefficient modules $F\in\textrm{Mod-}\mc{O}_{\fmly}\Gamma$. This implies that the Bredon cohomological dimension $\text{cd}_{\calF} \Gamma =3$,
which implies the existence of the desired $3$-dimensional model (see Lemma \ref{properties:gd}). 

To show $H^4_{\fmly}(\Gamma;F)=0$, we make use of the graph of spaces model described in Proposition \ref{bass serre construction}. 
Chose an orientation $A$ of the edges of the finite graph $\mathbf{Y}$. Then by \cite[Remark 4.2]{MP02}, the graph of spaces gives rise to the
long exact sequence
\begin{equation}\label{chiswell:les}
\ldots \stackrel{\partial^*}{\to} \bigoplus_{\vrt Y}\hvc^3(G_i;F)
\stackrel{\alpha^*}{\to} \bigoplus_{{y}\in A}\hvc^3(\mbz^2;F)
\stackrel{\iota^*}{\to} H^4_{\fmly}(\Gamma;F) \stackrel{\partial^*}{\to}
\bigoplus_{\vrt Y}\hvc^4(G_i;F)\to\ldots.
\end{equation}
We know that all the pieces in the JSJ decomposition satisfy $\gdvc(G_i)=3$, so by Lemma \ref{properties:gd}, we also have $\cdvc(G_i)=3$. 
This forces $\hvc^4(G_i;F)=0$ for all the $G_i$. Thus in order to prove that $H^4_{\fmly}(\Gamma;F)$ is trivial, it suffices to prove that  
\begin{equation}\label{surj}
\alpha^*:\bigoplus_{\vrt Y}\hvc^3(G_i;F)\to\bigoplus_{{y}\in A}\hvc^3(\mbz^2;F)
\end{equation}
is surjective. Given an oriented edge $y\in A$, and one of the endpoints $p_i \in \vrt Y$, the corresponding morphism $\hvc^3(G_i;F) \rightarrow \hvc^3(\mbz^2;F)$
is induced by the inclusion $\mbz^2 \hookrightarrow G_i$, corresponding to one of the boundary tori $T^2 \hookrightarrow N_i$. We know from Corollary
\ref{excluded-cases} that each piece $N_i$ from the JSJ decomposition has nonempty boundary and is either (i) hyperbolic, (ii) Seifert fibered with
hyperbolic base $2$-orbifold, or (iii) a copy of $K$, the twisted $I$-bundle over the Klein bottle. Let us analyze the morphism in the first two cases.
\vskip 10pt

In case (i), $N_i$ is hyperbolic with non-empty boundary and fundamental group $G_i$. Then Proposition \ref{hyperbolic:case} gives the following 
Mayer-Vietoris exact sequence
\[\ldots\to \hvc^3(G_i;F)\to
\bigg(\bigoplus_{H\in\mc{H}}\hvc^3(H;F)\bigg)\oplus
\hfin^3(G_i;F)\to\bigoplus_{H\in\mc{H}}\hfin^3(H;F)\to\ldots\]
Since each $H\in \mc{H}$ is either $2$-crystallographic or virtually cyclic, we always have that $\hfin^3(H;F)=0$. Also, among the elements of $\calH$ 
we have the fundamental groups of the boundary tori of $G_i$, lets call $\calH' \subset \calH$ the subset consisting of those copies of $\dbZ^2$. Then for every 
$\calH''\subseteq \calH'$ we obtain from the Mayer-Vietoris above that the map
\[
\hvc^3(G_i;F)\to
\bigoplus_{H\in\mc{H}''}\hvc^3(H;F)
\]
induced by the inclusion of subgroups is surjective. 

\vskip 10pt

\begin{table}
\begin{tabular}{| p{8cm} |c|p{1.5cm}|}
\hline 
Type of geometric piece & Analyzed in & $\gdvcgamma$\tabularnewline
\hline 
\hline 

Compact Seifert fibered piece with bad base orbifold or good base orbifold modeled on $S^2$ & Impossible, Lemma \ref{impossible:compact:case} &- \tabularnewline
\hline 

Compact Seifert fibered piece with base orbifold modeled on $\dbH^2$ & Proposition \ref{prop:bdry:hyp:orb} & $3$ \tabularnewline
\hline 

Compact Seifert fibered piece with base orbifold modeled on $\dbE^2$ & Proposition \ref{prop:bdry:flat:orb} & $3$ \tabularnewline
\hline 

Hyperbolic piece & Proposition \ref{hyperbolic:case}  & $3$ \tabularnewline
\hline 

\end{tabular}
\caption{Virtually cyclic dimension of pieces in the JSJ decomposition.}
\label{tabla2}
\end{table}

Next, let us analyze case (ii), where $N_i$ is Seifert fibered with $B$ modeled on $\dbH^2$ and fundamental group $G_i$. 
Then using the push-out from Proposition \ref{prop:bdry:hyp:orb}, an argument similar to the one in the hyperbolic case, shows 
that the map 
\[
\hvc^3(G_i;F)\to
\bigoplus_{H\in\mc{H}''}\hvc^3(H;F)
\]
is again surjective for every subset $\calH''\subset\calH$, where again $\calH$ is the set of $\dbZ^2$ subgroups in $G_i$ corresponding to boundary components
of $N_i$. 

Note that in case (iii), where $N_i$ is the twisted interval bundle over the Klein bottle, it is not clear how to prove 
a surjectivity statement as above, as we do not have a push-out construction for the corresponding classifying space. 

\vskip 10pt

We now return to the proof of the Proposition. We needed to show that the morphism $\alpha^*$ in Equation (\ref{surj}) is surjective. The only possible difficulty lies
from the $\dbZ^2$ subgroups that arise as boundaries of geometric pieces homeomorphic to $K$ (see last paragraph). But from Corollary \ref{excluded-cases}, 
every geometric piece $N_i$ that is homeomorphic to $K$ gets attached to another geometric piece 
$N_j$ that is {\bf not} homeomorphic to $K$. In particular, the corresponding morphism $\hvc^3(G_j;F)\to \hvc^3(H;F)$ is surjective (where $H\cong \dbZ^2$ is
the subgroup corresponding to the $2$-torus $\partial N_i$). It is now easy to see that the morphism in Equation (\ref{surj}) is in fact surjective, completing our proof.
\end{proof}


\section{Proof of the main theorem}\label{sec-main-thm}

We are now ready to establish our main theorem.

\begin{table}
\begin{tabular}{| p{8cm} |c|p{1.5cm}|p{1.5cm}|}
\hline 
Type of closed $3$-manifold & Analyzed in & $\gdvcgamma$ & Geometry\tabularnewline 
\hline 
\hline 
Seifert fibered with bad base orbifold or good base orbifold modeled on $S^2$  & Proposition \ref{bad:orb:case}  & 0 & $S^3$ or $S^2\times \dbE$ \tabularnewline 
\hline 
Seifert fibered manifold with base orbifold modeled on $\dbH^2$ & Proposition \ref{empty:bdary:hyp:orb} & 3 & $\dbH^2$ or $\pslt$ \tabularnewline
\hline 
 Seifert fibered manifold with base orbifold modeled on $\dbE^2$& Proposition \ref{empty:bdary:flat:orb} & $4$ or  $3$  & $\dbE^3$ or $\nil$ resp. \tabularnewline
\hline 

Hyperbolic manifold & Proposition \ref{hyperbolic:case}  & $3$ & $\dbH^3$ \tabularnewline
\hline 

\end{tabular}
\caption{Virtually cyclic dimension of closed geometric manifolds.}
\label{tabla}
\end{table}

\begin{proof}[Proof of Theorem \ref{main:theorem}]
First, we verify that $\gdvcgamma\leq 4$ for every closed orientable $3$-manifold $M$. In view of Theorem \ref{reduction:irreducible}, it is sufficient to consider
the case where $N$ is prime. We have two cases depending on whether $N$ is geometric or not. 
If $N$ is a closed geometric $3$-manifold, then $\gdvcgamma\leq 4$ always holds -- see Table \ref{tabla} for details. On the other hand, if $N$ is a prime 
$3$-manifold which is not geometric, then Proposition \ref{prime-cases} shows that $\gdvcgamma=3$. 

Having established that $\gdvcgamma\leq 4$ for all closed orientable $3$-manifolds, let us now analyze the possibilities for $\gdvcgamma$, and establish statements (1)-(4) in
our main theorem. 

\hskip 5pt

\noindent {\bf Statement (1).}  For every group $\Gamma$ and every family $\calF$ of subgroups we have that $\gd_\calF(\Gamma)=0$ if and only if $\Gamma\in \calF$. Statement (1) follows as a particular case.

\hskip 5pt

\noindent {\bf Statement (2).} Assume that $\gdvcgamma=2$. Then it follows from Theorem \ref{reduction:irreducible} that all the components in the prime 
decomposition have virtually cyclic geometric dimension at most $2$. Proposition \ref{prime-cases}
then tells us that all the prime factors of $M$ are geometric. Looking at Table \ref{tabla}, we see all the components in the 
prime decomposition must be modeled on 
$S^2\times \dbE$ or $S^3$, and hence have virtually cyclic fundamental group. Thus $\Gamma$ is a free product of virtually cyclic groups. 

Conversely, if $\Gamma$ is a free product of virtually cyclic groups, then we have an acylindrical splitting of $\Gamma$. By Corollary \ref{vcgd:acyl} we 
obtain $\gdvcgamma\leq 2$. To obtain a lower bound we just have to observe that $\Gamma$ always contains a free group on two generators. Since such 
groups have virtually cyclic dimension equal to $2$, we obtain $2 \leq \gdvcgamma$.

\hskip 5pt

\noindent {\bf Statement (3).} If $\Gamma$ contains a $\dbZ^3$ subgroup, applying
Lemma \ref{properties:gd} gives the lower bound $4 = \gdvc (\dbZ ^3) \leq\gdvcgamma$, which forces $\gdvcgamma=4$. Conversely, in view of Theorem 
\ref{reduction:irreducible}, if $\gdvcgamma=4$ then one of the components arising in the prime decomposition of $M$ must have virtually cyclic dimension $=4$.
But for prime manifolds, we know that having virtually cyclic dimension $=4$ implies that the manifold is geometric (by Proposition \ref{prime-cases}), and
looking at Table \ref{tabla} we see the manifold must be crystallographic. This implies its fundamental group (itself a subgroup of $\Gamma$) contains a 
$\dbZ ^3$ subgroup.

\hskip 5pt

\noindent {\bf Statement (4).} To complete the proof, let us now assume that $\Gamma$ is not virtually cyclic, nor a free product of virtually cyclic groups, 
nor has a $\dbZ ^3$ subgroup. We will prove that $\gdvcgamma=3$. Let $M=P_1 \# \cdots \#P_k$ with corresponding free splitting 
$\Gamma:= \Gamma_1 * \cdots * \Gamma_k$.  In view of Theorem \ref{reduction:irreducible}, it suffices to show that all the 
prime manifolds $P_i$ in the decomposition satisfy $\gdvc (\Gamma _i)\leq 3$, and that at least one 
$P_i$ has $\gdvc (\Gamma_i)=3$. 

First note that none of the $P_i$ can be crystallographic, since $\Gamma$ does not contain any $\dbZ ^3$ subgroup.
From Table \ref{tabla}, we see that if $P_i$ is prime, geometric, but not crystallographic, then $\gdvc (\Gamma _i)\leq 3$. 
On the other hand, if $P_i$ is not geometric, then from Proposition \ref{prime-cases} it must have $\gdvc (\Gamma _i)= 3$.
So we see that indeed all $\gdvc (\Gamma _i)\leq 3$.

Finally, if {\it none} of the $P_i$ have $\gdvc (\Gamma _i)= 3$, then they must all satisfy $\gdvc (\Gamma _i)\leq 2$. Combining the results in Table \ref{tabla}
and Proposition \ref{prime-cases}, this can only happen if {\it all} the $P_i$ satisfy $\gdvc (\Gamma _i)=0$. From Table \ref{tabla},
this only occurs if all the $\Gamma_i$ are virtually cyclic, forcing $\Gamma$ to either be virtually cyclic (if there is only one prime factor) or to be a free product
of virtually cyclic groups. But both of these statements are contradictions. We conclude that there must exist a $P_i$ with $\gdvc (\Gamma _i)= 3$. Applying
Theorem \ref{reduction:irreducible} gives us $\gdvcgamma=3$, and completes the proof of our main theorem.
\end{proof}

\begin{remark}
Looking through the proof of Theorem \ref{main:theorem} above, we see that
the exact same arguments also establish the geometric description given in Corollary \ref{main:corollary}.
\end{remark}

\bibliographystyle{alpha} 
\bibliography{myblib}
\end{document}